\newtheorem{Theorem}{Theorem}[section]
\newtheorem{Proposition}[Theorem]{Proposition}
\newtheorem{Lemma}[Theorem]{Lemma}
\newtheorem{Corollary}[Theorem]{Corollary}
\theoremstyle{definition}
\newtheorem{Definition}[Theorem]{Definition}
\theoremstyle{remark}
\newtheorem{Remark}[Theorem]{Remark}
\newcommand{\real}{\mathbb{R}}
\newcommand{\complex}{\mathbb{C}}
\newcommand{\PO}{\mathrm{PO}}
\newcommand{\po}{c}
\newcommand{\Tr}{\mathrm{Tr}}
\newcommand{\cH}{\mathcal{H}}
\newcommand{\loc}{\mathrm{loc}}
\newcommand{\cL}{\mathcal{L}}
\newcommand{\Det}{\mathrm{Det}}
\newcommand{\att}{\mathcal{A}}
\newcommand{\cU}{\mathcal{U}}
\newcommand{\disk}{\mathbb{D}}
\newcommand{\hf}{\hat{f}}
\newcommand{\hF}{\hat{F}}
\newcommand{\hL}{\hat{L}}
\newcommand{\hg}{\hat{g}}
\newcommand{\der}{\bm{d}}
\newcommand{\hder}{\hat{\der}}
\newcommand{\derd}{\mathcal{D}}
\newcommand{\bbJ}{\mathbb{J}}
\newcommand{\bs}{\mathbf{s}}
\newcommand{\bh}{\mathbf{h}}
\newcommand{\bsigma}{\boldsymbol{\sigma}}
\newcommand{\frg}{\mathfrak{g}}
\newcommand{\cJ}{\mathcal{J}}
\newcommand{\frL}{\mathfrak{L}}
\newcommand{\weight}{\mathcal{W}}
\begin{document}

\title[Cohomological theory]{On cohomological theory \\of dynamical zeta functions}
\author[M.~Tsujii]{Masato TSUJII}
\address{Department of Mathematics, Kyushu University, Fukuoka, 819-0395}
\email{tsujii@math.kyushu-u.ac.jp}
\keywords{dynamical zeta functions, transfer operator, leaf-wise cohomology}
\thanks{This work was supported by JSPS Grant-in-Aid for Scientific Research (B) 15H03627}
\date{\today}

\begin{abstract}
We discuss about the conjectural cohomological theory of dynamical zeta functions \cite{Juhl, Guillemin, Patterson} in the  case of general Anosov flows. Our aim is to provide a functional-analytic framework that enables us to justify  the basic part of the theory rigorously. We show that the zeros and poles of a class of dynamical zeta functions, including the semi-classical (or Gutzwiller-Voros) zeta functions, are interpreted as eigenvalues of the generators of  some transfer operators acting on the leaf-wise de Rham cohomology spaces of the unstable foliation. 
\end{abstract}

\maketitle

\section{Introduction}
For an Anosov diffeomorphism $f:M\to M$ on a closed manifold $M$, we can count its periodic points by using Lefschetz fixed point formula \cite{MR1503373}: the number of the periodic points with period $p$ is given as the alternating sum of the traces of the actions of $f^p$ on the cohomology spaces of $M$. As a consequence, the Artin-Mazur zeta function for an Anosov diffeomorphism is a rational function and  its singularities, {\it i.e.} zeros and poles, enjoy the symmetry given by Poincar\'e duality. (See  \cite{Smale}.) 
Roughly speaking, the cohomological theory of dynamical zeta functions \emph{for flows} is a quest for the counterparts of such facts in the case of Anosov flows $f^t:M\to M$. Since the actions of the flow on the cohomology spaces is trivial, this is a quite non-trivial problem from the beginning.  
One possible idea that we can find in the literature (\cite{Juhl, Guillemin, Patterson}) is to consider the actions of the flow $f^t$ on the leaf-wise de Rham cohomology spaces $\cH^m(\mathcal{F}_u)$ of its unstable foliation $\mathcal{F}_u$. Indeed, by some heuristic arguments, we can find a conjectural relation
\[
\sum_{m=0}^{\dim \mathcal{F}_u}(-1)^m \Tr ((f^{-t})^*:\cH^m(\mathcal{F}^u)\circlearrowleft))=
\sum_{\po\in \PO}\sum_{n=1}^{\infty}\frac{|c|\cdot \delta(t-n|\po|)}{|\det(\mathrm{Id}-(P^s_c)^{-n})|}
\]
where the left-hand side is an alternating sum of the traces of the actions of the flow on the leaf-wise cohomology spaces while  $\PO$  on the right-hand side denotes the set of prime periodic orbits and $|\po|$ and $P^s_{\po}$ denote respectively the period and the transversal Jacobian matrix of $\po\in \PO$ restricted to the stable subspace. (We will give precise definitions in the next section.) 
Then, in parallel to the case of Anosov diffeomorphism, this leads to another conjectural statement that the following dynamical zeta function
\begin{align*}
\zeta(s)&=\exp\left(\int \sum_{\po\in \PO}\sum_{n=1}^{\infty}
\frac{1}{t}\frac{|c|\cdot e^{-st}}{|\det(\mathrm{Id}-(P^s_c)^{-n})|}\cdot \delta(t-n|\po|) dt \right)\\
&=\exp\left( \sum_{n=1}^{\infty}\sum_{\po\in \PO}
\frac{e^{-sn|\po|}}{n\cdot |\det(\mathrm{Id}-(P^s_c)^{-n})|} \right),
\end{align*}
extends  to a meromorphic function  on  the complex plane and its singularities correspond to the eigenvalues of the generators of the push-forward action $(f^{-t})^*:\cH^m(\mathcal{F}_u)\circlearrowleft$. Further it is expected under some additional assumptions that the major part of the zeros of $\zeta(s)$ comes from the action of the flow on the bottom cohomology space $\cH^0(\mathcal{F}_u)$ and other singularities are only minor.  We refer the introduction part of the book \cite{Juhl} for the circle of ideas related to the cohomological theory of dynamical zeta functions. 

The geodesic flows on hyperbolic closed surfaces are motivating examples for the cohomology theory mentioned above. The dynamical zeta function $\zeta(s)$ above for such flows coincide with the Selberg zeta functions \cite{McKean72} up to shift of the variable $s$ by $1$ and, for the latter, we have precise information on analytic properties and the structure behind them, by Selberg's trace formula and the method using representation theory of $SL(2,\real)$. Hence we are able to check validity of the cohomological theory mentioned above. (See \cite{Guillemin, Patterson}.) Further we can extend the results to the case of geodesic flows on more general classes of homogeneous spaces. (See the book \cite{Juhl} and the references there-in.) However the methods used in such arguments depend crucially on the homogeneous property of the spaces and will not be valid in more general cases, such as those of the geodesic flows on closed manifolds with negative variable curvature. 
 
The purpose of the present paper is to propose a more direct approach to the cohomological theory of dynamical zeta function for flow so that it is applicable to the general setting of Anosov flows. Recently there are a few developments in functional-analytic methods on smooth hyperbolic dynamical systems (\cite{Blank02, BaladiTsujii07, GL08, ButLiv2007,  FaureSjostrand11, GLP12, DZ16}). In particular, we now understand that the generators of the transfer operators associated to Anosov flows exhibit discrete spectra (called Ruelle-Pollicott resonances) in general \cite{ButLiv2007,FaureSjostrand11} and that the asymptotic distributions of the periods of periodic orbits are controlled by such discrete eigenvalues of related transfer operators \cite{DZ16}. It should be a natural idea to put the problems around the cohomological theory of dynamical zeta function in the light of such developments. 

The main results of this paper, Theorem \ref{th:main}, is presented in the next section and provides a functional-analytic framework that enables us to justify the basic part of the cohomological theory mentioned above in the general case of $C^\infty$ Anosov flows. 
The main technical problem behind (the proof of) the theorem is that the unstable foliation is not necessarily smooth and, consequently, nor are the objects related to the cohomological theory, such as transfer operators and exterior derivative operators along unstable leafs. Indeed this is the problem that appears first when we try to justify the cohomological theory in more general cases of (non-homogeneous) Anosov flows.  
We will resolve (or avoid) the difficulties by constructing non-smooth embedding of the flow $f^t:M\to M$ into another smooth flow $\tilde{f}^t:\tilde{M}\to \tilde{M}$ on an extended space $\tilde{M}$ and showing that the problematic non-smooth transfer operators on $M$ are realized as a restriction of smooth transfer operators associated to $\tilde{f}^t$. This kind of method is originated in the paper \cite{CviVat93} of Cvitanovi\'c and Vattay and used more recently in \cite{GL08,MR3648976} to resolve the problems caused by non-smoothness of the unstable foliation. In this paper, we  enhance  the method in order to deal with the exterior derivative operators along unstable leafs as well as transfer operators. The detailed explanation will be given in Section \ref{sec:ext}. 

The cohomological theory of dynamical zeta function has been restricted to the case of homogeneous flows in the previous papers \cite{Juhl, Guillemin, Patterson} and remained in a conjectural level in more general cases.
From this viewpoint, the significance of the result in this paper should be clear. 
But, admittedly, in order to be convinced of its usefulness, we have to study the action of the transfer operators on the leaf-wise cohomology spaces by using the proposed framework. We will give a brief discussion about this at the end of Section~\ref{sec:def} and present an interesting observation by comparing our main result with that of the previous paper \cite{MR3648976}. Also we will present a conjectural picture that we expect in Remark \ref{rem:conj}. But a full-scale study is deferred to future works.


\section{Definitions and The main result}\label{sec:def}
In this section, we present the main results of this paper after giving basic definitions and related preliminary arguments. 
\subsection{Anosov flow}
Let $(M,g)$ be a $C^\infty$ closed Riemann manifold. We consider a $C^\infty$ flow $f^t:M\to M$ generated by a vector field $v_f:M\to TM$. 
\begin{Definition}[Anosov flow]\label{def:Af}
A $C^\infty$ flow $f^t:M\to M$ is said to be  an \emph{Anosov flow} if there exists a $Df^t$-invariant continuous splitting 
\begin{equation}\label{eq:hyp_decomp}
TM = E_0 \oplus E_s \oplus E_u
\end{equation}
of the tangent bundle $TM$ such that $E_0$ is the one-dimensional subbundle spanned by the (non-vanishing) generating vector field $v_f$ and the other two subbundles $E_s$ and $E_u$ are uniformly expanding and contracting respectively in the sense that, for some constant $C>0$ and $\chi>0$, we have 
\begin{align*}
&\|Df^t|_{E_s}\|_g\le \exp(-\chi t +C) \quad \text{for $t\ge 0$}
\intertext{and}
&\|Df^{-t}|_{E_u}\|_g\le \exp(-\chi t +C) \quad \text{for $t\ge 0$.}
\end{align*}
\end{Definition}
The constant $\chi>0$ will be called the hyperbolicity exponent of the Anosov flow $f^t$ (though there is some freedom in its choice). The subbundles $E_s$ and $E_u$ are called the stable and unstable subbundle respectively. We set $d_s=\dim E_s$ and $d_u=\dim E_u$ so that
\[
d:=\dim M=d_s+d_u+1.
\] 
Recall the stable manifold theorem \cite{PS1, PS2} and the following  definition. 
\begin{Definition}
The integral manifold of the subbundle $E_u$ (resp. $E_0\oplus E_u$) passing through a point $p\in M$ is  called the unstable manifold (resp. center-unstable manifold of $p$ and denoted by $W^u(p)$ (resp.  $W^{cu}(p)$). These are $C^\infty$ immersed submanifolds in~$M$. A small open topological disks around $p$ in $W^u(p)$ (resp. $W^{cu}(p)$) is called the local unstable  manifold (resp. local center-unstable manifold). The foliation that the  unstable manifolds form is  called \emph{the  unstable  foliation} and denoted by  $\mathcal{F}_u$. This foliation is not necessarily smooth, though each of the leafs is smooth.  
\end{Definition}

We are most interested in the following subclass of Anosov flows.  
\begin{Definition}[Contact Anosov flow] An Anosov flow $f^t:M\to M$ is said to be a contact Anosov flow if the manifold $M$ is odd dimensional, say $\dim M=2\ell+1$ for some integer $\ell\ge 1$, and if it preserves a contact form $\alpha$, which is by definition a differential $1$-form on $M$ satisfying the complete non-integrablity condition, {\it i.e.} $\alpha\wedge (d\alpha)^{\ell}$ vanishes nowhere.  The geodesic flow on a closed Riemann manifold with negative sectional curvature is a prominent example of contact Anosov flow.
\end{Definition}

 If $f^t:M\to M$ is a contact Anosov flow, 
the contact form $\alpha$ induces a symplectic form $\omega=d\alpha$ on the null space $\ker \alpha$ of $\alpha$, which is also preserved by the flow. Since the flow $f^t$ is contracting and expanding on $E_s$ and $E_u$ respectively,  $E_s$ and $E_u$ are Lagrangian subspaces of $\ker \alpha$ with respect to the symplectic form $\omega$. In particular, we have $
\ker \alpha= E_s\oplus E_u$ and $\dim E_s=\dim E_u=\ell$ in the case of contact Anosov flow.

\subsection{Periodic orbits}
We henceforth assume that $f^t:M\to M$ is a $C^\infty$ Anosov flow. 
We write $\PO$ for the set of prime periodic orbits of the flow $f^t$. 
For $c\in \PO$, we write $|c|$ for its prime period. Since we have $f^{|\po|}(p)=p$ for each point $p$ on $\po$, 
the action of $Df^{|\po|}$ induces a linear transform on the tangent space $T_pM$ at $p$. This linear transform preserves the splitting 
\[
T_pM =  E_0(p) \oplus E_u(p) \oplus E_s(p).
\]
By definition, the linear transform $Df_p^{|\po|}$ is  identical, contracting and expanding on $E_0(p)$, $E_s(p)$ and $E_u(p)$ respectively. The restriction of   $Df_p^{|\po|}$ to $E_u(p)\oplus E_s(p)$ is called the transversal Jacobian matrix of $\po$ and denoted by
\[
P_\po:E_u(p)\oplus E_s(p) \to E_u(p)\oplus E_s(p).
\]
Its restriction to the subspaces  $E_s(p)$ and $E_u(p)$ are denoted respectively by 
\[
P^s_{\po}:=P_\po|_{E_s(p)}:E_s(p)\to E_s(p)\quad\text{and}\quad
P^u_{\po}:=P_\po|_{E_u(p)}:E_u(p)\to E_u(p).
\]
\begin{Remark} \label{rem:sim}
The equivalence class with respect to similarity (or conjugacy) of the transversal Jacobian matrix $P_\po$  does not depend on the choice of the point $p$ on $c$. The same is true for the linear transforms $P^s_\po$ and $P^u_\po$. Thereby we drop dependence on $p$ from the notation.
\end{Remark} 

\subsection{The semi-classical zeta functions}\label{ss:scz}
In the case of  a contact Anosov flow $f^t:M\to M$, the semi-classical (or Gutzwiller-Voros) zeta function is a function of one complex variable $s$ defined formally by 
\begin{equation}\label{eq:def_sz}
\zeta_{sc}(s)=\exp\left(
 -\sum_{\po\in \PO} \sum_{n=1}^\infty
 \frac{e^{-sn|\po|}}
 {n\cdot \sqrt{|\det(\mathrm{Id}-(P_\po)^{-n})|}}
 \right).
\end{equation}
The infinite sum on the right-hand side converges absolutely if the real part of $s$ is sufficiently large. Hence this definition makes sense and  gives a non-vanishing holomorphic function on the region $\Re(s)>C$ for some $C>0$. As we will explain later, this holomorphic function extends to a meromorphic function on the complex plane $\complex$.  

We extend the definition \eqref{eq:def_sz} to the general setting of  Anosov flows. Let $f^t:M\to M$ be a $C^\infty$ Anosov flow and let $\pi_G:G\to M$ be the Grassmann bundle that consists of $d_u$-dimensional subspace of $TM$. Then the flow $f^t$ induces a $C^\infty$ flow on $G$,
\begin{equation}\label{eq:fG}
\hf^t_G:G\to G, \quad \hf^t_G(x,\sigma)=(f^t(x), Df^t(\sigma))
\end{equation}
where $\sigma$ denotes a $d_u$-dimensional subspace in $T_xM$. Note that the  unstable subbundle $E_u$  gives a section  
\begin{equation}\label{eq:iotau}
\iota_u:M\to G, \quad \iota_u(x)=E_u(x)\in G,
\end{equation}
which is equivariant for the actions of the flow in the sense that the diagram 
\begin{equation}\label{cd:ftg2}
\begin{CD}
G @>{\hf^t_G}>> G\\
@A{\iota_u}AA @A{\iota_u}AA\\
M@>{f^t}>> M
\end{CD}
\end{equation} 
commutes. We consider a $C^\infty$ line bundle $\pi_{\hL}:\hL\to G$ over $G$  and   a $C^\infty$ one-parameter group of  vector bundle maps of $\hL$ over the flow $\hf_G^t$,
\begin{equation}\label{eq:hg}
\hg^t:\hL\to \hL\quad\text{such that } \pi_{\hL}\circ \hg^t=\hf_G^t\circ  \pi_{\hL}.
\end{equation}
We write $\pi_L:L=\iota_u^* \hL\to M$ for the continuous line bundle obtained as the pull-back of $\hL$ by the section $\iota_u$. Then $\hg^t$ induces the continuous one-parameter  group of vector bundle maps $
g^t:L\to L$ which makes the following diagram commutes:
\[
\begin{CD}
\hL @>{\hg^t}>>\hL\\
@VV{(\iota_u)^*}V @VV{(\iota_u)^*}V\\
L @>{g^t}>> L
\end{CD}
\]
where $(\iota_u)^*$ denotes the pull-back operation by the section $\iota_u$. 

\begin{Definition}
In the setting as above, the generalized semi-classical zeta function $\zeta(s)$ is the function of one complex variable $s$ defined formally by 
\begin{equation}\label{eq:def_gsz}
\zeta(s)=\exp\left(
 -\sum_{\po\in \PO} \sum_{n=1}^\infty\frac{e^{-sn|\po|}\cdot g_\po^n}{n\cdot |\det(\mathrm{Id}-(P^s_\po)^{-n})|}\right)
\end{equation}
where $g_{\po}$ is the real number which represents the linear transform that $g^{|\po|}$ induces on the fiber of the line bundle $L$ at each point on  $\po$.
\end{Definition}
Let us see that the semi-classical zeta function \eqref{eq:def_sz} is a special case of the definition above.  
Suppose that $\hg^t:\hL\to \hL$ is the natural action of $f^t$ on the $1/2$-density line bundle of the tautological vector bundle $V_G$ on $G$. Then the line bundle $L:=\iota_u^* \hL$ is the $1/2$-density line bundle of $E_u$ and $g^t:L\to L$ is the natural push-forward action of $f^t$ on $L$. If we suppose further that $f^t:M\to M$ is a contact Anosov flow, we see that  $P^u_\po$ is similar to ${}^t(P^s_{\po})^{-1}$ and, by algebraic computation, that the function $\zeta(s)$ in \eqref{eq:def_gsz} coincides with the semi-classical (or Gutzwiller-Voros) zeta function $\zeta_{sc}(s)$ in \eqref{eq:def_sz}.

\subsection{Transfer operator and dynamical Fredholm determinant}
Let $\pi_V:V\to M$ be a continuous  finite dimensional complex vector bundle over $M$. (Note that we do not assume smoothness of $V$.) We consider a continuous one-parameter semi-group of vector bundle isomorphisms
\[
F^{t}_V: V\to V\quad\text{for $t\ge 0$}
\]  
over the Anosov flow $f^{t}$. A simple example of such semi-group is the restriction of $Df^t$ to $V=E_u$.  But we consider a little more involved cases. 

By definition, the vector bundle map $F^{t}_V$ induces a linear isomorphism from the fiber $\pi_{V}^{-1}(x)$ at a point $x\in M$ to the fiber $\pi_{V}^{-1}(f(x))$ at  $f(x)\in M$. In particular, if $p$ is a point on a periodic orbit $\po\in \PO$ and if $t=|\po|$, it induces a linear isomorphism 
\[
V_{\po}:=F^{|\po|}_V:\pi_{V}^{-1}(p)\to \pi_{V}^{-1}(p). 
\]
Note that the conjugacy class of $V_{\po}$ does not depend on the choice of the point $p$ on $\po$, so that we drop the point $p$ from the notation.

The continuous one-parameter group $F^{t}_V$ naturally induces the push-forward action on the space $\Gamma^{0}(V)$ of continuous sections of $V$: 
\begin{equation}\label{eq:cLtV}
\cL^t_V: \Gamma^{0}(V)\to \Gamma^{0}(V),\quad \cL^t_Vu(x)= F^{t}_{V}(u(f^{-t}(x))).
\end{equation}
This kind of push-forward operators acting on the sections of vector bundles will be called  (vector-valued) transfer operators. 

Below we would like to consider the traces of the transfer operators $\cL^t_V$. But it is usually not possible to apply the general argument on the trace of linear operators to  $\cL^t_V$. We thus take a roundabout way instead. 
The flat trace (or Atiyah-Bott-Guillmann trace) $\mathrm{Tr}^\flat \cL^t_V$ of 
the transfer operator $\cL^t_V$ is defined as the integration of its Schwartz kernel on the diagonal set. It is well-defined as a distribution with respect to the variable $t$ and, by computation, we find 
\[
\mathrm{Tr}^\flat \cL^t_V= \sum_{\po\in \PO} \sum_{n=1}^\infty 
\frac{|\po|\cdot \mathrm{Tr}\, V_{\po}^n}{|\det(\mathrm{Id}-P_\po^{-n})|}\cdot \delta(t-n|\po|).
\]
\begin{Remark} For the definition and computation of the flat trace $\mathrm{Tr}^\flat \cL^t_V$, we refer \cite[Lemma B1]{Dyatlov-Zworski16}. 
As far as the argument in this paper concerns, 
one can take the expression above as the definition of the flat trace $\mathrm{Tr}^\flat \cL^t_V$.
\end{Remark}

We define the dynamical Fredholm determinant of $\cL^t_V$ by
\begin{align}\label{eq:dynFD}
d(s)=d(s; \cL^t_V)&=\exp\left(-\int_{+0}^\infty \frac{e^{-st}}{t}\cdot \mathrm{Tr}^\flat \cL^t_V dt \right) \\
&= \exp\left(-\sum_{\po\in \PO} \sum_{n=1}^\infty 
\frac{e^{-sn|\po|}\cdot \mathrm{Tr}\, V_{\po}^n}{n\cdot |\det(\mathrm{Id}-P_\po^{-n})|} \right)\notag
\end{align}
where the lower bound $+0$ of  the integration denotes some positive real number smaller than the minimum of the periods of periodic orbits. 
\begin{Remark}\label{rem:heu}
By heuristic argument confusing the flat trace with the usual trace and supposing that  the generator of $ \cL^t_V$ has discrete eigenvalues $\{\rho_i\}_{i=1}^\infty$ and also that $\Re(s)\gg 0$, we surmise  that 
\begin{align*}
(\log d(s))'&=\int^\infty_0 e^{-st}\cdot\mathrm{Tr}^\flat \cL^t_V dt= \int^\infty_0 \sum_{i=1}^\infty e^{-(s-\rho_i)t}dt
\\
&=\sum_{i=1}^\infty(s-\rho_i)^{-1}=\left(\log \prod_{i=1}^\infty (s-\rho_i)\right)'.
\end{align*}
Thus the zeros of $d(s)$ is expected to coincide with the discrete eigenvalues of the generator of $ \cL^t_V$. 
\end{Remark}

\subsection{The transfer operators $\cL^t_k$}
We next introduce  a few concrete settings for the vector bundles and semi-groups of vector bundle maps discussed in the last subsection. 
Recall the line bundle $L$ and the semi-group of vector bundle maps $g^t:L\to L$ from Subsection \ref{ss:scz}. For $0\le k\le d_u$, we consider the continuous vector bundle
\begin{equation}\label{eq:Vk}
V_k= L\otimes (E_u^*)^{\wedge k}
\end{equation}
and the semi-group $F_k^{t}:V_k\to V_k$ of vector bundle maps on it defined by  
\[
F^{t}_{k}(l \otimes \omega)=  g^t(l)\otimes (Df^{-t})^*(\omega) \quad \text{for $l \otimes \omega\in V_k=L\otimes (E_u^*)^{\wedge k}$}.
\]
It induces the continuous semi-group of transfer operators
\begin{equation}\label{eq:cLtk}
\cL^t_{k}:\Gamma^{0}(V_k)\to \Gamma^{0}(V_k), \quad \cL^t_k\varphi(x)= F^{t}_{k}(\varphi(f^{-t}(x))).
\end{equation}
The dynamical Fredholm determinant $d_k(s)$ of $\cL^t_k$ is then  defined by 
\begin{equation}\label{eq:dynfd}
d_k(s)=\exp\left(
 -\sum_{\po\in \PO} \sum_{n=1}^\infty\frac{e^{-sn|\po|}\cdot g_\po^n \cdot  \mathrm{Tr}( (P_\po^u)^{-n})^{\wedge k}}{n\cdot |\det(\mathrm{Id}-P_\po^{-n})|}\right).
\end{equation}
Hence, by the algebraic relation\footnote{Notice that the absolute value of the eigenvalues of $P^u_c$ are smaller than $1$.}
\[
|\det(\mathrm{Id}-(P_\po^u)^{-n})|=\det(\mathrm{Id}-(P_\po^u)^{-n})=\sum_{k=0}^{d_u} (-1)^k\cdot  \mathrm{Tr}( (P_\po^u)^{-n})^{\wedge k},
\] 
we find that the generalized semi-classical zeta function $\zeta(s)$  in \eqref{eq:def_gsz} is expressed as the alternating product of $d_k(s)$: 
\begin{equation}\label{eq:alt_prod}
\zeta(s)=\prod_{k=0}^{\infty}d_k(s)^{(-1)^k}.
\end{equation}

\subsection{A remark on smoothness of the unstable foliation $\mathcal{F}_u$}\label{ss:remwu}
Before proceeding further, we put a remark about the smoothness of the unstable foliation  $\mathcal{F}_u$. 
As we have noted already, each leaf of the unstable foliation $\mathcal{F}_u$ is $C^\infty$,  though the foliation $\mathcal{F}_u$ itself if not necessarily smooth. Actually a little more is true. The following observations are not completely obvious but clear from the construction of the local unstable manifolds\cite{PS2}.

First observe that we can take a continuous map
\[
\mathbf{W}^{cu}:M\to C^\infty(\disk^{d_u+1}, M)
\] 
so that the image of $\mathbf{W}^{cu}(p)\in C^\infty(\disk^{d_u+1}, M)$ is a local center-unstable manifold of $p\in M$. 
This implies in particular that  $W^{cu}_{\loc}(p)$ for $p\in M$ is uniformly bounded in $C^\infty$ sense. Further we can take a continuous map
\[
\mathbf{T}_k:M\to C^{\infty}(\disk^{d_u+1}\times \real^{d_k}, V_k)\quad\text{for $0\le k\le d_u$}
\]
so that  $\mathbf{T}_k(p)\in C^{\infty}(\disk^{d_u}\times \real^{d_k}, V_k)$ is a $C^\infty$ vector bundle map from the trivial bundle $\disk^{d_u+1}\times \real^{d_k}$ to $V_k$ over the map $\mathbf{W}^{cu}(p)$. 
 
For $0\le k\le d_u$, we define
 $\Gamma^\infty_u(V_k)\subset\Gamma^0(V_k)$ as the set of continuous sections of $V_k$ whose pull-back by $\mathbf{T}_k(p)$ is $C^\infty$ for each $p\in M$ and depends on $p$ continuously in the $C^\infty$ topology. We suppose that it is equipped with the weakest topology so that, for each $p\in M$,  the pull-back of a section $u\in \Gamma^\infty_{cu}(V_k)$ by $\mathbf{T}_k(p)$ depends on $u$ continuously in $C^\infty$ sense. 
Then the operator $\cL^t_k$ in \eqref{eq:cLtk} restricts to a continuous operator
\begin{equation}\label{eq:cLtk2}
\cL^t_k:\Gamma^\infty_{cu}(V_k)\to \Gamma^\infty_{cu}(V_k).
\end{equation}

\begin{Remark}\label{rm:gammauM}
As a variation of the definition above, we define $\Gamma^\infty_{cu}(M)\subset \Gamma^0(M)$ as $\Gamma^\infty_{cu}(V_k)$ in the case where $V_k$ is the trivial line bundle.  
\end{Remark}

\subsection{Exterior derivative along the unstable leafs}\label{ss:extder}
Next we observe that the line bundle $\pi_L:L\to M$ restricted to each unstable manifold $W^u(q)$ is equipped with a unique $C^\infty$ flat connection that is invariant with respect to the action of $g^t$. Though this fact is rather well known, we reproduce the related argument here because the construction of that flat connection is important in our argument. 
First of all, note that  the section $\iota_u$ restricted to an unstable manifold $W^u(q)$ is a $C^\infty$ map into the Grassmann bundle $G$. Therefore the vector bundle $L$ restricted to each unstable manifold $W^u(q)$ is  $C^\infty$ and so is  the action of the vector bundle maps $g^t$ on it. 

Suppose that $p$ and $p'$ are two points on an unstable manifold $W^u(q)$ and that  $l$ and $l'$ are two non-zero elements in $L$ over $p$ and $p'$ respectively. Since $p$ and $p'$ belong to $W^u(q)$, the distance between $f^{-t}(p)$ and $f^{-t}(p')$ converges to $0$ exponentially fast as $t\to +\infty$ and  the ratio
$\|g^{-t}(l')\|/\|g^{-t}(l)\|$ 
converges to a non-zero value as $t\to +\infty$. 
We define the flat connection $H$ on $L$ restricted to $W^u(q)$
so that $l$ and $l'$ are a parallel transport of each other if and only if
\[
\lim_{t\to \infty} \mathrm{dist}\left(\frac{g^{-t}(l')}{\|g^{-t}(l)\|}, \frac{g^{-t}(l)}{\|g^{-t}(l)\|}\right) =0
\]
where $\mathrm{dist}(\cdot,\cdot)$ denotes a distance on $L$ compatible with its topology. This definition does not depend on the choice of the distance $\mathrm{dist}(\cdot,\cdot)$ and gives a $C^\infty$ flat connection on $L$ along each of the unstable leafs. We call it \emph{the dynamical flat connection along unstable leafs}.  We omit  the proof of existence and uniqueness of the flat connection $H$ above since it is rather standard. (See the remark below.) 
By definition, the connection $H$ is invariant with respect to the action of $g^t$ in the sense that $g^t$ carries the flat connection along $W^u(p)$ to that along $W^u(f^t(p))$. 
Note that the connection $H$ is not necessarily smooth  in the directions transversal to the unstable leafs.

\begin{Remark} One can construct the dynamical connection $H$ by a variant of the graph transform method. For the construction,  we need to assume that $L$ is one dimensional. The parallel argument will not be true in general for the case where $L$ is a higher dimensional vector bundle. Still we can generalize the argument in this paper to the case where $L$ is a higher dimensional vector bundle if we assume existence of a flat connection on it corresponding to the dynamical flat connection. For instance, one can consider the case where $L$ is a $C^\infty$ vector bundle over $M$ equipped with a $C^\infty$ flat connection and $g^t:L\to L$ is the (unique) one-parameter group of $C^\infty$ vector bundle maps over $f^t$ that preserves the flat connection. 
The results obtained will be   ``twisted" versions of those in this paper.  
\end{Remark}

By virtue of the dynamically defined connection $H$, we may trivialize the line bundle $L$ along the unstable leafs and  define the exterior derivative operator along the unstable leafs
\begin{equation}\label{def:deltau}
\der^u=\der^u_k:\Gamma^\infty_{cu}(V_k)\to \Gamma^\infty_{cu}(V_{k+1}),\quad k=0,1,2,\dots, d_u-1.
\end{equation}
Since $
\der^u_{k+1}\circ \der^u_k =0$, we have the cochain complex 
\[
\begin{CD}
\Gamma_{cu}^\infty(V_0) @>{ \der^u_0}>> \Gamma_{cu}^\infty(V_1) @>{\der^u_1}>>\cdots @>{\der^u_{d_u-1}}>> \Gamma_{cu}^\infty(V_{d_u}).
\end{CD}
\]
From the  invariance of the dynamical flat connection, the transfer operators $\cL^t_{k}$ in \eqref{eq:cLtk2} induce the cochain maps
\begin{equation}\label{eq:cochain}
\begin{CD}
\Gamma_{cu}^\infty(V_0) @>{ \der^u_0}>> \Gamma_{cu}^\infty(V_1) @>{\der^u_1}>>\cdots @>{\der^u_{d_u-1}}>> \Gamma_{cu}^\infty(V_{d_u})  \\
 @V{\cL^t_0}VV @V{\cL^t_1}VV  @. @V{\cL^t_{d_u}}VV
\\
\Gamma_{cu}^\infty(V_0) @>{\der^u_0}>> \Gamma_{cu}^\infty(V_1) @>{\der^u_1}>>\cdots @>{\der^u_{d_u-1}}>> \Gamma_{cu}^\infty(V_{d_u})  
\end{CD}
\end{equation}
This diagram of operators is at the core of the cohomological theory that we are interested in. It implies that, if some section $u$ in $\Gamma_{cu}^\infty(V_k)$ is an eigenfunction of $\cL^t_k$ (or its generator) for a certain eigenvalue, its image $\der^u_k(u)$ by the exterior derivative operator $\der^u_k$ will be the eigenfunction of $\cL^t_{k+1}$ (or its generator) for the same eigenvalue provided that if it does not vanish. This explains cancellations between the zeros of $d_k(s)$ in the alternative product \eqref{eq:alt_prod}. Further we see that the singularities of $\zeta(s)$ that survive such cancellations are related to the spectrum of the generators of the transfer operators acting on the cohomology spaces.  However a technical problem here is how we can set up appropriate Hilbert (or Banach) spaces as the completions of the spaces $\Gamma_{cu}^\infty(V_k)$ with respect to some norms so that the operators $\cL^t_k$ and $\der^u_k$ extend to bounded operators between them and, in addition, that the generators of $\cL^t_k$ on them exhibit discrete spectra. This problem is far from trivial because the operators $\cL^t_k$ and $\der^u_k$ are not smooth. Actually we already have a solution \cite{MR3648976} when we consider solely the transfer operators $\cL^t_k$, which uses the Grassmann extension of the flow. (See also \cite{GL08}.) However it is not clear whether the exterior derivative operators $\der^u_k$ extend to bounded operators between the spaces that appeared in such a solution. This is the point that we address in this paper. 

\subsection{Main result}
Now we present the main result. 
\begin{Theorem}\label{th:main}
For arbitrarily large $C>0$, there exist Hilbert spaces 
\[
\Lambda_k=\Lambda_k(C)\quad \text{for $0\le k\le d_u$}
\]
that are obtained as completions of the spaces $\Gamma^\infty_{cu}(V_k)$ in the diagram \eqref{eq:cochain} with respect to some norms, such that the following hold true:
\begin{enumerate}
\item The semi-group of transfer operators $\cL^t_k$ extends to a strongly continuous semi-group of bounded operators $
\cL^t_k:\Lambda_k\to \Lambda_{k}$
and the spectral set of its generator $A_k$  in the region $\Re(s)>-C$ consists of  discrete eigenvalues with finite multiplicity.  
\item The operator $\der^u_k$ extends boundedly to $
\der^u_k:\Lambda_k\to \Lambda_{k+1}$.
\end{enumerate}
In particular the diagram \eqref{eq:cochain} extends to that of bounded operators
\[
\begin{CD}
\Lambda_0 @>{ \der^u_0}>> \Lambda_1 @>{\der^u_1}>>\cdots @>{\der^u_{d_u-2}}>> \Lambda_{d_u-1} @>{\der^u_{d_u-1}}>> \Lambda_{d_u} \\
 @V{\cL^t_0}VV @V{\cL^t_1}VV  @. @V{\cL^t_{d_u-1}}VV@V{\cL^t_{d_u}}VV
\\
\Lambda_0 @>{ \der^u_0}>> \Lambda_1 @>{\der^u_1}>>\cdots @>{\der^u_{d_u-2}}>> \Lambda_{d_u-1} @>{\der^u_{d_u-1}}>> \Lambda_{d_u}
\end{CD}
\]
Further the dynamical Fredholm determinant ${d}_k(s)$ extends to a holomorphic function on the complex plane $\complex$ and its zeros coincide with the discrete eigenvalues of the generator $A_k$ on the region $\Re(s)>-C$ up to multiplicity. 
\end{Theorem}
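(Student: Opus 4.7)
The plan is to construct the Hilbert spaces $\Lambda_k$ by means of the smooth extension device alluded to in the introduction, in the spirit of \cite{CviVat93, GL08, MR3648976}, but enhanced to accommodate the exterior derivative operators $\der^u_k$ simultaneously. Concretely, I would first build a $C^\infty$ extension $\tilde{f}^t:\tilde{M}\to \tilde{M}$ together with a continuous (but not smooth) embedding $\iota:M\hookrightarrow \tilde{M}$ covered by continuous vector bundle embeddings $\iota^{(k)}:V_k\hookrightarrow \tilde{V}_k$ into $C^\infty$ vector bundles $\tilde{V}_k$ on $\tilde{M}$, equivariant for smooth semi-group actions $\tilde{F}^t_k$. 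The extension must be designed so that (a) the image $\iota(M)$ is a $\tilde{f}^t$-invariant center-unstable lamination whose leaves are $C^\infty$, (b) the restriction of the smooth transfer operator $\tilde{\cL}^t_k$ on $\Gamma^\infty(\tilde{V}_k)$ to sections supported near $\iota(M)$ realizes $\cL^t_k$ via pull-back by $\iota^{(k)}$, and (c) a genuinely $C^\infty$ flat connection on $\tilde{L}$ along the unstable leaves of $\tilde{f}^t$ exists whose pull-back by $\iota$ coincides with the dynamical flat connection $H$ of Subsection \ref{ss:extder}, so that $\der^u_k$ is the pull-back of a smooth exterior-derivative operator $\tilde{\der}^u_k$ on $\tilde{M}$.

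With this geometric setup in hand, the analytical core is the construction of anisotropic Hilbert spaces $\tilde{\Lambda}_k$ on $\tilde{M}$ on which $\tilde{\cL}^t_k$ becomes a strongly continuous semi-group with discrete spectrum in $\{\Re s>-C\}$. I would follow the micro-local scheme of Faure--Sjöstrand \cite{FaureSjostrand11}, defining $\tilde{\Lambda}_k$ via an order function $m(x,\xi)$ on $T^*\tilde{M}$ that is negative along the unstable co-direction and positive along the stable co-direction of $\tilde{f}^t$, with slope $|m|$ exceeding $C/\chi$. The escape-function argument then yields a quasi-compact operator whose essential spectral radius is below $e^{-Ct}$, so the generator $\tilde{A}_k$ has discrete spectrum of finite multiplicity on $\{\Re s>-C\}$. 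The spaces $\Lambda_k$ are defined as the closure in $\tilde{\Lambda}_k$ (or an appropriate quotient) of sections of the form $\iota_*(\psi\,u)$ with $u\in\Gamma^\infty_{cu}(V_k)$, using a partition of unity transverse to $\iota(M)$; the residual action of $\tilde{\cL}^t_k$ on this subspace yields the claimed semi-group on $\Lambda_k$.

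The delicate and decisive step, which is the main obstacle, is the simultaneous boundedness of $\der^u_k:\Lambda_k\to \Lambda_{k+1}$. On $\tilde{M}$ the operator $\tilde{\der}^u_k$ is a first-order differential operator \emph{along} the unstable foliation of $\tilde{f}^t$; its principal symbol involves only the co-directions tangent to the unstable leaves, i.e.\ precisely the directions where our order function $m$ is most negative. I would show that $\tilde{\der}^u_k$ is bounded from $\tilde{\Lambda}_k$ to $\tilde{\Lambda}_{k+1}$ by choosing the order function $m$ and the underlying pseudo-differential weight so that differentiation along unstable co-directions costs a bounded amount in the weighted norm—this amounts to requiring that $m$ has no positive component in the unstable cone and to tuning the transverse smoothness in $\tilde{\Lambda}_k$ to absorb the first-order loss. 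Combined with the commutation of $\iota^*$ with $\der$, this yields boundedness of $\der^u_k$ on $\Lambda_k$. The compatibility of the two sets of estimates (escape function for $\tilde{\cL}^t_k$, and uniform first-order bound for $\tilde{\der}^u_k$) is the crux of the argument.

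Finally, for the Fredholm determinant statement, I would proceed as in \cite{ButLiv2007,FaureSjostrand11,DZ16}: on $\Lambda_k$, for $\Re s$ large enough the resolvent $(s-A_k)^{-1}$ is well-defined and, after sufficiently many iterations, trace class; its flat trace computed on the extended spaces coincides (by invariance of the flat trace under the embedding and by the formula in Subsection~\ref{ss:scz}) with the trace-type sum defining $d_k(s)$, justifying the heuristic in Remark~\ref{rem:heu}. Quasi-compactness on $\{\Re s>-C\}$ then promotes $d_k(s)$ to a meromorphic—in fact entire, by the Fredholm determinant identity—function whose zero set on this region matches the eigenvalues of $A_k$ with multiplicity. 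Letting $C\to\infty$ through a sequence of spaces $\Lambda_k(C)$ gives the entire extension globally.
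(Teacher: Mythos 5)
Your overall strategy matches the paper's: embed $f^t$ into a smooth extension, build anisotropic Sobolev spaces à la Faure--Sj\"ostrand, restrict to the push-forward image of $M$, and accept a one-derivative loss in $\der^u_k$ by using a chain of spaces $\Lambda_k$ with decreasing smoothness orders. Two points, however, are underdeveloped to the point of being gaps.

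First, the extension $\tilde M$ is left unspecified, and this is not a detail one can defer. Your requirement (c) --- a $C^\infty$ flat connection on $\tilde L$ along the unstable leaves of $\tilde f^t$ whose pull-back is the dynamical connection $H$ --- is precisely what is hard to produce from a naive extension, because $H$ is only leafwise smooth and depends measurably on the leaf. The paper solves this by taking the extension to be the jet space $\bbJ^\ell$ of pairs (germ of $d_u$-dimensional hypersurface, germ of flat connection of $\frL_\oplus$ along its Grassmann lift). With this extension, the operator $\hder^u_k$ is computable from the finite jet data recorded in the fiber, and turns out to be a first-order differential operator with $C^\infty$ coefficients composed with a bundle projection, so that Proposition~\ref{pp:diffop} applies directly. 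Without some concrete device of this kind, your assertion that ``$\der^u_k$ is the pull-back of a smooth exterior-derivative operator $\tilde{\der}^u_k$ on $\tilde M$'' is unjustified.

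Second, and more seriously, your last paragraph glosses over exactly the point the paper flags as not being in the literature. The flat trace of the \emph{extended} transfer operator $\hat\cL^t_{k}$ on $\cU_\ell$ counts periodic orbits with weights that include the extra transversal factor $|\det(\mathrm{Id}-(P^\perp_\po)^{-m})|$ from the fiber directions of $\pi_B:B\to M$; it does \emph{not} compute the ``intrinsic'' determinant $d_k(s)$ of \eqref{eq:dynfd}. What the Grothendieck/Fredholm machinery gives you directly is that each of the transversal determinants $d_\nu(s)$ ($\nu$ a fiber form-degree) is entire, and then $d_k(s)$ is recovered only as the alternating product \eqref{eq:ds}, which is a priori merely meromorphic. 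Proving that this alternating product is actually entire, and that its zero divisor equals the spectrum of the generator $A_k$ on $\Lambda_k$, is the content of Proposition~\ref{pp:lf2}; the paper does this via a fiber-wise cohomology argument (the nested subspaces $\mathbf{D}_{\nu,q}$, a Poincar\'e-type lemma for $\derd_\nu,\derd^*_\nu$, and an exactness statement for the generalized eigenspaces $\Sigma_{\nu,q}(\sigma)$). Your appeal to ``the Fredholm determinant identity'' does not establish this cancellation, so the entirety of $d_k(s)$ and the identification of its zeros with eigenvalues of $A_k$ remain unproved in your proposal.
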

From the theorem above, the operator $\cL^t_k$ induces a strongly continuous semi-group of operators 
\begin{equation}\label{eq:bbLt}
\frL^t_k : \mathbb{H}^k\to \mathbb{H}^k
\end{equation}
on the (reduced) quotient space 
\[
\mathbb{H}^k=\ker(\der^u_k:\Lambda_k\to \Lambda_{k+1}) 
/ \mathrm{closure}(\mathrm{Image}\, \der^u_{k-1}:\Lambda_{k-1}\to \Lambda_{k})
\]
where the closure is taken in the Hilbert space $\Lambda_k$. The generator $\mathbb{A}_k$ of this semi-group has discrete spectrum in the region $\Re(s)>-C$. Hence we get the following conclusion which realizes (a basic part of) the  cohomology theory mentioned  in the introduction if we take the Hilbert spaces $\mathbb{H}^k$ as the leaf-wise cohomology spaces\footnote{These are different from the leaf-wise cohomology spaces considered in topology. Though each unstable leaf is usually dense in $M$, the space $\mathbb{H}_0$ will be infinite dimensional.} along the unstable foliation $\mathcal{F}_u$. 
\begin{Corollary}
The devisor of the generalized semi-classical zeta function \eqref{eq:def_gsz}  coincides with the alternating sum
\[
\sum_{k=0}^d (-1)^k \mathrm{Div}(\mathbb{A}_k)
\] 
of the divisor $\mathrm{Div}(\mathbb{A}_k)$ of the resolvent of the generator $\mathbb{A}_k$ in  the region $\Re(s)>-C$. (The constant $C$ is that in Theorem \ref{th:main}.)    
\end{Corollary}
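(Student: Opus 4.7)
The plan is to combine the factorization $\zeta(s)=\prod_{k=0}^{d_u} d_k(s)^{(-1)^k}$ of \eqref{eq:alt_prod} with Theorem \ref{th:main}, which identifies, in the region $\{\Re(s)>-C\}$, the zeros of $d_k(s)$ counted with multiplicity with the discrete eigenvalues of $A_k$. These two ingredients give $\mathrm{Div}(\zeta)=\sum_{k=0}^{d_u}(-1)^k\mathrm{Div}(A_k)$ in that region, so the corollary reduces to the identity
\[
\sum_{k=0}^{d_u} (-1)^k \mathrm{Div}(A_k) \;=\; \sum_{k=0}^{d_u} (-1)^k \mathrm{Div}(\mathbb{A}_k)\qquad \text{on } \{\Re(s)>-C\}.
\]
I would prove this pole by pole via a finite-dimensional Hodge-theoretic comparison between the generalized eigenspaces of $A_k$ on $\Lambda_k$ and of $\mathbb{A}_k$ on $\mathbb{H}^k$.

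Fix $\lambda$ with $\Re(\lambda)>-C$ and let $E_k(\lambda)\subset \Lambda_k$, $\tilde{E}_k(\lambda)\subset \mathbb{H}^k$ denote the (finite-dimensional) generalized $\lambda$-eigenspaces of $A_k$ and $\mathbb{A}_k$, with Riesz spectral projectors $P_k(\lambda)$ and $\tilde{P}_k(\lambda)$. Taking the Laplace transform of the semi-group intertwining $\der^u_k\cL^t_k=\cL^t_{k+1}\der^u_k$ from diagram \eqref{eq:cochain} and using the boundedness of $\der^u_k:\Lambda_k\to\Lambda_{k+1}$ given by Theorem \ref{th:main}, one obtains $\der^u_k(z-A_k)^{-1}=(z-A_{k+1})^{-1}\der^u_k$ on the common resolvent set; contour-integrating around $\lambda$ then yields $\der^u_k P_k(\lambda)=P_{k+1}(\lambda)\der^u_k$. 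Consequently $\der^u$ restricts to a differential on the finite-dimensional complex $E_\bullet(\lambda)$, and $P_k(\lambda)$ preserves $\ker\der^u_k$ and $\mathrm{Im}\,\der^u_{k-1}$, hence by continuity also $\overline{\mathrm{Im}\,\der^u_{k-1}}$. Thus $P_k(\lambda)$ descends to a bounded projector on $\mathbb{H}^k$ commuting with $\frL^t_k$ which, by uniqueness of Riesz projectors, coincides with $\tilde{P}_k(\lambda)$.

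The key technical step, and the one I expect to require the most care, is the identity
\[
E_k(\lambda)\cap \overline{\mathrm{Im}\,\der^u_{k-1}} \;=\; \der^u_{k-1}\bigl(E_{k-1}(\lambda)\bigr),
\]
which is what converts the reduced cohomology $\mathbb{H}^k$ (whose definition involves a closure) into something computable from the finite-dimensional complex $E_\bullet(\lambda)$. The inclusion $\supset$ is immediate. For $\subset$, take $u\in E_k(\lambda)$ with $u=\lim_n \der^u_{k-1} v_n$; applying the continuous projector $P_k(\lambda)$ gives $u=P_k(\lambda)u=\lim_n \der^u_{k-1} P_{k-1}(\lambda) v_n$, and since $\der^u_{k-1}(E_{k-1}(\lambda))$ is finite-dimensional, hence closed in $\Lambda_k$, the limit lies in it. Combined with the obvious $E_k(\lambda)\cap \ker \der^u_k = \ker(\der^u_k|_{E_k(\lambda)})$, this yields a canonical isomorphism $\tilde{E}_k(\lambda)=\mathrm{Image}\,\tilde{P}_k(\lambda)\cong H^k(E_\bullet(\lambda),\der^u)$.

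Finally, the standard Euler characteristic identity for finite-dimensional cochain complexes gives
\[
\sum_{k=0}^{d_u}(-1)^k \dim E_k(\lambda) \;=\; \sum_{k=0}^{d_u}(-1)^k \dim H^k\bigl(E_\bullet(\lambda)\bigr) \;=\; \sum_{k=0}^{d_u}(-1)^k \dim \tilde{E}_k(\lambda),
\]
and summing over all eigenvalues $\lambda$ in $\{\Re(s)>-C\}$ completes the proof. The whole argument hinges on the boundedness of $\der^u_k:\Lambda_k\to\Lambda_{k+1}$ supplied by Theorem \ref{th:main}; without this, neither the Laplace-transform derivation of $\der^u_k P_k(\lambda)=P_{k+1}(\lambda)\der^u_k$ nor the continuity step in the Hodge identity above would go through, and it is precisely this compatibility with the exterior derivative that distinguishes the present framework from the one used previously in \cite{MR3648976}.
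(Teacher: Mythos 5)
Your proof is correct and follows the same strategy as the paper: reduce to a finite-dimensional cochain complex via spectral projectors, then use the Euler characteristic identity for finite-dimensional complexes. The difference is organizational. The paper fixes a point $\rho_0=C+iy_0$, forms the spectral projector $\Pi_k$ of the resolvent $\mathcal{R}_k(\rho_0)$ for the part of its spectrum outside a disc, and shows $\der^u_k$ preserves $\ker\Pi_k$ by means of the decay characterization $\ker\Pi_k=\{u\mid (2C-\varepsilon)^m\|\mathcal{R}_k(\rho_0)^m u\|\to 0\}$, thereby passing to the quotient $\Lambda_k/\ker\Pi_k$ all at once; you work eigenvalue by eigenvalue with the Riesz projectors $P_k(\lambda)$ and derive the intertwining $\der^u_k P_k(\lambda)=P_{k+1}(\lambda)\der^u_k$ via Laplace transform and contour integration. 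The net content is identical. Your write-up is actually more explicit than the paper's at the one point that genuinely needs care, namely the closure-compatibility identity $E_k(\lambda)\cap\overline{\mathrm{Im}\,\der^u_{k-1}}=\der^u_{k-1}(E_{k-1}(\lambda))$, which is what lets one trade the reduced cohomology $\mathbb{H}^k$ for the cohomology of the finite-dimensional subcomplex $E_\bullet(\lambda)$; the paper's proof treats this step rather tersely (``it is now easy to conclude''). One small remark: the appeal to ``uniqueness of Riesz projectors'' to identify the descended projector $\bar{P}_k(\lambda)$ with $\tilde{P}_k(\lambda)$ should really be justified by observing that the resolvent $(z-A_k)^{-1}$ descends (since $\ker\der^u_k$ and $\overline{\mathrm{Im}\,\der^u_{k-1}}$ are preserved by the resolvent, commuting with $\der^u$) and that the contour integral of the descended resolvent is precisely $\tilde{P}_k(\lambda)$; uniqueness of bounded projectors commuting with the semi-group is not by itself a sufficient reason. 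This is a presentational point, not a gap.
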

\begin{proof}
We fix $y_0\in \real$ arbitrarily and set $\rho_0=C+iy_0$ where $C>0$ is a arbitrary large constant in Theorem \ref{th:main}.  We consider the resolvent 
\[
\mathcal{R}_k(s)=(s-A_k)^{-1} =\int_0^\infty e^{-st}\cL^t_k dt\;: \;\Lambda_k\to \Lambda_k
\]
of the generator $A_k$ of the semi-group $\cL^t_k:\Lambda_k\to \Lambda_k$. 
The spectral set $\Sigma_k(\rho_0)$ of the resolvent $\mathcal{R}_k(\rho_0)$ in the region $|z|>(2C)^{-1}$ consists of finitely many discrete eigenvalues with finite multiplicity and are in one-to-one correspondence to the  discrete eigenvalues of the generator $A_k$ in the region $|s-\rho_0|<2C$. Let $\Pi_k:\Lambda_k\to \Lambda_k$ be the (finite rank) spectral projector of $\mathcal{R}_k(\rho_0)$ for the spectral set $\Sigma_k(\rho_0)$. Since the kernel of this projection is characterized as 
\[
\ker \Pi_k=\{ u\in \Lambda_k\mid \lim_{m\to \infty}(2C-\varepsilon)^{m} \cdot \|\mathcal{R}_k(\rho_0)^m u\|_{\Lambda_k}= 0 \text{ for any $\varepsilon>0$}\},
\]
the operator $\der^u_k$ maps the subspace $\ker \Pi_k$ into $\ker \Pi_{k+1}$. 
Consequently we obtain the following commutative diagram
\[
\begin{CD}
\Lambda_{k-1}/\ker \Pi_{k-1}@>{\der^u_{k-1}}>> \Lambda_{k}/\ker \Pi_k@>{\der^u_{k}}>> \Lambda_{k+1}/\ker \Pi_{k+1}
\\@V{\mathcal{R}_{k-1}(\rho_0)}VV@V{\mathcal{R}_k(\rho_0)}VV @V{\mathcal{R}_{k+1}(\rho_0)}VV\\
\Lambda_{k-1}/\ker \Pi_{k-1}@>{\der^u_{k-1}}>> \Lambda_{k}/\ker \Pi_k@>{\der^u_{k}}>> \Lambda_{k+1}/\ker \Pi_{k+1}
\end{CD}
\]
where, by slight abuse of notation,  we write $\der^u_{k}$ and $\mathcal{R}_{k}(\rho_0)$ for the respective induced operators on the quotient spaces.  In this diagram, the spaces are all finite dimensional and we may replace $\mathcal{R}_{k}(\rho_0)$ by $\cL^t_k$ and also by $A_k$. Hence it is now easy to conclude the claim of the corollary because any eigenvalue $s$ of the generator $A_k$ with $\Re(s)>-C$ appears as an eigenvalue of $\mathcal{R}_k(\rho_0)$ contained in $\Sigma_k(\rho_0)$ if we set $y_0=\Im(s)$ in the argument above.
\end{proof}

\subsection{The case of contact Anosov flows}
We finish this section by discussing about the case of contact Anosov flows where we are most interested in. 
Let us suppose that $f^t:M\to M$ is a contact Anosov flow and the one parameter group of vector bundle maps $\hg^t:\hL\to \hL$ is that considered at the end of Subsection \ref{ss:scz}. Then, as we have noted,   the semi-classical zeta function $\zeta_{sc}(s)$ in \eqref{eq:def_sz} coincides with  the generalized semi-classical zeta function \eqref{eq:def_gsz} and therefore we can apply the results in the last subsection to $\zeta_{sc}(s)$. 

In this case, we can state  the main result of \cite{MR3648976} in a slightly improved manner as follows,  taking  advantage of the last statement of Theorem \ref{th:main}. (See Remark \ref{rem:previous} below.) 
\begin{Theorem}[Faure-Tsujii \cite{MR3648976}]\label{th:previous}
For $0\le k\le d_u$, the dynamical Fredholm determinant $d_k(s)$ in \eqref{eq:dynfd} extends to an entire function on the complex plane $\complex$. 
Let  $\varepsilon>0$ be an arbitrarily small positive number. Then the dynamical Fredholm determinant $d_0(s)$ for the case $k=0$ has infinitely many zeros in the neighborhood 
\[
\mathcal{R}_0=\{z\in \mathbb{C}\mid |\Re(s)|<\varepsilon\}
\]
of the imaginary axis and the remaining zeros are contained in   
\[
\mathcal{R}_1=\{s\in \mathbb{C} \mid \Re(s)<-\chi+\varepsilon\}
\]
but for finitely many exceptions, where $\chi>0$ is the hyperbolicity exponent of the flow $f^t$. (See Definition \ref{def:Af}.) The zeros of the dynamical Fredholm determinants $d_k(s)$ for $0 < k\le d$ are contained in the region $\mathcal{R}_1$ but for finitely many exceptions. 

Consequently, from the relation \eqref{eq:alt_prod}, the zeros of the semi-classical zeta function $\zeta_{sc}(s)$ is contained in the union $\mathcal{R}_0\cup \mathcal{R}_1$ while its poles are contained in $\mathcal{R}_1$, respectively with finitely many exceptions. Further $\mathcal{R}_0$ does contain infinitely many zeros of $\zeta_{sc}(s)$. 
\end{Theorem}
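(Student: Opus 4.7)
The plan is to deduce the theorem by combining two ingredients: (a) the spectral description of the generators of $\cL^t_k$ from \cite{MR3648976}, which locates the Ruelle--Pollicott resonances in a band structure forced by the contact property, and (b) the last assertion of Theorem \ref{th:main}, which identifies the zeros of $d_k(s)$ in any half-plane $\Re(s)>-C$ with the discrete eigenvalues of the generator $A_k$ counted with multiplicity. Once these are lined up, the alternating product \eqref{eq:alt_prod} delivers the statement about $\zeta_{sc}(s)$ almost mechanically.

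First I would recall the band structure from \cite{MR3648976}. The symplectic form preserved by a contact Anosov flow forces $P^u_\po$ to be conjugate to ${}^t(P^s_\po)^{-1}$, and weighting $\cL^t_0$ by the half-density line bundle on $E_u$ produces a perfect cancellation between the expansion and contraction factors in the ``principal'' band of resonances. The main result of \cite{MR3648976} is that the Ruelle--Pollicott spectrum of the generator of $\cL^t_0$ splits into a first band concentrated in the strip $\mathcal{R}_0$ and subsequent bands contained in $\mathcal{R}_1$, and that a Weyl-type lower bound guarantees that the first band carries infinitely many resonances. For $k\ge 1$, the additional twist by $(Df^{-t})^*|_{(E_u^*)^{\wedge k}}$ introduces an extra contracting factor of order $e^{-\chi t}$, so every band of the spectrum of the generator of $\cL^t_k$ is pushed into $\mathcal{R}_1$, with the possible exception of finitely many resonances.

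Next I would apply Theorem \ref{th:main}. Fixing $C$ arbitrary large with $C>\chi$, the last clause of that theorem gives that $d_k(s)$ extends to an entire function and that its zeros in $\Re(s)>-C$ are exactly the discrete eigenvalues of $A_k$ there with multiplicity. Combining this with the location of resonances recalled above yields: the zeros of $d_0(s)$ all lie in $\mathcal{R}_0\cup \mathcal{R}_1$ but for finitely many exceptions in $\{-C<\Re(s)\le -\chi+\varepsilon\}^c\setminus \mathcal{R}_0$, with infinitely many inside $\mathcal{R}_0$; and the zeros of $d_k(s)$ for $k\ge 1$ lie in $\mathcal{R}_1$ with finitely many exceptions. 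Inserting these locations into the alternating product \eqref{eq:alt_prod}, one sees that inside $\mathcal{R}_0$ only $d_0(s)$ contributes (the other $d_k(s)$ being zero-free there up to finitely many points), so infinitely many zeros of $d_0(s)$ survive as zeros of $\zeta_{sc}(s)$; outside $\mathcal{R}_0\cup \mathcal{R}_1$ all factors $d_k(s)$ are zero-free modulo finitely many points, hence $\zeta_{sc}(s)$ has only finitely many zeros and poles there; the poles of $\zeta_{sc}(s)$ come from the odd-index factors $d_k(s)$ and thus lie in $\mathcal{R}_1$ up to finitely many exceptions.

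The main obstacle is a bookkeeping point: one must verify that the anisotropic spaces $\Lambda_k$ provided by Theorem \ref{th:main} produce the same Ruelle--Pollicott resonance set in $\Re(s)>-C$ as the spaces employed in \cite{MR3648976}. This is essentially automatic because the resonances of an Anosov generator in a fixed half-plane are intrinsic to the flow, independent of the particular anisotropic completion in which they are realized as eigenvalues; still the argument requires a short verification that the two spectral projectors agree, which is a standard density-and-approximation argument once both spaces contain $\Gamma^\infty_{cu}(V_k)$ as a core on which the resolvents coincide. Beyond this identification, the only genuinely non-trivial input is the Weyl-type lower bound of \cite{MR3648976} guaranteeing that the principal band of $\cL^t_0$ contains infinitely many resonances, which we use as a black box.
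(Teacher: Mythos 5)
Your proposal matches the paper's (implicit) argument: the paper presents this theorem as a restatement of the Faure--Tsujii result from \cite{MR3648976} (band structure of resonances plus Weyl-law lower bound) upgraded by the new fact, furnished by the last clause of Theorem \ref{th:main} (via Proposition \ref{pp:lf2}), that each $d_k(s)$ is entire rather than merely meromorphic, exactly as Remark \ref{rem:previous} explains. Your flagged ``bookkeeping point'' --- that the resonance set in a fixed half-plane is intrinsic and so the $\Lambda_k$ of Theorem \ref{th:main} locate the same eigenvalues as the spaces used in \cite{MR3648976} --- is indeed the only place requiring a (standard) check, and you identify it correctly.
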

\begin{Remark} \label{rem:previous}
In \cite{MR3648976}, we proved a very similar statement but we only showed that $d_k(s)$ are meromorphic. This improvement is brought by the ``fiber-wise cohomology argument" that we will develop in Section \ref{sec:fiber}.  
\end{Remark}

If we compare the theorem above with Theorem \ref{th:main}, we see that the zeros in the neighborhood $\mathcal{R}_0$ of the imaginary axis are the discrete eigenvalues of the generator $\mathbb{A}_0$ of the transfer operator \eqref{eq:bbLt} acting on the bottom cohomology space $\mathbb{H}^0$, which consists of distributional section of the line bundle $L$ that are constant along the unstable leafs with respect to the dynamical connection. (We can find a similar observation in the recent paper \cite{FaureGuillarmou}.) This observation is interesting because such restriction of the operator \eqref{eq:bbLt} is closely related to the geometric quantization of the flow $f^t$ in the sense of Kostant and Souriau when we take the unstable foliation as ``polarization".  In this context, it will be also of worth noting that the density of the zeros in the region $\mathcal{R}_0$ satisfies an analogue of Weyl law \cite{MR3648976}. 

\begin{Remark}\label{rem:conj}
As we have already mentioned in \cite{MR3648976} (in a slightly different expression), we expect that the semi-group of transfer operators  \eqref{eq:bbLt} acting on the higher cohomology space $\mathbb{H}^k$ ($k>0$) will be of finite rank up to a strongly contracting remainder and will not produce many singularities of the zeta function  $\zeta_{sc}(s)$. In a sense, the main result of this paper is the first step to attack this problem, which enables us to state the problem rigorously. 
\end{Remark}

\section{Extensions}\label{sec:ext}
As we have mentioned in the introduction, the main difficulty in the proof of Theorem \ref{th:main} is that the transfer operators $\cL^t_k$ are not smooth. The idea to resolve (or avoid) this difficulty  is to consider appropriate extensions of the flow $f^t$ and to embed the non-smooth operators $\cL^t_k$ in smooth transfer operators for the extended flows. Below we explain such constructions. 

\subsection{The Grassmann extensions} \label{ss:gext}
As a preliminary to the construction given in the following subsections, we discuss in a little more detail about the Grassmann extension \eqref{cd:ftg2} of the Anosov flow $f^t:M\to M$ considered in Subsection \ref{ss:scz}. 
Recall that $\pi_G:G\to M$ is the Grassmann bundle of $M$ that consists of $d_u$-dimensional subspaces of $TM$ and that $\hf^t_G:G\to G$ in \eqref{eq:fG} is the flow on $G$ induced naturally by $f^t$. 
The image $\att_G=\iota_u(M)$ of the section $\iota_u$ in \eqref{eq:iotau} is a hyperbolic attractor for the action of $\hf^t_G$, because the flow $\hf^t_G$ is exponentially contracting in the fibers in its neighborhood. We therefore can take a small tubular neighborhood $\cU_G$ in its the attracting basin so that 
\[
\hf^t_G(\cU_G)\Subset \cU_G\;\;\text{ for $t>0$\quad and} \quad\cap_{t>0}\hf^t_G(\cU_G)=\att_G.
\] 
Henceforth we write $V_G$ for the restriction of the tautological vector bundle to $\cU_G$, that is, we set
\begin{equation}\label{eq:tautologicalbundle}
\pi_V:V_G\to \cU_G, \quad V_G:=\{(q,v)\in \cU_G\times TM\mid  v\in [q]\},
\end{equation}
where $[q]$ denotes the $d_u$-dimensional subspace of $T_{\pi_G(q)}M$ represented by $q$. 
The flow $f^t$ for $t\ge 0$ naturally induces the push-forward action on $V_{G}$:
\[
(Df^t)_*:V_{G}\to V_{G}, \quad (Df^t)_*(q,\sigma)=(\hf^t_G(q), (Df^{t})(\sigma)).
\]
Recall that, in Subsection \ref{ss:scz},  we introduced a $C^\infty$ one-parameter group of vector bundle maps $\hg^t:\hL\to \hL$ of a $C^\infty$ line bundle $\hL$ on $G$ over $\hf^t_G$. The line bundle $L$ is defined as the pull-back of $\hL$ by  $\iota_u$ and then $\hg^t$ induces a continuous one-parameter group of vector bundle maps $g^t:L\to L$ over $f^t$. 
We write $\hL^*$ and $L^*$ for the dual line bundle of $\hL$ and $L$ respectively, so that $L^*$ is identified with the pull-back of $\hL^*$ by $\iota_u$. 

In addition, we introduce two line bundles on $G$:
\[
V_G^{\wedge d_u}\quad\text{and}\quad |\Det^*_G|=\pi_{G}^*|\Det^*|.
\]
The former is the determinant line bundle of $V_G$ and the latter is the pull-back of the $1$-density bundle $|\Det^*|$ of $T^*M$ by  $\pi_G:G\to M$. The pull-backs of these two line bundles by $\iota_u$ are by definition the determinant bundle $E_u^{\wedge d_u}$ of the unstable subbundle $E_u$ and the $1$-density bundle $|\Det^*|$ of the cotangent bundle $T^*M$. 
In parallel to  the argument for the line bundle $L$ in Subsection \ref{ss:extder}, we see the line bundles $(E_u)^{\wedge d_u}$ and $T^*M^{\wedge d}$ are equipped with the unique flat connections along the unstable leafs that are invariant with respect to the natural action of the flow $f^t$ on them. Such flat connections are called the dynamical flat connections as well. 

From the line bundles $\hL^*$, $|\Det_G^*|$ and $V_G^{\wedge d_u}$, we define the line bundle
\begin{align*}
&\pi_{\frL_{\otimes}}:\frL_{\otimes}\to G,\qquad \frL_{\otimes}=\hL^*\otimes |\Det_G^*| \otimes V_G^{\wedge d_u}
\intertext{by taking the tensor product and also the $3$-dimensional vector bundle}
&\pi_{\frL_{\oplus}}:\frL_{\oplus}\to G,\qquad \frL_{\oplus}=\hL^*\oplus |\Det^*_G| \oplus V_G^{\wedge d_u}
\end{align*}
by taking the direct (or Whitney) sum. 
Clearly flow $f^t$ and the one-parameter group $\hg^t$ induce the semi-group of vector bundle maps 
\[
\frg^t_{\oplus}:\frL_{\oplus}\to \frL_{\oplus}\quad \text{and}\quad  
\frg^t_{\otimes}:\frL_{\otimes}\to \frL_{\otimes}\quad \text{for }t\ge 0
\]
over the semi-flow $\hf^t_G:G\to G$.

\subsection{Jet spaces}
Below we consider further extensions of the Anosov flow $f^t$. 
We consider the jet spaces of pairs of a $C^\infty$ hypersurface $S$ of dimension $d_u$ in $M$
and a $C^\infty$ flat connection of the line bundles $\frL_{\oplus}$ along the lift of the hypersurface $S$ to $G$. The precise definition is given as follows.

First we consider a germ of  $d_u$-dimensional $C^\infty$  hypersurface in $M$ which is represented by a germ of $C^\infty$ immersion 
\[
\bs:(\real^{d_u},0)\to (M,\bs(0)).
\]
Its differential gives a germ of hypersurface in $G$ represented by 
\[
\bs_G:(\real^d,0)\to (G,D\bs_G(0)), \quad \bs_G(x)=(\bs(x), [(D\bs)_x(\real^{d_u})]).
\]
For convenience, we consider only the germ $\bs$ such that the image $\bs_G(0)$ of the origin $0$ is contained in the neighborhood $\cU_G$ of the attractor $\att_G$. 

Second we consider a triple of germs of $C^\infty$ flat connections of the line bundles  $\hL^*$, $|\Det_G^*|$ and $V_G^{\wedge d}$ along the lift $\bs_G$ of $\bs$. We suppose that such a triple is represented by a germ of $C^\infty$ map 
\[
\bh=(h_0,h_1,h_2):(\real^{d_u},0)\to \frL_{\oplus}=\hL^*\oplus |\Det^*_G| \oplus V_G^{\wedge d_u}
\]
satisfying $\pi_{\frL_{\oplus}}\circ \bh=\bs_G$ and $h_i(0)\neq 0$ for $i=0,1,2$. (Each $h_i$ determines a unique flat connection with respect to which $h_i$ is a constant section.)

Let $\cJ$ be the set of pairs $\bsigma=(\bs,\bh)$ of germs of $C^\infty$ immersions 
\[
\bs:(\real^{d_u},0)\to (M,\bs(0))\quad \text{and}\quad  \bh:(\real^{d_u},0)\to \frL_{\oplus}
\]
satisfying the conditions as above. For each integer $\ell\ge 1$, we consider the equivalence relation $\sim_\ell$ on $\cJ$ such that $(\bs,\bh)\sim_\ell (\tilde{\bs},\tilde{\bh})$ if and only if there exist a triple $\mathbf{c}=(c_0,c_1,c_2)$ of non-zero constants and  a germ $\varphi:(\real^{d_u},0)\to (\real^{d_u},0)$ of $C^\infty$ diffeomorphism such that 
\begin{enumerate}
\item $\tilde{\bs}$ and $\bs\circ \varphi$ have contact of order $\ell+1$ at $0$, and 
\item $\tilde{\bh}$ and $(\mathbf{c}\cdot \bh)\circ \varphi$ have contact of order $\ell$ at $0$ 
\end{enumerate}
where $\mathbf{c}\cdot \bh$ denotes the component-wise multiplication. 
The quotient space $\bbJ^\ell:=\cJ/\sim_\ell$ is naturally equipped with the structure of smooth fiber bundle over $M$, which is denoted by
\[
\pi:\bbJ^\ell\to M, \qquad \pi([(\bs,\bh)]_{\ell})=\bs(0)
\]
where $[(\bs,\bh)]_\ell$ is the equivalence class of $(\bs,\bh)$ with respect to the equivalence relation $\sim_{\ell}$. For convenience, we set $\bbJ^0:=\cU_G\subset G$ so that we have the sequence of natural projections:
\begin{equation}\label{eq:Jseq}
\begin{CD}
 \bbJ^{\ell+1} @>>> \bbJ^{\ell}  @>>> \cdots @>>> \bbJ^{1} @>>> \bbJ^0=\cU_G@>>> M \end{CD}
\end{equation}
For integers $i\ge j\ge 0$, we write $\pi_{i,j}:\bbJ^i\to \bbJ^j$ and $\pi_i:\bbJ^i\to M$ for the projections obtained as compositions of those in this sequence. 

The semi-group $\frg^t_\oplus$ acts on $\cJ$ in such a way that   
\[
\mathcal{G}^t:\cJ\to \cJ,\quad
\mathcal{G}^t(\bsigma):=(f^t\circ \bs, \frg^t_{\oplus}\circ \bh)\quad \text{for }\bsigma=(\bs,\bh).
\]
This induces the (well-defined) one-parameter semi-group 
\[
\hf^t_\ell:\bbJ^\ell\to \bbJ^\ell,\qquad [\bsigma]_\ell \mapsto [\mathcal{G}^t(\bsigma)]_\ell.
\]
Since the action of $\hf^t_{\ell}$ contracts the fibers of the projection $\pi_{\ell,0}:\bbJ^\ell\to \bbJ^0=\cU_G$, it admits a unique continuous section 
\[
\iota_{u}^\ell:M\to \bbJ^\ell\quad \text{such that $\pi_{\ell, 0}\circ \iota_{u}^\ell=\iota_u$}
\]
such that the following diagram commutes:
\begin{equation}\label{cd:ftg}
\begin{CD}
\bbJ^\ell @>{\hf^t_\ell}>> \bbJ^\ell\\
@A{\iota_u^\ell}AA @A{\iota_u^\ell}AA\\
M@>{f^t}>> M
\end{CD}
\end{equation} 
The image  $\mathcal{A}_\ell:=\iota_u^\ell(M)$ of the section $\iota_u^\ell:M\to \bbJ^\ell$ is a hyperbolic attractor for the semi-flow $\hf^t_\ell:\bbJ^\ell\to \bbJ^\ell$ and we can take a tubular neighborhood $\mathcal{U}_{\ell}$ of $\mathcal{A}_\ell$ so that $\pi_{\ell,\ell-1}(\mathcal{U}_{\ell})=\mathcal{U}_{\ell-1}$ and that
\[
\hf^t_\ell(\mathcal{U}_\ell)\Subset \mathcal{U}_\ell\quad\text{ for $t>0$\quad and }\quad\cap_{t>0} \hf^t_\ell(\mathcal{U}_\ell)=\mathcal{A}_\ell.
\] 
That is to say, the situation is similar to the case of the Grassmann extension considered in the last subsection though that dimensions of the fibers will be much larger.

\subsection{Transfer operators for the extended semi-flows} 
Below we set up a few vector  bundles on the region $\cU_\ell\subset \bbJ^\ell$ and introduce semi-groups of transfer operators acting on the sections of them. 
Recall that $\pi_\ell:\cU_\ell\to M$ is the restriction of the natural projection from $\bbJ^\ell$ to $M$. The kernel of its differential $D\pi_\ell:T\cU_\ell\to TM$,  
\[
\ker D\pi_\ell=\{(x,v)\in T\cU_\ell \mid D\pi_\ell(v)=0\},
\]
is a $C^\infty$ vector subbundle of $T\cU_\ell$ with dimension
\[
d_\ell:=\dim \ker D\pi_\ell=\dim \bbJ^\ell-\dim M.
\] 
The  $1$-density line bundle of the dual of $\ker D\pi_\ell$ is written
\[
|\Omega_\ell|:=|((\ker D\pi_\ell)^*)^{\wedge d_\ell}|.
\]
For integers $\ell\ge 0$ and $0\le k\le d_u$, we consider the vector bundles
\begin{align*}
\hat{V}_{k,\ell}&:=\pi^*_{\ell,0}(\hL)\otimes  \pi^*_{\ell,0}(V_G^*)^{\wedge k}\otimes |\Omega_\ell|
\intertext{
and}
\hat{V}_{k,\ell}^\dag&:=\pi^*_{\ell,0}(\frL_{\otimes})\otimes  
\pi^*_{\ell,0}(V_G^*)^{\wedge k}
\end{align*}
 over $\cU_\ell$, where $\pi^*_{\ell,0}$ denotes the pull-back of vector bundles  by the projection $\pi_{\ell,0}:\bbJ^\ell\to \bbJ^0=\cU_\ell$. 
\begin{Remark}
As we will see below, we regard the space of sections of $\hat{V}_{k,\ell}^\dag$ as the dual of the space of sections of $\hat{V}_{k,\ell}$. 
\end{Remark}
 
The  flow $f^t$ together with the semi-group $\hat{g}^t$ naturally induces the push-forward action  on these vector bundles over $\hf^t_\ell$, which are denoted  by 
\begin{equation}\label{eq:Ftkl}
\hat{F}^t_{k,\ell}:\hat{V}_{k,\ell}\to \hat{V}_{k,\ell},\qquad
\hat{F}^t_{\dag,k,\ell}:\hat{V}_{k,\ell}^\dag\to \hat{V}_{k,\ell}^\dag\qquad\text{respectively}.
\end{equation}
Recall the vector bundle $V_k$ on $M$ in \eqref{eq:Vk} and set 
\[
V_k^{\dag}=(L^*\otimes E_u^{\wedge d_u}\otimes |\Det^*|) \otimes (E_u^*)^{\wedge k} \quad \text{for $0\le k\le d$}. 
\]
Clearly this vector bundle is identified with the pull-back of the vector bundle $\hat{V}^\dag_{k,\ell}$ by the section $\iota_u^\ell:M\to \bbJ^\ell$ for any $\ell \ge 1$. Thus, if we write
\[
F^t_{k}:V_k\to V_k\quad \text{and}\quad 
F^t_{\dag,k}:V_k^\dag\to V_k^\dag
\]
for the continuous semi-group of vector bundle maps induced by the flow $f^t$ and the semi-group $g^t$, 
we have the commutative diagrams 
\begin{equation}\label{cd:Fi}
\begin{CD}
\hat{V}_{k,\ell}@>{\hat{F}^t_{k,\ell}}>>\hat{V}_{k,\ell}\\
@A{\iota_u^\ell}AA@A{\iota_u^\ell}AA\\
V_k@>{F^t_{k}}>>V_k
\end{CD}
\qquad\qquad 
\begin{CD}
\hat{V}_{k,\ell}^\dag@>{\hat{F}^t_{\dag,k,\ell}}>>\hat{V}_{k,\ell}^\dag\\
@A{\iota_u^\ell}AA@A{\iota_u^\ell}AA\\
V_k^\dag@>{F^t_{\dag,k}}>>V_k^\dag
\end{CD}
\end{equation}

Below we consider the transfer operators induced by the vector bundle maps $\hat{F}^t_{k,\ell}$ and $F^t_{k}$. But, since we would like to consider the action of such transfer operators acting on the distributional sections, we start with considering their duals. 
The semi-group $\hat{F}^t_{\dag,k,\ell}$ of vector bundle maps  in  \eqref{eq:Ftkl} induces the semi-group of the \emph{pull-back} operators 
\begin{equation}\label{eq:Fkl}
(\hat{F}^t_{\dag,k,\ell})^*:\Gamma^{\infty}(\hat{V}_{k,\ell}^\dag)\to \Gamma^{\infty}(\hat{V}^\dag_{k,\ell}), \qquad (\hat{F}^{t}_{\dag,k,\ell})^*u(x)= \hat{F}^{-t}_{\dag,k,\ell}(u(\hf_\ell^t(x))).
\end{equation}
The the pull-back operators by $F^t_{\dag,k}$ and $\iota_u^\ell$ are denoted respectively by
\begin{equation}\label{eq:ftiu}
(F^t_{\dag,k})^*:\Gamma^{\infty}_{cu}(V_k^\dag)\to \Gamma^{\infty}_{cu}(V_k^\dag)\quad \text{and}\quad (\iota_u^\ell)^*:\Gamma^\infty_{0}(\hat{V}_{k,\ell}^\dag)\to \Gamma^{\infty}_{cu}(V_k^\dag).
\end{equation}
Then correspondingly to 
the commutative diagram on the right-hand side of \eqref{cd:Fi}, we have the commutative diagram of transfer operators
 \begin{equation}\label{cd:gamma0}
\begin{CD}
 {\Gamma}^{\infty}(\hat{V}^\dag_{k,\ell})@>{(\hat{F}^t_{\dag,k,\ell})^*}>> {\Gamma}^{\infty}(\hat{V}^\dag_{k,\ell})\\
@V{(\iota_u^\ell)^*}VV@V{(\iota_u^\ell)^*}VV\\
\Gamma^{\infty}_{cu}(V_k^\dag)@>{(F_{\dag,k}^t)^*}>>\Gamma^{\infty}_{cu}(V_k^\dag)
\end{CD}
\end{equation}
Below we consider the dual of this commutative diagram. (But we will consider the diagram \eqref{cd:gamma0} with $k$ replaced by $d-k$ for convenience.)  Since we have the contraction 
\[
(\cdot, \cdot): \hat{V}^\dag_{d-k,\ell}\times \hat{V}_{k,\ell}\to |\Omega_\ell|\otimes \pi^*_{\ell} |\Det^*|\simeq |\Det_{\bbJ^\ell}^*|
\]
where $|\Det_{\bbJ^\ell}^*|$ denotes the $1$-density bundle of $T^*\bbJ^\ell$, we can consider the pairing
\begin{equation}\label{eq:pairing}
\langle \cdot,\cdot\rangle: {\Gamma}^{\infty}(\hat{V}^\dag_{d-k,\ell})\times \Gamma_{0}^{\infty}(\hat{V}_{k,\ell}) \to \complex, \quad 
\langle v,u\rangle=\int (v,u)  
\end{equation}
where $\Gamma_{0}^{\infty}(\hat{V}_{k,\ell})$ denotes the space of $C^\infty$ sections of the $C^\infty$ vector bundle $\hat{V}_{k,\ell}$ with compact support. 
With this pairing, we regard  the dual space of ${\Gamma}^{\infty}(\hat{V}^\dag_{d-k,\ell})$ as the space of distributional sections of $\hat{V}_{k,\ell}$, and  write $\Gamma^{-\infty}(\hat{V}_{k,\ell})$ for it. Similarly, from the contraction
\[
(\cdot, \cdot): V_{d-k}^\dag\times V_k\to |\Det^*|
\]
we define the pairing 
\begin{equation}\label{eq:pairing2}
\langle \cdot,\cdot\rangle: {\Gamma}^{\infty}_{cu}({V}^\dag_{d-k})\times \Gamma^{\infty}_{cu}({V}_{k}) \to \complex, \quad 
\langle v,u\rangle=\int (v,u).
\end{equation}
Then we may regard the dual space of $\Gamma^\infty_{cu}(V_{d-k}^\dag)$ as the space of distributional sections of $V_{k}$, and write $\Gamma^{-\infty}(V_{k})$ for it. 

With the notation introduced above, we write the dual of the operator $(\hat{F}^t_{\dag, d-k,\ell})^*$ as  
\begin{equation}\label{eq:bLct}
\hat{\cL}^t_{k,\ell}:
\Gamma^{-\infty}(\hat{V}_{k,\ell})
\to 
\Gamma^{-\infty}(\hat{V}_{k,\ell}).
\end{equation}
This is an extension of the transfer operator defined as the push-forward operator by $F^t_{k,\ell}:\hat{V}_{k,\ell}\to  \hat{V}_{k,\ell}$. 
The dual of the operator $(F_k^t)^*$ is nothing but the  the transfer operator  $\cL_k^t:\Gamma^\infty(V_{k})\to \Gamma^\infty(V_{k})$ introduced in \eqref{eq:cLtk}. Hence, if we write  
\begin{equation}\label{eq:iotau}
(\iota_u^{\ell})_*:\Gamma^\infty_{cu}(V_k)\to \Gamma^{-\infty}(\hat{V}_{k,\ell})
\end{equation}
for the restriction of the dual of $(\iota_u^{\ell})^*: {\Gamma}^{\infty}(\hat{V}^\dag_{k,\ell})\to \Gamma^{\infty}_{cu}(V_k^\dag)$, we obtain the commutative diagram
\begin{equation}\label{cd:gamma0dual}
\begin{CD}
 {\Gamma}^{-\infty}(\hat{V}_{k, \ell})@>{\hat{\cL}^t_{k,\ell}}>> {\Gamma}^{-\infty}(\hat{V}_{k,\ell})\\
@A{(\iota_u^\ell)_*}AA@A{(\iota_u^\ell)_*}AA\\
\Gamma_{cu}^{\infty}(V_{k})@>{\cL^t_{k}}>>\Gamma_{cu}^{\infty}(V_{k})
\end{CD}
\end{equation}
as the dual of \eqref{cd:gamma0}. 
 
\subsection{The exterior derivative operator along unstable leafs}
The commutative diagram \eqref{cd:gamma0dual} implies that  the non-smooth transfer operator $\cL^t_{k}$ that we are interested in is embedded in the smooth transfer operator $\hat{\cL}^t_{k,\ell}$ on the extended space by $(\iota_u^\ell)_*$ in a sense. Next we discuss about  the exterior derivative operators along unstable leafs. To proceed, let us set 
\begin{equation}\label{eq:VV}
\hat{V}_{k}=\hat{V}_{k,d-k} \quad\text{and}\quad \hat{V}^\dag_{k}=\hat{V}^\dag_{k,k}\qquad\text{for $0\le k\le d_u$}
\end{equation}
for simplicity. 
For  $0\le k<d_u$, we  are going define the operator
\begin{equation}\label{opD}
\hder_{k}^u: \Gamma^{-\infty}_{0}(\hat{V}_{k})\to \Gamma^{-\infty}_{0}(\hat{V}_{k+1})
\end{equation} 
so that the following diagram commutes:
\begin{equation}\label{cd:extDdual}
\begin{CD}
 {\Gamma}^{-\infty}(\hat{V}_{k})@>{\hder_{k}^u}>> {\Gamma}^{-\infty}(\hat{V}_{k+1})\\
@A{(\iota_u^\ell)_*}AA@A{(\iota_u^\ell)_*}AA\\
\Gamma_{cu}^{\infty}(V_{k})@>{\der_{k}^u}>>\Gamma_{cu}^{\infty}(V_{k+1})
\end{CD}
\end{equation}Similarly to the argument in the last subsection, we first introduce the operators  
\begin{equation}\label{eq:hderd}
\hder_{\dag,k}^u:\Gamma^{\infty}(\hat{V}_{k}^\dag)\to \Gamma^{\infty}(\hat{V}_{k+1}^\dag)\quad \text{for $0\le k<d_u$}
\end{equation}
and then define $\hder_k^u$ as their duals. 

Suppose that $\varphi \in  \Gamma^{\infty}(\hat{V}^\dag_{k})$ and $[\bsigma]_{k+1}\in \mathcal{U}_{k+1}\subset \bbJ^{k+1}$ are given, where $[\bsigma]_{k+1}$ denotes the equivalence class of an element $\bsigma\in \cJ$.  Below we explain how we define its image $\hder_{\dag,k}^u \varphi(\bsigma)$. Recall that $\varphi \in  \Gamma^{\infty}(\hat{V}^\dag_{k})$ is a section of the vector bundle $\hat{V}_k^\dag=\hat{V}^\dag_{k,k}$ defined on the tubular neighborhood $\mathcal{U}_{k}\subset \bbJ^k$ of the attractor $\mathcal{A}_k$.
\vspace{3mm}

\paragraph{\textbf{First Step:}}
By definition,  $\bsigma=(\bs,\bh)$ is a pair of $C^\infty$ germs  
\[
\bs:(\real^{d_u},0)\to (M,\bs(0))\quad \text{and}\quad \bh:(\real^{d_u},0)\to \frL_\oplus.
\] 
Let us write $S$ for the germ of $d_u$-dimensional hypersurface in $M$ at $\bs(0)$ given as the image of $\bs$. The component $\bh$ in $\bsigma=(\bs,\bh)$ gives a germ of $C^\infty$ flat connection of $\frL_{\otimes}$ along the its lift $\bs_G$, as it gives a germ of $C^\infty$ flat connections of the factors $\hL^*$, $|\Det_G^*|$ and $\otimes V_G^{\wedge d_u}$ of $\frL_{\otimes}$  along the lift $\bs_G$. 
Therefore the pair $\bsigma=(\bs,\bh)$ gives a germ of $C^\infty$ map  
\[
\Psi_{\bsigma}: (\real^{d_u}, 0) \to \bbJ^{k}
\]
which assign  each point $x_0\in \real^d$ in a neighborhood of the origin $0$ the element $[(\bs_{x_0},\bh_{x_0})]_{k}\in \bbJ^k$ that consists of pair of 
\begin{itemize}
\item the $(k+1)$-jet of the germ of the map $x\mapsto \bs(x+x_0)$ at $x=0$ and
\item the $k$-jet of the germ  of  the map $x\mapsto \bh(x+x_0)$ at $x=0$.
\end{itemize}
Note that the first jet of the germ $\Psi_{\bsigma}$ is determined by the element $\bsigma\in \bbJ^{k+1}$. 

\vspace{3mm}

\paragraph{\textbf{Second Step:}}
Next we pull-back the section $\varphi: \mathcal{U}_k \to \hat{V}^\dag_{k}$ by $\Psi_{\bsigma}$ and obtain the section
\[
\Psi_{\bsigma}^* \varphi:(\real^{d_u},0)\to  \bs^*_G(\frL_{\otimes})\otimes  (T^*\real^{d_u})^{\wedge k}.
\] 
Here we identify the pull-back of the vector bundle $\hat{V}^\dag_{k}$ by $\Psi_{\bsigma}$ with
\[
\bs^*_G(\frL_{\otimes}\otimes (V_G^*)^{\wedge k})=
\bs^*_G(\frL_{\otimes})\otimes D\bs^*((TS)^*)^{\wedge k}) =
\bs^*_G(\frL_{\otimes})\otimes  (T^*\real^{d_u})^{\wedge k}.
\]
Note that the first jet of the section $\Psi_{\bsigma}^* \varphi$ thus defined is determined by $\bsigma$. 
\vspace{3mm}

\paragraph{\textbf{Third Step:}}
The component $\bh$ in $\bsigma=(\bs,\bh)$ induces a flat connection of the line bundles $\bs^*_G(\frL_{\otimes})$ and therefore we can define the exterior derivative of $\Psi_{\bsigma}^* \varphi$, trivializing $\bs^*_G(\frL_{\otimes})$ by that flat connection.  We write  $\der(\Psi_{\bsigma}^* \varphi)$ for the result of such operation. 
Since the first jet of $\Psi_{\bsigma}^* \varphi$ at the origin $0$ is determined by $\bsigma$, so is the value of $\der (\Psi_{\bsigma}^* \varphi)$ at $0$. Finally, as we did in the last step, we identify $\der(\Psi_{\bsigma}^* \varphi)(0)\in \bs^*_G(\frL_{\otimes})\otimes  (T^*\real^{d_u})^{\wedge (k+1)}$ with an element in $\hat{V}_{k+1}^\dag$ at $\bsigma\in \mathcal{U}_{k+1}$, for which we write $\hder^u_{\dag,k}\varphi(\bsigma)$. The element $\hder^u_{\dag,k}\varphi(\bsigma)$ depends on $\bsigma$ in $C^\infty$ manner and therefore we obtain a smooth section 
$\hder^u_{\dag,k}\varphi\in \Gamma^{\infty}(\hat{V}_{k+1}^\dag)$. 

Once we define the operator $\hder_{\dag,k}^u:\Gamma^{\infty}(\hat{V}_{k}^\dag)\to \Gamma^{\infty}(\hat{V}_{k+1}^\dag)$  as above, we define the operator $\hder_k^u$ in \eqref{opD} as 
\[
\hder_{k}^u=(-1)^{d_u-k}\cdot (\hder_{\dag,k}^u)^*: \Gamma^{-\infty}_{0}(\hat{V}_{k})\to \Gamma^{-\infty}_{0}(\hat{V}_{k+1})
\]
where  $(\hder_{\dag,k}^u)^*$ denotes the dual of $\hder_{\dag,k}^u$ with respect to the pairing \eqref{eq:pairing} and we put the sign $(-1)^{d_u-k}$ for convenience in the argument below. 

Trivializing  the line bundle $L^*\otimes |\Det^*|\otimes (E^u)^{\wedge d_u}$ with respect to the dynamical flat connection, we define the exterior derivative operators along the unstable foliation
\begin{equation}\label{eq:derd}
\der^u_{\dag,k}:\Gamma^\infty_{cu}({V}^\dag_k)\to \Gamma^\infty_{cu}({V}^\dag_{k+1}), \quad k=0,1,2,\cdots, d_u-1. 
\end{equation}
Then the following diagram commutes
\[
\begin{CD}
\Gamma^\infty(\hat{V}^\dag_{k})@>{\hder_{\dag,k}^u}>>\Gamma^\infty(\hat{V}^\dag_{k+1})\\
@V{(\iota_u^k)^*}VV@V{(\iota_u^k)^*}VV\\
\Gamma^\infty_{cu}(V^\dag_k)
@>{\der_{\dag,k}^u}>>
\Gamma^\infty_{cu}(V^\dag_{k+1})
\end{CD}
\]
Further the operator $\hder^u_{\dag,k}$ commutes with the operator $(F^t_{\dag,k,k})^*$ in \eqref{eq:Fkl}. Therefore, writing  $F^t_{k}$ for $F^t_{k,k}$ correspondingly to \eqref{eq:VV},  we obtain the following commutative diagram
\begin{equation}\label{cd:full}
\begin{tikzcd}
& \Gamma^\infty(\hat{V}^\dag_{k}) 
\arrow[dl,"(F^t_{k})^*"'] \arrow[rr,"\hder^u_{\dag,k}"]\arrow[dd,"(\iota_{u}^k)^*" near end]  & & \Gamma^\infty(\hat{V}^\dag_{k+1}) \arrow[dl, "(F^t_{k+1})^*"']\arrow[dd,"(\iota_{u}^{k+1})^*" near end]  \\ \Gamma^\infty(\hat{V}_{k}^\dag) \arrow[dd, "(\iota_{u}^k)^*" near end]
\arrow[rr, "\hder^u_{\dag,k}" near end, crossing over]  & & \Gamma^\infty(\hat{V}^\dag_{k+1})\\
& \Gamma^\infty_{cu}(V_k^\dag) \arrow[dl, "(F^t_{\dag,k})^*"] \arrow[rr, "\der^{u}_{\dag,k}" near start] & & \Gamma^\infty_{cu}(V_{k+1}^\dag) \arrow[dl,"(F^t_{\dag,k})^*"] \\
\Gamma^\infty_{cu}(V_k^\dag) \arrow[rr, "\der^{u}_{\dag,k}" near end] & & \Gamma^\infty_{cu}(V_{k+1}^\dag)\arrow[from=uu, "(\iota_{u}^{k+1})^*" near end, crossing over]
\end{tikzcd}
\end{equation}
 
Now we consider the dual of this diagram \eqref{cd:full}. We first check that, in view of the pairing \eqref{eq:pairing2},  the exterior derivative operator  
$\der^u_{k}$ along the unstable foliation introduced in \eqref{def:deltau} is the dual of the exterior derivative operator $\der^{\dag}_{d_u-k-1}$ in \eqref{eq:derd}
up to  multiplication by $\pm 1$. In fact, we prove
\begin{equation}\label{eq:realduv}
\langle v, \der^u_k u\rangle =(-1)^{d_u-k} \langle\der^{u}_{\dag,d_u-k-1} v, u\rangle
\end{equation}
for any $v\in \Gamma^\infty_{cu}(V^\dag_{d_u-k-1})$ and $u\in \Gamma^\infty_{cu}(V_k)$, where $\langle \cdot, \cdot\rangle$ denotes the pairing \eqref{eq:pairing2}.   For the proof, we may suppose that $v$ is supported in a small disk on $M$ which is a disjoint union of the  local unstable manifolds $W^u_{\loc}(p)$ because the both sides of the equality are linear with respect to  $v$. 

If we trivialize the line bundles $L=\iota_u^*(\hL)$ and $\iota_u^*(\frL_{\otimes})$ along the local unstable manifolds $W^u_{\loc}(p)$ according to the dynamical flat connection, we may suppose that $u$ and $v$ are $(d_u-k-1)$-form and $k$-form on $W^u_{\loc}(p)$ and 
\begin{align}\label{eq:stokes}
0&= \int_{\partial W^u_{\loc}(p)}v\wedge u\; = \int_{W^u_{\loc}(p)}d (v\wedge u)\\
&=\int_{W^u_{\loc}(p)}  d v\wedge  u +(-1)^{d_u-k-1} \int_{W^u_{\loc}(p)}  v \wedge du.\notag
\end{align}
Therefore, by taking integration  with respect to the local unstable manifolds $W^u_{\loc}(p)$,  we obtain the required equality \eqref{eq:realduv}. 
\begin{Remark} For the last sentence above, some explanation may be in order. For a section $\varphi$ of $|\Det^*|$, its integration $\int \varphi $ is well defined and can be computed on a local chart as 
\[
\int \varphi =\int \frac{\varphi(x)}{|dx|} |dx|
\]
where we denote the local coordinates by $x$.
If we take a transversal section $S$ to the unstable foliation and fix a non-vanishing section $\sigma_0$ (resp. $\tau_0$) of the determinant bundle of $T^*S$ (resp. $(T_SM/TS)^*$) on $S$. Then we can introduce continuous sections $\sigma$ (resp. $\tau$) of the determinant bundle of $E_u^*\oplus E_0^*=(E_u)^{\perp}$ (resp. $E_u^*$) so that it coincides with $\sigma_0$ (resp. $\tau_0$) and constant with respect to the dynamical flat connection along the unstable foliation. Then the integral $\int \varphi$ is computed as 
\begin{align*}
\int \varphi&=\int \frac{\varphi}{|\sigma\wedge \tau|} |\sigma\wedge \tau| =\int_S \left(\int_{W^u_{\loc}(s)} \frac{\varphi}{|\sigma\wedge \tau|} \frac{|\tau|}{|dx_s|} |dx_s|\right) \frac{|\sigma_0|}{|ds|}|ds|
\end{align*}
where $s$ denotes a coordinate on $S$ and $x_s$ denotes that on $W^u_{\loc}(s)$. The equation \eqref{eq:stokes} tells that, if we set $\varphi=d(v\wedge u)$,  the integral corresponding to that in the parentheses above vanishes, so that we obtain \eqref{eq:realduv}.
\end{Remark}

Finally, taking the dual of the diagram \eqref{cd:full} with $k$ replaced by $d_u-k-1$, we obtain the commutative diagram
\begin{equation}\label{cd:extreq}
\begin{tikzcd}
& \Gamma^{-\infty}({\hat{V}}_{k+1}) \arrow[from=dd, "(\iota_{u}^{d_u-k-1})_*" near start]
  & & \Gamma^{-\infty}({\hat{V}}_{k}) 
\arrow[ll,"\hder_{k}^{u}"']
  \\ \Gamma^{-\infty}({\hat{V}}_{k+1}) \arrow[ur,"\hat{\cL}^t_{k+1}"] 
  & & \Gamma^{-\infty}(\hat{V}_{k})\arrow[ur, "\hat{\cL}^t_{k}"]
  \arrow[ll, "\hder_k^u"' near start, crossing over]\\
& \Gamma^\infty_{cu}(V_{k+1})   & & \Gamma^\infty_{cu}(V_{k}) \arrow[ll, "\der^u_k"' near start]\arrow[uu,"(\iota_{u}^{d_u-k})_*" near start] \\
\Gamma^\infty_{cu}(V_{k+1}) \arrow[ur, "\cL^t_{k+1}"] \arrow[uu, "(\iota^{d_u-k-1}_{u})_*" near start ] & & \Gamma^\infty_{cu}(V_{k})\arrow[ll, "\der^u_{k}"']\arrow[ur, "\cL^t_{k}"]\arrow[uu,"(\iota^{d_u-k}_{u})_*" near start, crossing over]
\end{tikzcd}
\end{equation}
where $(\iota_u^{d_u-k})_*$ denotes the dual of the pull-back operator $(\iota_u^{d_u-k})^*$.


\section{Anisotropic Sobolev space}
In this section, we set up a scale of Hilbert spaces $\mathcal{H}^{r,s}(\hat{V}_{k})$ such that $\Gamma^{\infty}(\hat{V}_{k})\subset \mathcal{H}^{r,s}(\hat{V}_{k})\subset \Gamma^{-\infty}(\hat{V}_{k})$. These Hilbert spaces are called anisotropic Sobolev spaces because they are generalized Sobolev spaces with anisotropic weight function adapted to the hyperbolic structure of the flow. 

\subsection{A topologically trivial extension of an Anosov flow}\label{ss:aniso} 
We  consider a setting that abstracts those considered in the last section. (See Remark \ref{rem:model}.) 
Let $f^t:M\to M$ be a $C^\infty$ Anosov flow. Suppose that  $
\pi_B:B\to M$
is the  locally trivial $C^\infty$ fibration whose fibers are diffeomorphic to the unit disk $\mathbb{D}$ in $\real^{d^\perp}$ for some integer $d^\perp\ge 1$. 
Let  $\hf^t:B\to B$ be a $C^\infty$ semi-flow which makes the following diagram commutes
\[
\begin{CD}
B@>{\hat{f}^t}>>B\\
@V{\pi_B}VV @V{\pi_B}VV\\
M@>{f^t}>> M
\end{CD}
\] 
Further we suppose that $\hat{f}^t$ contracts the fibers of the fibration $B$ exponentially.  It implies existence of a unique continuous section 
\begin{equation}\label{eq:iota}
\iota:M\to B\quad \text{such that \;$\hat{f}^t\circ \iota=\iota\circ f^t$.}
\end{equation}
Its image $
\mathcal{A}:=\iota(M)$ 
 is a hyperbolic attractor of the semi-flow $\hat{f}^t$.

 The semi-flow $\hat{f}^t$ is uniformly hyperbolic. That is,  there exists a (forward) $D\hat{f}^t$-invariant decomposition 
\begin{equation}\label{eq:decompTb}
TB=\hat{E}_0\oplus \hat{E}_s\oplus \hat{E}_u 
\end{equation}
of the tangent bundle of $B$ such that 
\begin{enumerate}
\item the subbundle $\hat{E}_0$ is the smooth one-dimensional subbundle spanned by the generating vector field $\hat{v}$ of the semi-flow $\hat{f}^t$ and 
\item for some constant $C>1$, we have 
\begin{align*}
&\|D\hat{f}^t(v)\|\ge C^{-1} \exp(\chi t)\|v\|\quad \text{for $v\in \hat{E}_u$ and $t\ge 0$}
\intertext{and}
&\|D\hat{f}^t(v)\|\le C \exp(-\chi t)\|v\|\quad \text{for $v\in \hat{E}_s$ and $t\ge 0$.}
\end{align*}
\end{enumerate}
Note that the subbundles $\hat{E}_0$ and $\hat{E}_s$ are uniquely determined by the conditions  above, whereas $\hat{E}_u$ is unique only on the attractor $\mathcal{A}$ and not on the outside of it. In relation to the hyperbolic decomposition $TM=E_0\oplus E_s\oplus E_u$ for the Anosov flow $f^t$, it holds  
\[
D\pi_B(\hat{E}_0)=E_0\quad\text{and}\quad \hat{E}_s=D\pi_B^{-1}(E_s).
\]
Further we may and do assume
\[
D\pi_B(\hat{E}_u(x))=E_u\quad \text{at each point $x\in B$}.
\]
In the argument below, the action of the flow $\hf^t$ on the cotangent space is more important because we consider the action of the flow on the Fourier space (on local charts). Let us write 
\[
T^*B=\hat{E}_0^*\oplus \hat{E}_s^*\oplus \hat{E}_u^* 
\]
for the dual decomposition of \eqref{eq:decompTb}, where $\hat{E}_0^*$, $\hat{E}_s^*$ and $ \hat{E}_u^*$ denote the normal subbundles of $\hat{E}_s\oplus \hat{E}_u$, $\hat{E}_0\oplus \hat{E}_s$ and $\hat{E}_0\oplus \hat{E}_u$ respectively. This decomposition is preserved by the push-forward action $(D\hf^{-t})^*:T^*B\to T^*B$ of the flow $\hf^t$ on the cotangent bundle $T^*M$. 

Next we consider a $C^\infty$ vector bundle $\pi_{\hat{V}}: \hat{V}\to B$ and a $C^\infty$ one-parameter semi-group of vector bundle maps $\hat{F}^t:\hat{V}\to \hat{V}$  over $\hat{f}^t$, that is,  $\pi_{\hat{V}}\circ \hat{F}^t=\hf^t\circ \pi_{\hat{V}}$. Let $\hat{V}^*$ be the dual vector bundle of $\hat{V}$. We write 
\[
V=\iota^*\hat{V}\quad \text{and}\quad  V^*=\iota^*\hat{V}^*
\]
for the pull-back of $\hat{V}$ and $\hat{V}^*$ by $\iota:M\to B$ respectively and 
\begin{equation}\label{eq:iotastar}
\iota^*:\Gamma^\infty_{0}(\hat{V})\to \Gamma^\infty_{cu}(V)\quad\text{and}\quad
\iota^*:\Gamma^\infty_{0}(\hat{V}^*)\to \Gamma^\infty_{cu}(V^*)
\end{equation}
for the pull-back operators by  $\iota$.
The semi-group $\hat{F}^t$ induces that of the vector bundle maps $F^t:V\to V$ such that the following diagram commutes:
\[
\begin{CD}
\hat{V}@>{\hat{F}^t}>>\hat{V}\\
@VV{\iota^*}V@VV{\iota^*}V\\
V@>{F^t}>> V
\end{CD}
\]

Let $|\Omega|$ be the $1$-density line bundle  of the dual of the vector bundle
\begin{equation}\label{eq:Tperp}
\ker D\pi_B=\{ (b,v)\in TB\mid (D\pi_B)_b(v)=0\}
\end{equation}
and put
\[
\hat{W}=\hat{V}\otimes |\Omega|, \quad
\hat{W}^*=\hat{V}^*\otimes \pi_B^*|\Det^*|,\quad
W=V,\quad W^*=V^*\otimes |\Det^*|.
\]
Then we consider the pairings
 \begin{align*}
& \langle \cdot, \cdot\rangle : \Gamma^\infty_{0}(\hat{W})\times \Gamma^\infty(\hat{W}^*), \quad \langle u,v\rangle =\int_B u v\\
& \langle \cdot, \cdot\rangle : \Gamma^\infty_{cu}(W)\times \Gamma^\infty_{cu}(W^*), \quad \langle u,v\rangle =\int_M u v.
 \end{align*}
With these pairing, we regard the dual spaces of $\Gamma^\infty(\hat{W}^*)$ and $\Gamma^\infty_{cu}({W}^*)$ as the spaces of distributional sections of $\hat{W}$ and $W$ and write  
$\Gamma^{-\infty}(\hat{W})$ and $\Gamma^{-\infty}(W^*)$ for them respectively. The dual of $\iota^*$ in \eqref{eq:iotastar} is written
\begin{equation}\label{eq:iota_star}
\iota_*:\Gamma_{0}^{-\infty}(W)\to \Gamma_{0}^{-\infty}(\hat{W}).
\end{equation}
The semi-group $\hat{F}^t$ and the semi-flow $\hat{f}^t$ naturally induces the push-forward action 
\[
\hat{F}^t_W:=\hat{F}^t\otimes (D\hf^{-t})^*:\hat{W}\to \hat{W}.
\]
The semi-group of transfer operators corresponding to $\hat{F}^t_W:\hat{W}\to \hat{W}$ and $F^t:V\to V$ are denoted respectively by
\begin{align}\label{eq:hLtc}
&\hat{L}^t:\Gamma^\infty_{0}(\hat{W})\to \Gamma^\infty_{0}(\hat{W}), \quad \hat{L}^t u(b)=\hat{F}^t_W(u(\hat{f}^{-t}(b)))
\intertext{and}
&L^t:\Gamma^\infty_{cu}(W)\to \Gamma^\infty_{cu}(W) , \quad {L}^t u(x)={F}^t(u(f^{-t}(x))).
\label{eq:Ltc}
\end{align}
By using duality with respect to the pairing introduced above, we may and do extend the operator \eqref{eq:hLtc}  to the continuous operator 
\begin{equation}\label{eq:hLt_ext}
\hat{L}^t:\Gamma^{-\infty}_{0}(\hat{W})\to \Gamma^{-\infty}_{0}(\hat{W}).
\end{equation}
Then we have the commutative diagram 
\begin{equation}\label{cd:gwc}
\begin{CD}
\Gamma^{-\infty}_{0}(\hat{W})@>{\hat{L}^t}>>\Gamma^{-\infty}_{0}(\hat{W})\\
@A{\iota_*}AA @A{\iota_*}AA\\
\Gamma^{\infty}_{cu}(W)@>{L^t}>>\Gamma^{\infty}_{cu}(W)
\end{CD}
\end{equation}
\begin{Remark}\label{rem:model}
The semi-flow $\hat{f}^t$ is a model of $f^t_k:\mathcal{U}\to \mathcal{U}$ in the last section.
The vector bundles $W$ and $|\Omega|$ correspond to $\pi^*_{\ell,0}\hL\otimes \pi^*_{\ell,0}(V^*_G)^{\wedge k}$ and $|\Omega_\ell|$ respectively.  Then the other correspondences should be clear.  
\end{Remark}

\subsection{The definition of the anisotropic Sobolev space} \label{ss:def_aniso}
We now gives the definition of the anisotropic Sobolev space. For more details, we refer \cite{FaureSjostrand11}.
Let $\mathbf{P}(T^*B)$ is the projectivized cotangent bundle of $B$.  
First we take and fix a $C^\infty$ function 
\[
\mu:\mathbf{P}(T^*B)\to [s,r]
\]
such that, for some constant $K>1$, 
\[
\mu([(b,\xi)])=
\begin{cases}
0& \text{if $\max\{\|\xi_0\|,\|\xi_u\|\}\le \|\xi_s\|/K$};\\
1/2& \text{if $\max\{\|\xi_s\|,\|\xi_u\|\}\le \|\xi_0\|/K$};\\
1& \text{if $\max\{\|\xi_0\|,\|\xi_s\|\}\le \|\xi_u\|/K$};\\
\end{cases}
\]
where $\xi=\xi_0+\xi_s+\xi_u$ with $\xi_0\in \hat{E}_0^*(b)$, $\xi_s\in \hat{E}_s^*(b)$ and $\xi_u\in \hat{E}_u^*(b)$ and that
\begin{equation}\label{eq:wrsdec}
\mu([(D\hf^{-t})^*(b,\xi)])\ge \mu([(b,\xi)])\quad \text{for $t\ge 0$}. 
\end{equation}
For $s<0<r$,  we define 
\[
\weight^{s,r}:T^*B\to \real_+,\quad \weight^{s,r}(b,\xi)=
\left\langle \|\xi\|/\langle \xi_0\rangle \right\rangle^{r+(s-r)\mu([(b,\xi)])} .
\]
Then, from \eqref{eq:wrsdec} and  hyperbolic property of the flow $f^t$, it follows
\begin{align}
&\weight^{s,r}((D\hf^{-t})^*(b,\xi))\le C\cdot \weight^{s,r}(b,\xi)\quad\text{for $(b,\xi)\in T^*B$.}
\label{eq:wsr1}
\intertext{Further it holds}
&\weight^{s,r}((D\hf^{-t})^*(b,\xi))\le \exp(-\min\{|s|, r\}\cdot \chi t) \cdot {\weight}^{s,r}(b,\xi)
\label{eq:wsr2}
\end{align}
provided that $t>0$ and the ratio $\|\xi\|/\langle \xi_0\rangle$ is sufficiently large. 
\begin{Remark} Similarly to \eqref{eq:wsr1} but a little more precisely, we have that, for any constant $0<\theta\le 1$, 
\begin{equation}\label{eq:wsrmotive2}
\weight^{s,r}(\xi)\le \exp(-\min\{|s|\theta, r\}\cdot \chi t/2) \cdot \weight^{s,r}(b,\xi)
\end{equation}
for  $(b,\xi)\in T^*B$ satisfying $\|\xi\|\ge \exp(\theta \chi t/2)\langle|\xi_0|\rangle$, provided that $t$ is sufficiently large.  
\end{Remark}

\begin{Definition}
We define the anisotropic Sobolev space $\mathcal{H}^{s,r}(B)$ as the completion of the space $C^\infty_{0}(B)$ with respect to  the norm 
\[
\|u\|_{\mathcal{H}^{s,r}}:=\|\mathrm{Op}(\weight^{s,r})u\|_{L^2}.
\]
where $\mathrm{Op}(\weight^{s,r})$ denotes the pseudo-differential operator with the symbol $\weight^{s,r}$. 
Then we  set 
\[
\mathcal{H}^{s,r}(\hat{W})=\mathcal{H}^{s,r}(B)\otimes _{C^\infty(M)}\Gamma^\infty(\hat{W}).
\]
\end{Definition}

\subsection{A few propositions related to the anisotropic Sobolev space}
The next proposition is basically  a consequence of the property \eqref{eq:wsr2} of the weight function  $\weight^{s,r}$. 

\begin{Proposition}[\cite{FaureSjostrand11}]\label{th:standard} For any $s<0<r$, the transfer operator $\hat{L}^t$ for $t\ge 0$ restricts to a strongly continuous one-parameter semi-group of bounded operators 
\begin{equation}\label{eq:LT}
\hat{L}^t: \mathcal{H}^{s,r}(\hat{W})\to \mathcal{H}^{s,r}(\hat{W}).
\end{equation}
There exists a constant $C>0$ independent of the parameters $s$ and $r$ such that the spectral set of the  generator $A$ of $\hat{L}^t$ on the region 
\begin{equation}\label{eq:regionrs}
\Re(s)> -\min\{|s|, r\}\chi+C
\end{equation}
consists of discrete eigenvalues with finite multiplicity.  
\end{Proposition}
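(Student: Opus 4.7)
The proposition is the abstract framework that \cite{FaureSjostrand11} developed for hyperbolic flows applied to our topologically trivial hyperbolic extension $\hat{f}^t$. The plan is to conjugate $\hat{L}^t$ by the symbol $\weight^{s,r}$, translating the question into an $L^2$ operator-norm estimate on a Fourier integral operator whose canonical transformation is the graph of $(D\hat{f}^{-t})^*$, and then exploit \eqref{eq:wsr1}--\eqref{eq:wsr2} to bound the symbol. The fiberwise vector-bundle structure of $\hat{W}$ contributes only matrix-valued amplitudes, so I will carry the scalar argument; the bundle case is identical up to notation.

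\textbf{Step 1: boundedness and strong continuity.} Set $P^t := \mathrm{Op}(\weight^{s,r})\, \hat{L}^t\, \mathrm{Op}(\weight^{s,r})^{-1}$, which is intertwined with $\hat{L}^t\colon \mathcal{H}^{s,r}(\hat{W})\to\mathcal{H}^{s,r}(\hat{W})$ by the isometry $\mathrm{Op}(\weight^{s,r})\colon \mathcal{H}^{s,r}\to L^2$. Locally, $\hat{L}^t$ is a Fourier integral operator with canonical map $(D\hat{f}^{-t})^*$, and an Egorov-type computation in the anisotropic symbol class adapted to $\mu$ shows that, modulo a smoothing remainder, $P^t$ is a pseudodifferential operator whose principal symbol is
\[
a^t(b,\xi) = \frac{\weight^{s,r}(b,\xi)}{\weight^{s,r}\bigl((D\hat{f}^{-t})^*(b,\xi)\bigr)} \cdot J^t(b,\xi),
\]
where $J^t$ collects the Jacobian and bundle factors (uniformly bounded in $(b,\xi)$ for fixed $t$, and continuous in $t$). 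By \eqref{eq:wsr1}, $|a^t|\le C$ uniformly, so Calder\'on--Vaillancourt yields boundedness of $P^t$, hence of $\hat{L}^t$ on $\mathcal{H}^{s,r}(\hat{W})$. Strong continuity is inherited from the continuous dependence of $a^t$ on $t$ and the density of $C^\infty_0$-sections.

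\textbf{Step 2: decomposition into a compact part and an exponentially small part.} Fix $t>0$ and $\theta\in(0,1]$. Pick a smooth phase-space cutoff $\chi_R(b,\xi)$ equal to $1$ on $\{\|\xi\|\le R\langle\xi_0\rangle\}$ and supported in $\{\|\xi\|\le 2R\langle\xi_0\rangle\}$, with $R=\exp(\theta\chi t/2)$. Write
\[
\hat{L}^t = \hat{L}^t\,\mathrm{Op}(\chi_R) + \hat{L}^t\,\mathrm{Op}(1-\chi_R) =: K^t + R^t.
\]
The operator $\mathrm{Op}(\chi_R)$ is compactly supported in fiber frequencies transverse to $\hat{E}_0^*$, hence gains regularity in those directions; by the anisotropic Rellich-type compactness result (standard in this framework), $K^t\colon\mathcal{H}^{s,r}(\hat{W})\to\mathcal{H}^{s,r}(\hat{W})$ is compact. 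For $R^t$, the same symbolic computation as in Step~1, now restricted to $\|\xi\|\ge R\langle\xi_0\rangle$, combined with the refined estimate \eqref{eq:wsrmotive2}, gives
\[
\|R^t\|_{\mathcal{H}^{s,r}\to\mathcal{H}^{s,r}} \le C_1\exp\bigl(-\min\{|s|\theta,r\}\chi t/2\bigr).
\]

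\textbf{Step 3: conclusion.} Letting $\theta\to 1$ and applying Nussbaum's formula for the essential spectral radius to the decomposition $\hat{L}^t=K^t+R^t$ yields
\[
r_{\mathrm{ess}}\bigl(\hat{L}^t\mid \mathcal{H}^{s,r}\bigr) \le \exp\bigl((-\min\{|s|,r\}\chi + C)t\bigr)
\]
for a constant $C$ independent of $s,r$. By the spectral mapping theorem on the discrete (i.e., non-essential) part of the spectrum of a strongly continuous semigroup, the spectrum of the generator $A$ in the half-plane $\{\Re(z)>-\min\{|s|,r\}\chi + C\}$ is purely discrete of finite multiplicity.

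\textbf{Main obstacle.} All the analytic work is concentrated in making the pseudodifferential calculus rigorous with the non-smooth escape function $\mu$ and, more seriously, with the canonical map $(D\hat{f}^{-t})^*$ which is only H\"older--Lipschitz in directions transverse to the attractor $\mathcal{A}$ (since $\hat{E}_u$ outside $\mathcal{A}$ is not canonically defined). The author's strategy in the construction of $B$ (and its higher Jet-space avatars $\bbJ^\ell$) is precisely arranged so that these non-smooth objects on $M$ lift to genuinely smooth ones on the extension, which is why this proposition can be quoted from \cite{FaureSjostrand11} without modification.
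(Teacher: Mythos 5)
The paper does not prove this proposition; it quotes it from \cite{FaureSjostrand11} with the single remark that it is ``basically a consequence of the property \eqref{eq:wsr2}.'' Your Steps 1--3 are an accurate recapitulation of the standard Faure--Sj\"ostrand argument (conjugation by $\mathrm{Op}(\weight^{s,r})$, Egorov, decomposition into a compact part plus a part with small norm governed by \eqref{eq:wsrmotive2}, Nussbaum's formula, and the spectral mapping theorem for semigroups), which is what the cited reference does, so in substance you are reproducing the argument being invoked.

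One conceptual slip in your closing ``Main obstacle'' paragraph is worth flagging. You write that the canonical map $(D\hat{f}^{-t})^*$ is ``only H\"older--Lipschitz in directions transverse to $\mathcal{A}$.'' That is not right: $\hat{f}^t\colon B\to B$ is assumed $C^\infty$ in the paper's setup of Subsection~\ref{ss:aniso}, so $(D\hat{f}^{-t})^*$ is genuinely smooth. What is \emph{not} smooth are the invariant subbundles $\hat{E}_s$ and $\hat{E}_u$ (e.g.\ $\hat{E}_s=D\pi_B^{-1}(E_s)$ inherits the mere H\"older regularity of $E_s$ on $M$), and these enter only in the construction of the escape function $\mu$. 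The resolution in \cite{FaureSjostrand11} is not that the subbundles become smooth on the extension, but that $\mu$ is required to be $C^\infty$ and only to respect conical neighborhoods (the constant $K>1$ in the paper's definition provides the slack needed to smooth), while the decay estimates \eqref{eq:wsr1}--\eqref{eq:wsr2} are driven by the hyperbolicity of the smooth flow $\hat{f}^t$. The role of the extension $B$ (and of $\bbJ^\ell$) is to replace the \emph{original} non-smooth transfer operator $\cL^t_k$ on $M$ — built from the H\"older line bundle $L$ and the H\"older dynamical connection — by the smooth operator $\hat{\cL}^t_{k,\ell}$, not to smooth the invariant splitting. With this corrected, your argument matches the cited one.
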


For the operator $\iota_*$ in \eqref{eq:iota_star}, we have
\begin{Proposition}\label{pp:diff} The operator $\iota_*$  restricts to a continuous operator  
\begin{equation}\label{eq:iota_ast}
\iota_*:\Gamma^\infty_{cu}(W)\to \mathcal{H}^{s,r}(\hat{W})\subset \Gamma^{-\infty}(\hat{W})
\end{equation}
provided that $s<0$ is sufficiently small according to $\dim B$. 
\end{Proposition}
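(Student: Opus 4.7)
The distribution $\iota_* u$ is supported on the attractor $\mathcal{A}=\iota(M)$, a submanifold of codimension $d^\perp$ in $B$, with smooth factor coming from $u$ along $\mathcal{A}$. Its wavefront set is contained in $N^*\mathcal{A}\setminus 0$, so the membership in $\mathcal{H}^{s,r}(\hat W)$ is governed by the behavior of the anisotropic weight $\weight^{s,r}$ on a conic neighborhood of $N^*\mathcal{A}$. The first (geometric) step is to show $N^*\mathcal{A}\subset \hat E_u^*$: differentiating the intertwining $\hat f^t\circ\iota=\iota\circ f^t$ yields $D\hat f^t\circ D\iota = D\iota\circ Df^t$, and the dynamical characterization of the hyperbolic splitting on $\mathcal{A}$ forces $D\iota(E_0)=\hat E_0$ and $D\iota(E_u)=\hat E_u$. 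Thus $T\mathcal{A}\supset \hat E_0\oplus \hat E_u$, and any covector annihilating $T\mathcal{A}$ lies in $(\hat E_0\oplus \hat E_u)^{\perp}=\hat E_u^*$ in the notation of the paper. By continuity of the splitting on a neighborhood of $\mathcal{A}$, there is a conic neighborhood $\Gamma\subset T^*B$ of $N^*\mathcal{A}\setminus 0$ on which $\mu\equiv 1$, so that $\weight^{s,r}(b,\xi)\lesssim \langle\|\xi\|\rangle^{s}$.

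The second step is a local $L^2$ estimate. Cover a neighborhood of $\mathcal{A}$ by finitely many charts $(x,y)\in\real^d\times\real^{d^\perp}$ with $\mathcal{A}=\{y=0\}$ and $dy$ pointing into (the chosen extension of) $\hat E_u^*$. After trivialization of $\hat W$ and $|\Omega|$, $\iota_* u$ is locally represented by $u(x)\delta(y)$ times a smooth nonvanishing factor, whose Fourier transform is $\hat u(\xi_x)$, constant in $\xi_y$. Using the pseudodifferential calculus of \cite{FaureSjostrand11} and a microlocal partition of unity subordinate to $\Gamma$ and its complement, split $\|\iota_* u\|_{\mathcal H^{s,r}(\hat W)}^2$ into two contributions. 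In $\Gamma$, since $\|\xi\|\ge|\xi_y|$ and $s<0$, we have $\weight^{s,r}(b,\xi)\lesssim \langle|\xi_y|\rangle^{s}$, so Plancherel gives
\[
\int_{\Gamma}\weight^{s,r}(b,\xi)^{2}\,|\hat u(\xi_x)|^{2}\,d\xi_x\,d\xi_y \;\lesssim\; \|u\|_{L^2(M)}^{2}\cdot \int_{\real^{d^\perp}}\langle|\xi_y|\rangle^{2s}\,d\xi_y,
\]
finite provided $2s+d^\perp<0$, i.e.\ $s<-d^\perp/2$. Outside $\Gamma$, $\xi$ has a significant component in $\hat E_0^*$ or $\hat E_s^*$, which correspond to the flow-direction and unstable-direction Fourier variables in $\hat u$; the smoothness of $u$ along the center-unstable leaves (formalized by the parametrizations $\mathbf{W}^{cu}(p)$ of Subsection \ref{ss:remwu}) yields rapid Fourier decay of $\hat u$ in these variables, which absorbs the at-worst polynomial growth $\weight^{s,r}(\xi)\lesssim \langle\|\xi\|\rangle^{r}$.

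Summing the local contributions and using a continuous seminorm on $\Gamma^\infty_{cu}(W)$ that controls sufficiently many cu-derivatives of $u$ yields the continuity of $\iota_*:\Gamma^\infty_{cu}(W)\to \mathcal H^{s,r}(\hat W)$ for all $s<-d^\perp/2$, giving the claimed smallness of $s$ relative to $\dim B$. The main technical obstacle is that $\weight^{s,r}$ is not a classical polynomially bounded symbol, so the microlocal splitting must be carried out rigorously via the calculus of \cite{FaureSjostrand11}; moreover, the center-unstable foliation is only continuous, so transferring the notion of ``smoothness along cu-leaves'' provided by $\Gamma^\infty_{cu}$ into rapid Fourier decay of $\hat u$ in coordinate variables has to be done through the local parametrizations $\mathbf{W}^{cu}(p)$, uniformly across a finite atlas covering $\mathcal{A}$.
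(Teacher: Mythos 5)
Your strategy — a single global microlocal estimate in a chart, split by a conic region $\Gamma$ — is different from the paper's, which first treats a smooth compactly supported section $\mu$ on a \emph{single} local center-unstable leaf $W^{cu}_{\loc}(p)$, observes that $\iota_*\mu$ is a conormal distribution on the corresponding cu-leaf in $B$ with wavefront set in $\hat E_u^*$ and hence lies in $\mathcal{H}^{s,r}(\hat W)$ for $s$ small, and then writes a general $u\in\Gamma^\infty_{cu}(W)$ as an integral of such $\mu$'s with uniformly bounded norms (using the continuous parametrization $\mathbf{W}^{cu}$ of Subsection \ref{ss:remwu}). Both routes are microlocal in spirit; the paper's decomposition into single leaves is precisely what lets one say cleanly that the wavefront set sits in $\hat E_u^*$ without ever invoking smoothness of $u$ transverse to the cu-foliation.

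That said, there is a genuine gap in your version. First, your opening assertion that $\mathrm{WF}(\iota_*u)\subset N^*\mathcal{A}$ is false: $u\in\Gamma^\infty_{cu}(W)$ is only continuous transverse to the cu-leaves, so $\iota_* u$ also has singularities tangent to $\mathcal{A}$ along the stable direction $D\iota(E_s)$. The correct containment is $\mathrm{WF}(\iota_*u)\subset \hat E_u^*$, which strictly contains $N^*\mathcal{A}$ (note $\dim N^*\mathcal{A}=d^\perp$ while $\dim\hat E_u^*=d_s+d^\perp$). Second — and this is what actually bites — your conic set $\Gamma$ is a neighborhood of $N^*\mathcal{A}$ only, and your claim that ``outside $\Gamma$, $\xi$ has a significant component in $\hat E_0^*$ or $\hat E_s^*$'' is wrong. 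A covector with large $\xi_{x_s}$-component (the stable tangential direction $D\iota(E_s)^*$) and small $\xi_0,\xi_{x_u},\xi_y$ lies outside your $\Gamma$ yet belongs to $\hat E_u^*$, not to $\hat E_0^*\oplus\hat E_s^*$. In that region $\hat u$ has \emph{no} rapid decay, so the ``rapid Fourier decay absorbs $\langle\xi\rangle^{r}$'' argument breaks down. The estimate can still be salvaged because on that region $\mu\equiv 1$ and $\weight^{s,r}$ decays like $\langle\xi\rangle^{s}$, so you do not need decay of $\hat u$ — but you have to take $\Gamma$ to be a conic neighborhood of all of $\hat E_u^*$ (the conormal to the cu-leaves), not merely of $N^*\mathcal{A}$. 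With that correction the region complementary to $\Gamma$ is genuinely where $\xi_0$ or $\xi_{x_u}$ dominates and the cu-smoothness of $u$ gives the needed decay. As written, the case analysis is incomplete.
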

\begin{proof} 
Let $\mu$ be a $C^\infty$ section of $W$ along an local center-unstable manifold $W^{cu}_{\loc}(p)$ with compact support. Then we define $\iota_*\mu$ as the distributional section of $\hat{W}$ defined by 
\[
\iota_*\mu(\varphi)=\int_{W^{cu}_{\loc}(p)} \langle \mu(x), \iota^*\varphi(x) \rangle\; dx \quad \text{for any $C^\infty$ section $\varphi$ of $\hat{W}^*$.}
\]
This is a distributional section of $\hat{W}$ that concentrate on the local center-unstable manifold $W^{cu}_{\loc}(p')$ of the unique point $p'\in \att$ that projects to $p$. Its wavefront set is contained in the normal bundle of the local center-unstable manifold and contained in the subbundle $E_u^*$. Hence, from the definition of the weight function $\mathcal{W}^{s,r}$, we see that any of such distributional sections belongs to $\mathcal{H}^{s,r}(\hat{W})$ provided that $s<0$ is sufficiently small according to $\dim B$. The image of \eqref{eq:iota_ast} is written as an integration of such measures (with uniformly bounded norm in $\mathcal{H}^{s,r}(\hat{W})$). Thus, recalling the remark given in Subsection \ref{ss:remwu}, we obtain the claim of the proposition. \end{proof}

\begin{Proposition}\label{pp:diffop}
Any first order differential operator $D:\Gamma^{\infty}_{0}(\hat{W})\to \Gamma^{\infty}_{0}(\hat{W})$ with $C^\infty$ coefficients  extends to a bounded operator 
\[
D:\mathcal{H}^{s,r}(\hat{W})\to \mathcal{H}^{s-1,r-1}(\hat{W})\quad\text{for $s<0<1<r$. }
\]
\end{Proposition}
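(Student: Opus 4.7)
The plan is to reduce the claim to the anisotropic pseudo-differential calculus of Faure--Sj\"ostrand used to define the spaces $\mathcal{H}^{s,r}(\hat W)$. First I would choose a finite atlas of coordinate charts on $B$ trivializing $\hat W$ together with a subordinate partition of unity. In each chart, $D$ takes the form $D_0+\sum_i A_i\,\partial_{x_i}$, where $D_0$ and $A_i$ are smooth endomorphism-valued functions with bounded derivatives. Multiplication by such a function is a zeroth-order pseudo-differential operator, hence bounded on every space $\mathcal H^{s,r}(\hat W)$. Combined with the pointwise inequality $\weight^{s-1,r-1}\le \weight^{s,r}$ (the two exponents differ by exactly $1$ and the base $\langle\|\xi\|/\langle\xi_0\rangle\rangle\ge 1$), this yields a continuous inclusion $\mathcal H^{s,r}(\hat W)\hookrightarrow \mathcal H^{s-1,r-1}(\hat W)$ and reduces the proposition to bounding a single smooth vector field $X$, acting componentwise in a local trivialization, as a map $\mathcal H^{s,r}\to \mathcal H^{s-1,r-1}$.

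Next, by the definition of the norms, the sought boundedness is equivalent to $L^2$-boundedness of the conjugated operator
\[
T_X\;=\;\mathrm{Op}(\weight^{s-1,r-1})\circ X\circ \mathrm{Op}(\weight^{s,r})^{-1}.
\]
The symbolic composition formulas of the pseudo-differential calculus identify $T_X$ as a pseudo-differential operator whose principal symbol equals
\[
\sigma_0(T_X)(b,\xi)\;=\;\frac{\weight^{s-1,r-1}(b,\xi)}{\weight^{s,r}(b,\xi)}\cdot \sigma(X)(b,\xi)\;=\;\frac{\sigma(X)(b,\xi)}{\langle \|\xi\|/\langle\xi_0\rangle\rangle},
\]
the key observation being that the exponents of the two weight functions differ by exactly one. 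The desired $L^2$-boundedness then follows from a Calder\'on--Vaillancourt-type estimate, once one verifies that $\sigma_0(T_X)$ together with all its derivatives lies in a bounded symbol class, and that the lower-order remainders produced by the composition are indeed of order $\le 0$ in the same class.

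The main obstacle is precisely this verification inside the anisotropic calculus. The weight $\weight^{s,r}$ is built from a direction-dependent exponent $r+(s-r)\mu$ where $\mu$ is only smooth on the projective cotangent bundle $\mathbf P(T^*B)$; derivatives of $\weight^{s,r}$ mix radial and angular contributions and must be controlled within the symbol class adapted to $\mu$. The hard part will be to show that this class is closed under the composition of pseudo-differential operators, that the remainders gain one order, and that the combined symbol $\sigma(X)/\langle\|\xi\|/\langle\xi_0\rangle\rangle$ satisfies all required derivative bounds --- using $|\sigma(X)|\le C(1+\|\xi\|)$ together with a split into the regimes $\|\xi\|\lesssim \langle\xi_0\rangle$ and $\|\xi\|\gg\langle\xi_0\rangle$. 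Each of these points falls within the scope of the calculus developed in \cite{FaureSjostrand11}, so the proposition should follow by a direct application of that framework to the explicit symbol quotient above.
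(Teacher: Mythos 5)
Your plan — localize to charts, separate the zeroth-order part, conjugate the remaining vector field $X$ by $\mathrm{Op}(\weight^{s-1,r-1})$ and $\mathrm{Op}(\weight^{s,r})^{-1}$, and reduce to a Calder\'on--Vaillancourt bound for the conjugated operator with principal symbol $\sigma(X)/\langle\|\xi\|/\langle\xi_0\rangle\rangle$ — is exactly the ``composition formula / straightforward estimate using local charts'' that the paper alludes to, so you are following the intended route.

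The gap is in the step you defer. You assert that the quotient lies in a bounded symbol class ``using $|\sigma(X)|\le C(1+\|\xi\|)$ together with a split into the regimes $\|\xi\|\lesssim\langle\xi_0\rangle$ and $\|\xi\|\gg\langle\xi_0\rangle$,'' but neither regime actually yields a uniform bound. In the first regime $\langle\|\xi\|/\langle\xi_0\rangle\rangle\asymp 1$ while $|\sigma(X)|$ can be as large as $\|\xi\|\asymp\langle\xi_0\rangle$; in the second $\langle\|\xi\|/\langle\xi_0\rangle\rangle\asymp\|\xi\|/\langle\xi_0\rangle$, so the quotient is again only $O(\langle\xi_0\rangle)$. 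Thus the bound you obtain in either regime is $O(\langle\xi_0\rangle)$, not $O(1)$. To see this is not a technicality but a genuine obstruction, take $u=e^{iRx_0}e^{iR\,y_1}\phi$ in a local chart (with $x_0$ the flow coordinate, $y_1$ a fiber coordinate, $\phi$ a fixed bump): the Fourier transform of $u$ is concentrated near $\xi_0\approx R$, $\eta_1\approx R$, where $\weight^{s,r}\asymp 1\asymp\weight^{s-1,r-1}$, so $\|u\|_{\mathcal H^{s,r}}=O(1)$ while $\|\partial_{y_1}u\|_{\mathcal H^{s-1,r-1}}\asymp R\to\infty$. The verification you dismiss as ``falling within the scope of the calculus'' is therefore precisely where the argument breaks down, and your proof as written does not establish the claimed boundedness. (Note that this concern is tied to the normalizing factor $\langle\xi_0\rangle$ appearing inside the weight $\weight^{s,r}$; with a weight of the form $\langle\xi\rangle^{r+(s-r)\mu}$ in place of $\langle\|\xi\|/\langle\xi_0\rangle\rangle^{r+(s-r)\mu}$ the symbol quotient would indeed be bounded, so you should pin down which weight convention is actually in force before asserting the estimate.)
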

\begin{proof}
This proposition follows from the composition formula of pseudo-differential operators or a straightforward estimate using local charts. \end{proof}

\subsection{Dynamical Fredholm determinant}

For a periodic orbit $\po\in \PO$ of the flow $f^t$, we write  $\hat{\po}$ for the unique periodic orbit of $\hat{f}^t$ that projects down to $\po$ by $\pi_B:B\to M$. We take a point $p$ on $\hat{\po}$ and set 
\[
\hat{V}_\po:=F^{|\po|}(p):\hat{V}(p)\to \hat{V}(p),\quad 
|\Omega_{\po}|:=(D\hat{f}^{-|\po|}(p))^*:|\Omega|(p)\to |\Omega|(p).
\]
Then the dynamical Fredholm determinant of the semi-group of operators $L^t$ in  \eqref{eq:Ltc} is  computed  as  
\begin{align}
d(s)&=\exp\left(-\sum_{\po\in \PO} \sum_{n=1}^\infty 
\frac{e^{-sn|\po|}\cdot \mathrm{Tr}\, {V}_{\po}^n}{n\cdot |\det(\mathrm{Id}-P_\po^{-n})|} \right)
\label{eq:dsabs}
\end{align}
Let $\cH^{s,r}(W)$ be the completion of the space $\Gamma^\infty_{cu}(W)$ with respect to the pull-back of the norm on $\cH^{s,r}(\hat{W})$, so that $\iota_*:\cH^{s,r}(W)\to \cH^{s,r}(\hat{W})$ is an isometric embedding and we have the following commutative diagram that extends \eqref{cd:gwc}:
\begin{equation}\label{cd:gwc2}
\begin{CD}
\cH^{s,r}(\hat{W})@>{\hat{L}^t}>>\cH^{s,r}(\hat{W})\\
@A{\iota_*}AA @A{\iota_*}AA\\
\cH^{s,r}(W)@>{L^t}>>\cH^{s,r}(W)
\end{CD}
\end{equation}
From  Proposition \ref{th:standard}, we see that the generator of $L^t:\cH^{s,r}(W)\to \cH^{s,r}(W)$ as discrete spectrum on the region \eqref{eq:regionrs}. Then we have  
\begin{Proposition}\label{pp:lf2}
The dynamical Fredholm determinant ${d}(s)$ in \eqref{eq:dsabs} extends to a holomorphic function on the complex plane $\complex$ and its zeros coincide with the discrete eigenvalues of the generator of $L^t:\cH^{s,r}(W)\to \cH^{s,r}(W)$ on the region \eqref{eq:regionrs}. 
\end{Proposition}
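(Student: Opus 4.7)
The plan is to reduce the statement to its analogue for the smooth extended transfer operator $\hat{L}^t$ on $\cH^{s,r}(\hat{W})$ through the commutative diagram \eqref{cd:gwc2}, and then to account for the discrepancy between $d(s)$ and the extended determinant by a fiber Taylor expansion along $\ker D\pi_B$. First I would apply the Atiyah-Bott-Guillemin trace formula to $\hat{L}^t$, which is legitimate since $\hat{f}^t$ and the bundle maps $\hat{F}^t_W$ are smooth on $B$. Every periodic orbit of $\hat{f}^t$ lies on the attractor $\mathcal{A}=\iota(M)$ and projects bijectively to one of $f^t$ with the same period; since $\ker D\pi_B\subset \hat{E}_s$, the transversal Jacobian splits as $\hat{P}_{\hat{\po}} = P_\po\oplus Q_\po$ with $Q_\po := D\hat{f}^{|\po|}|_{\ker D\pi_B}$ strictly contracting. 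Using the identity $|\det(\Id-Q_\po^{-n})| = |\det Q_\po^{-n}|\cdot |\det(\Id-Q_\po^n)|$ to combine the density factor $|\Omega_\po|=|\det Q_\po|^{-1}$ with the expansive determinant in the denominator, I expect to obtain
\[
\Tr^\flat \hat{L}^t = \sum_{\po}\sum_{n\ge 1} \frac{|\po|\cdot \Tr V_\po^n}{|\det(\Id-P_\po^{-n})|\cdot |\det(\Id-Q_\po^n)|}\,\delta(t-n|\po|),
\]
giving the formal factorization $\hat{d}(s) = d(s)\cdot D(s)$, where $D(s)$ collects the fiber correction $|\det(\Id-Q_\po^n)|^{-1}$.

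Next, by Proposition \ref{th:standard} applied to $\hat{L}^t$ on $\cH^{s,r}(\hat{W})$, the operator is quasi-compact with generator $\hat{A}$ having discrete spectrum in the region \eqref{eq:regionrs}. Combined with the standard theory of continuous-time dynamical Fredholm determinants for quasi-compact transfer semi-groups (\cite[Appendix B]{Dyatlov-Zworski16}), this gives that $\hat{d}(s)$ extends to an entire function on $\complex$ whose zeros, with algebraic multiplicity, coincide with the discrete eigenvalues of $\hat{A}$ in that region. To transfer this to $d(s)$, I would then develop a Taylor decomposition along the fibers of $\pi_B$: since $\ker D\pi_B$ is exponentially contracted, $\cH^{s,r}(\hat{W})$ admits, at least spectrally, a decomposition by fiber Taylor modes $\bm{k}\in \mathbb{N}^{d^\perp}$, each preserved by $\hat{L}^t$ and conjugate to a transfer operator $L^t_{\bm{k}}$ on sections of $V\otimes \mathrm{Sym}^{\bm{k}}(\ker D\pi_B)^*$ over $M$. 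Since the hyperbolicity exponent on the $\bm{k}$-th mode is augmented by $|\bm{k}|$ times the fiber contraction rate, a mild generalization of Proposition \ref{th:standard} applies to every $L^t_{\bm{k}}$ with progressively larger regions of discreteness, yielding entire Fredholm determinants $d_{\bm{k}}(s)$ with $d_{\bm{0}}(s) = d(s)$, together with a convergent product factorization
\[
\hat{d}(s) = \prod_{\bm{k}\ge 0} d_{\bm{k}}(s),
\]
matching, at the level of periodic orbits, the expansion $|\det(\Id-Q_\po^n)|^{-1} = \sum_{\bm{k}} q_\po^{n\bm{k}}$. It follows that $d(s) = \hat{d}(s)/\prod_{\bm{k}\ne \bm{0}} d_{\bm{k}}(s)$ extends to an entire function, and that after the exact cancellations between the zeros of $\hat{d}(s)$ and those of the $d_{\bm{k}}(s)$, $\bm{k}\ne \bm{0}$, the remaining zeros of $d(s)$ in the region are precisely the eigenvalues of $\hat{A}$ localized to the zeroth fiber mode, which by the embedding $\iota_*$ coincide with the discrete eigenvalues of the generator of $L^t$ on $\cH^{s,r}(W)$.

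The main obstacle will be making this fiber Taylor decomposition rigorous and spectrally exact. The fibers of $\pi_B$ are not linear spaces, so the decomposition has to be formulated intrinsically via the jet-space formalism of Section \ref{sec:ext}, and one must control the anisotropic Sobolev norms under this decomposition while ruling out any ``spectral leakage'' between different modes that would spoil the bijection between zeros of $d_{\bm{k}}(s)$ and eigenvalues of $\hat{A}$ localized to the $\bm{k}$-th mode. This is precisely the ``fiber-wise cohomology argument'' alluded to in Remark \ref{rem:previous} and to be developed in Section \ref{sec:fiber}; I expect it to be the technical heart of the proof, with the remaining steps following the now-standard pattern for Fredholm determinants of Anosov transfer operators on anisotropic Sobolev spaces.
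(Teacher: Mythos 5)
Your proposal starts from the right place: embed $L^t$ in the smooth $\hat L^t$ via \eqref{cd:gwc2}, use the Atiyah--Bott/Guillemin trace formula and the factor $|\det(\Id-Q_\po^n)|^{-1}$ coming from the fiber contraction, and then exploit a Taylor expansion along the fibers of $\pi_B$. This is indeed the philosophy of Section~\ref{sec:fiber}. However, the way you organize the fiber degrees of freedom is genuinely different from the paper's, and as sketched the plan has a gap exactly where the paper's additional structure becomes essential.

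You expand $\det(\Id-Q_\po^n)^{-1}=\sum_{\bm k}\Tr\,\mathrm{Sym}^{\bm k}(Q_\po^n)$ and posit an \emph{infinite} product factorization $\hat d(s)=\prod_{\bm k}d_{\bm k}(s)$ over symmetric-power modes, each mode individually invariant under $\hat L^t$. The paper instead uses the dual identity $\det(\Id-(P^\perp_\po)^{-m})=\sum_\nu(-1)^\nu\Tr((P^\perp_\po)^{-m})^{\wedge\nu}$ and a \emph{finite alternating} product $d(s)=\prod_{\nu=0}^{d^\perp}d_\nu(s)^{(-1)^{d^\perp-\nu}}$ over exterior-power bundles $\hat W_\nu$, in which each $d_\nu$ is entire by Proposition~\ref{pp:dynfdspec} because $\hat L^t_\nu$ is a smooth hyperbolic transfer operator on a smooth bundle; this gives meromorphy of $d(s)$ at once, without an infinite-product convergence issue. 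There are then two obstacles your sketch does not clear. First, the fiber Taylor modes are \emph{not} $\hat L^t$-invariant subspaces: because the flat connection $H$ used to linearize the fibers is not $\hat F^t$-invariant (the diagram \eqref{cd:noncom} does not commute), the transfer operator is only \emph{upper triangular} with respect to the Taylor filtration $\mathbf D_{\nu,0}\subset\mathbf D_{\nu,1}\subset\cdots$, so the factorization $\hat d(s)=\prod_{\bm k}d_{\bm k}(s)$ is not spectrally exact as you assume but has to be extracted from a filtration-and-quotient argument. Second, to show that the poles introduced by $\prod_{\bm k\ne 0}d_{\bm k}(s)^{-1}$ are cancelled one needs an actual exactness statement, and this is where the exterior-algebra structure is crucial: the paper introduces the fiber-wise exterior derivative $\der^\perp_\nu$ and a Koszul-type adjoint $\der^*_\nu$ and uses the homotopy identity $\der^*_{\nu+1}\der^\perp_\nu+\der^\perp_{\nu-1}\der^*_\nu=-(q+d^\perp-\nu)\Id$ on $\Omega_{\nu,q}$ (Lemma~\ref{lem:exact}, Proposition~\ref{pp:deltastar}) together with the approximation statement Proposition~\ref{prop:approx} (generalized eigenspaces lie in the finite jet spaces $\mathbf D_{\nu,q_0}$) to prove exactness of the eigenspace sequence \eqref{eq:exactseq}. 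A pure symmetric-power/Taylor-mode decomposition carries no differential, hence no exact sequence, and so by itself cannot produce the cancellation; the crux you label ``spectral leakage'' is really the absence of a Koszul/Poincar\'e lemma in your framework. So: right starting point, right identification of the trace formula and the Taylor-jet role, but the cancellation mechanism needs the fiber-wise de Rham complex with the Koszul homotopy, not a mode-by-mode factorization.
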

The proof of this proposition will be given in the last section of this paper. Note that, 
though the claim of the proposition is natural and parallel to the last proposition, it does not follow from the existing argument in the literature (to the best of the author's understanding) because $L^t:\cH^{s,r}(W)\to \cH^{s,r}(W)$ is not smooth. We prove Proposition \ref{pp:lf2} in the last section of this paper, developing a cohomological argument along the fibers of the fibration $\pi_B:B\to M$.


\subsection{Proof of Theorem \ref{th:main}}
We now give the proof the main theorem, Theorem \ref{th:main}. 
Recall the setting that we discussed in Section \ref{sec:ext} and check
\begin{enumerate}
\item  The tubular neighborhood $\mathcal{U}_{d_u-k}\subset \bbJ^{d_u-k}$ of the attractor $\mathcal{A}_{d_u-k}$ is a fibration over the manifold $M$ whose fibers are diffeomorphic to the disk of dimension $d^\perp_k=\dim \bbJ^{d_u-k}-\dim M$. 
\item $\hat{f}^t_{d_u-k}:\mathcal{U}_{d_u-k}\to \mathcal{U}_{d_u-k}$ is a $C^\infty$ semi-flow satisfying $\pi_{d_u-k}\circ \hat{f}^t_{d_u-k}=f^t\circ \pi_{d_u-k}$ and is exponentially contracting on the fibers. 
\item The vector bundle  $\hat{V}_k$ is a $C^\infty$ vector bundle over $\mathcal{U}_{d_u-k}$, which is written in the form 
\[
\hat{V}_k=\hat{V}\otimes |\Omega|
\]
with setting $\hat{V}_k=\pi^*_{d_u-k,0}(\hL)\otimes  \pi^*_{d_u-k,0}(V_G^*)^{\wedge k}$ and $|\Omega|=|\Omega_{d_u-k}|$.
\item $F^t_{k}:\hat{V}_k\to\hat{V}_k$ is a $C^\infty$ vector bundle map over the semi-flow $\hat{f}^t_{d_u-k}$ and induces the one-parameter semi-group of transfer operators $\hat{\cL}^t_{k}:\Gamma^\infty(\hat{V}_k)\to \Gamma^\infty(\hat{V}_k)$.
\end{enumerate}
Hence we can get the following theorem  as an application of  the argument in the previous subsections.
\begin{Theorem}\label{th:main_lift0} Suppose that $0\le k\le d_u$ and $s<0<r$.  The transfer operator $\hat{\cL}^t_k: \Gamma^{-\infty}(\hat{V}_k)\to \Gamma^{-\infty}(\hat{V}_k)$ restricts to a strongly continuous family of bounded operators $\hat{\cL}^t_k: \mathcal{H}^{s,r}(\hat{V}_k)\to \mathcal{H}^{s,r}(\hat{V}_k)$. 
Its generator ${A}_k$ has discrete spectrum on the region 
\[
R_{s,r}:=\{s\in \complex\mid \Re(s)> -\min\{|s|, r\}\cdot  \chi +C\}
\]
for some $C>0$ independent of $s$ and $r$. The exterior derivative operator $\hder_k^u:\Gamma^{-\infty}(\hat{V}_{k})\to \Gamma^{-\infty}(\hat{V}_{k+1})$ for $0\le k< d_u$ restricts to a bounded operator
\[
\hder_k^u:\mathcal{H}^{s,r}(\hat{V}_k)\to \mathcal{H}^{s',r'}(\hat{V}_{k+1})
\]
provided that $s'\le s-1$ and $0<r'\le r-1$. 
\end{Theorem}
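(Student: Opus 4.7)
The plan is to deduce Theorem \ref{th:main_lift0} as an application of Propositions \ref{th:standard} and \ref{pp:diffop} to the concrete setup of Section \ref{sec:ext}. First, items (1)--(4) preceding the theorem verify precisely the hypotheses of Subsection \ref{ss:aniso}, with the correspondence $B=\mathcal{U}_{d_u-k}$, $\hf^t=\hf^t_{d_u-k}$, and $\hV=\pi^*_{d_u-k,0}(\hL)\otimes\pi^*_{d_u-k,0}(V_G^*)^{\wedge k}$, so that $\hV_k=\hV\otimes|\Omega|$ with $|\Omega|=|\Omega_{d_u-k}|$. Proposition \ref{th:standard} then directly yields parts (1) and (2) of the theorem: $\hcL^t_k$ extends to a strongly continuous semi-group on $\cH^{s,r}(\hV_k)$ whose generator $A_k$ has discrete spectrum on $R_{s,r}$, with the constant $C>0$ being the same universal constant of Proposition \ref{th:standard}.

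For the exterior derivative operator, the key claim is that $\hder^u_k$ is a first-order differential operator with $C^\infty$ coefficients. This is seen by inspecting the three-step construction of $\hder^u_{\dag,k}$ in Section \ref{sec:ext}: the first two steps produce, for each $\bsigma\in\bbJ^{k+1}$, a germ $\Psi_\bsigma$ and its pullback of the input section, both depending smoothly on the $(k+1)$-jet $\bsigma$; the third step then applies a \emph{single} exterior derivative at the origin (trivializing $\bs_G^*(\frL_\otimes)$ by the smooth flat connection encoded in $\bh$). Since $\hder^u_{\dag,k}$ is thus a first-order differential operator with $C^\infty$ coefficients between $C^\infty$ vector bundles, so is its dual $\hder^u_k$. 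A straightforward variant of Proposition \ref{pp:diffop} then gives a bounded extension $\hder^u_k:\cH^{s,r}(\hV_k)\to\cH^{s-1,r-1}(\hV_{k+1})$, and the conclusion for general $s'\le s-1$, $0<r'\le r-1$ follows from the continuous inclusion $\cH^{s-1,r-1}\hookrightarrow\cH^{s',r'}$.

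The main obstacle is the bookkeeping caused by the fact that $\hV_k$ and $\hV_{k+1}$ are defined on \emph{different} base spaces, namely $\bbJ^{d_u-k}$ and $\bbJ^{d_u-k-1}$ respectively (via the convention in \eqref{eq:VV}). Consequently, $\hder^u_k$ is best interpreted as a composition of differentiation with a pullback/pushforward along the $C^\infty$ submersion $\pi_{d_u-k,d_u-k-1}:\bbJ^{d_u-k}\to\bbJ^{d_u-k-1}$, and one must extend Proposition \ref{pp:diffop} (stated for a single bundle on a fixed space) accordingly. The required compatibility is built into the jet-bundle tower: the splittings $T^*\bbJ^\ell=\hat{E}_0^*\oplus\hat{E}_s^*\oplus\hat{E}_u^*$ used to define the weight $\weight^{s,r}$ are coherent across levels since the semi-flow $\hf^t_\ell$ projects to $\hf^t_{\ell-1}$, so the weight functions on adjacent levels are intertwined up to bounded factors under $\pi_{d_u-k,d_u-k-1}$. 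Modulo verifying this, the estimate is a routine application of pseudo-differential calculus on the anisotropic Sobolev spaces, completing the proof of part (3).
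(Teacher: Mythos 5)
Your proposal is correct and follows essentially the same route as the paper: items (1)--(4) reduce the first two claims to Proposition \ref{th:standard}, and for the third claim one observes that $\hder^u_k$ decomposes into a first-order differential operator with $C^\infty$ coefficients followed by the push-forward along a projection in the jet tower \eqref{eq:Jseq}, handled by Proposition \ref{pp:diffop} together with the compatibility of the anisotropic weights across levels. The one wrinkle in your write-up is the initial sentence claiming $\hder^u_k$ is itself a first-order differential operator between $C^\infty$ bundles; since $\hat{V}_k$ and $\hat{V}_{k+1}$ live on $\bbJ^{d_u-k}$ and $\bbJ^{d_u-k-1}$, this cannot be literally true, and your next paragraph correctly retracts it by exhibiting the composition with the fiber-wise push-forward along $\pi_{d_u-k,d_u-k-1}$ --- which is exactly the decomposition the paper uses (and records in a closing lemma). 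Your remark about intertwining of the weight functions $\weight^{s,r}$ across adjacent levels is the content the paper dispatches as ``a simple fact on the latter operator''; making that explicit is a genuine (if modest) improvement in the exposition.
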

\begin{proof} The former claims on $\hat{\cL}^t_k$ is a consequence of Proposition \ref{th:standard}. To see the latter claim on the exterior derivative operator $\hder_k^u$, we note that it is written as a composition of a first order differential operator with $C^\infty$ coefficient and the push-forward operator by a projection in \eqref{eq:Jseq}. Then Proposition \ref{pp:diffop} (and a simple fact on the latter  operator) gives the claim. 
\end{proof}

From Theorem \ref{pp:diff},  the operator $(\iota_u)_*:\Gamma^\infty_{cu}(V_k)\to \Gamma^{-\infty}(\hat{V}_k)$ in \eqref{eq:iotau} restricts to a continuous operator  
\[
(\iota_u)_*:\Gamma^\infty_{cu}(V_k)\to \mathcal{H}^{s,r}(\hat{V}_k).
\]
Similarly to the definition of $\mathcal{H}^{s,r}(W)$ in the last subsection, we
define $\mathcal{H}^{s,r}(V_k)$ as the completion of the space $\Gamma^\infty_{cu}(V_k)$ with respect to the pull-back of the norm on $\mathcal{H}^{s,r}(\hat{V}_{k})$ by $(\iota_u)_*$. 
If we choose the sequence of real parameters $s_k$ and $r_k$ for $0\le k\le d$, so that 
\[
s_k<-1<0<r_k,\qquad s_{k+1}\le s_k -1,\qquad r_{k+1}\le r_k-1,
\]
then, from Theorem \ref{pp:diff},  
the diagram \eqref{cd:extreq} extends to the following commutative diagram of bounded operators:
\begin{equation*}
\adjustbox{scale=0.95, center}{
\begin{tikzcd}
& \mathcal{H}^{s_k,r_k}(\hat{V}_{k}) \arrow[rr,"\hder_{k}^u"] \arrow[from=dd, "(\iota_{u})_*" near start] 
  & & \mathcal{H}^{s_{k+1},r_{k+1}}(\hat{V}_{k+1})  \arrow[from=dd,"(\iota_{u})_*" near start]
  \\ \mathcal{H}^{s_k,r_k}(\hat{V}_{k}) \arrow[rr, "\hder_k^u" near end, crossing over]\arrow[ur,"\hat{\cL}^t_{k}"] 
  & & \mathcal{H}^{s_{k+1},r_{k+1}}(\hat{V}_{k+1})\arrow[ur, "\hat{\cL}^t_{k}"]\\
& \mathcal{H}^{s_k,r_k}(V_{k}) \arrow[rr, "\der^u_{k}" near start] & & \mathcal{H}^{s_{k+1},r_{k+1}}(V_{k+1}) \\
\mathcal{H}^{s_k,r_k}(V_{k}) \arrow[rr, "\der^u_{k}"] \arrow[ur, "\cL^t_k"] \arrow[uu, "(\iota_{u})_*" near start ] & & \mathcal{H}^{s_{k+1},r_{k+1}}(V_{k+1})\arrow[ur, "\cL^t_{k-1}"]\arrow[uu,"(\iota_{u})_*" near start, crossing over]\\
\end{tikzcd}}
\end{equation*}
Therefore, setting $
\Lambda_k=\mathcal{H}^{s_k,r_k}(V_k)$ and connecting the diagram on the bottom face of the diagram above, we obtain the commutative diagram
\[
\begin{CD}
0@>>> \Lambda_0 @>{\der^u_0}>> \Lambda_1 @>{\der^u_1}>>\cdots @>{\der_{d_u-2}}>> \Lambda_{d_u-1} @>{\der_{d_u-1}}>> \Lambda_{d_u} @>>> 0 \\
@. @V{\cL^t_0}VV @V{\cL^t_1}VV  @. @V{\cL^t_{d_u-1}}VV @V{\cL^t_{d_u}}VV
\\
0@>>> \Lambda_0 @>{\der^u_0}>> \Lambda_1 @>{\der^u_1}>>\cdots @>{\der_{d_u-2}}>> \Lambda_{d_u-1} @>{\der_{d_u-1}}>> \Lambda_{d_u} @>>> 0
\end{CD}
\]
This finishes the proof of the claim (i) and (ii) of Theorem \ref{th:main}.  Finally the last claim on the dynamical Fredholm determinant is a direct consequence of Proposition \ref{pp:lf2}. 

\section{Fiber-wise cohomology}\label{sec:fiber}
In this section we prove Proposition \ref{pp:lf2}.
\subsection{Setting}
  Below we consider the setting that we introduced in Subsection \ref{ss:aniso}. We write $\Omega/|\Omega|$ for the orientation line bundle of $\ker D\pi_{B}$ defined in \eqref{eq:Tperp}.
For $0\le \nu \le d^\perp$, we set
\[
\Omega^{(\nu)}=((\ker D\pi_{B})^*)^{\wedge \nu}\quad\text{and}\quad \hat{W}_\nu=\hat{V}\otimes 
(\Omega/|\Omega|)\otimes \Omega^{(\nu)}
\]
so that $\hat{W}_{d^\perp}=\hat{W}$. 
The natural push-forward action of the flow $\hat{f}^t$ on $(\Omega/|\Omega|)\otimes \Omega^{(\nu)}$ is denoted 
\[
(\hat{f}^{-t})^*:(\Omega/|\Omega|)\otimes \Omega^{(\nu)}\to (\Omega/|\Omega|)\otimes \Omega^{(\nu)}.
\] 
Then the one-parameter family of the vector bundle maps 
\[
\hF^t_\nu=\hF^t\otimes (\hat{f}^{-t})^*: \hat{W}_\nu\to \hat{W}_\nu
\]
induces the transfer operator $
\hat{L}^t_{\nu}:\Gamma^{\infty}_{0}(\hat{W}_\nu)\to \Gamma^{\infty}_{0}(\hat{W}_\nu)$.
By Theorem \ref{th:standard}, this extends to a bounded operator 
\begin{equation}\label{eq:Ltnu}
\hat{L}^t_\nu:\mathcal{H}^{s,r}(\hat{W}_\nu)\to \mathcal{H}^{s,r}(\hat{W}_\nu).
\end{equation}
The dynamical Fredholm determinant  of $\hat{L}_\nu^t$ is defined formally by
\[
d_\nu(s)
= \exp\left(-\sum_{\po\in \PO} \sum_{m=1}^\infty 
\frac{e^{-sm|\po|}\cdot \mathrm{Tr}\, \hat{W}_{\po}\cdot \sigma^\perp(\po)\cdot \mathrm{Tr} ((P^\perp_\po)^{-m})^{\wedge \nu}}{m\cdot |\det(\mathrm{Id}-P_\po^{-m})|\cdot |\det(\mathrm{Id}-(P^\perp_\po)^{-m}|} \right)
\]
where  $P_\po^\perp$ is the transversal Jacobian matrix $P_{\hat{\po}}$ of the flow $\hat{f}^t$ along the periodic orbit $\hat{\po}=\iota(\po)$  restricted to the subspace $\ker D\pi_B$ and we set
\[
\sigma^\perp(\po)=\det P^\perp_\po/ |\det P^\perp_\po|.
\] 
Since  $\hat{L}_\nu^t$ is a smooth transfer operators associated to the hyperbolic flow $\hat{f}^t$, we can prove
\begin{Proposition}\label{pp:dynfdspec}
For each $0\le \nu \le d^\perp$, the dynamical Fredholm determinant $d_\nu(s)$ extends to an entire function and its
 zeros coincide with the discrete eigenvalues of the generator of $\hat{L}^t_{\nu}$, up to multiplicity. 
\end{Proposition}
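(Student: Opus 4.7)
The plan is to apply the standard theory of dynamical Fredholm determinants for smooth transfer operators of uniformly hyperbolic semi-flows developed in \cite{FaureSjostrand11, GLP12, DZ16}. All hypotheses are met: $\hat f^t$ is a $C^\infty$ uniformly hyperbolic semi-flow with attractor $\mathcal{A}$, the bundle $\hat W_\nu = \hat V \otimes (\Omega/|\Omega|) \otimes \Omega^{(\nu)}$ is $C^\infty$ on the trapping neighbourhood, and $\hat F^t_\nu$ is a $C^\infty$ vector bundle map. What remains is to pipe these through the machinery already at our disposal and to verify the combinatorial match between the flat traces and the exponential series defining $d_\nu$.

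First, Proposition \ref{th:standard} applied to $\hat L^t_\nu$ provides, for every pair of Sobolev parameters $s_0<0<r_0$, a generator $\hat A_\nu^{s_0,r_0}$ on $\mathcal{H}^{s_0,r_0}(\hat W_\nu)$ with discrete spectrum of finite multiplicity in the half-plane $\Re z > -\min(|s_0|,r_0)\chi+C$. A standard comparison argument, relying on the fact that eigendistributions of $\hat A_\nu$ are characterized by an intrinsic wavefront-set condition independent of the ambient Hilbert space, shows that these local spectra agree on overlaps and assemble into an intrinsic discrete set $\Sigma_\nu\subset\complex$ with algebraic multiplicities, discrete on every half-plane.

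Second, the Atiyah-Bott-Guillemin formula (cf.\ \cite[Appendix B]{DZ16}) computes $\Tr^\flat \hat L^t_\nu$ as a sum over periodic orbits $\hat\po$ of $\hat f^t$. These are in bijection with periodic orbits $\po$ of $f^t$ via $\iota$, and the transversal Jacobian $P_{\hat\po}$ is block-diagonal with blocks $P_\po$ (along $TM$) and $P^\perp_\po$ (along $\ker D\pi_B$), so $|\det(\Id - P_{\hat\po}^{-m})|$ factors accordingly. The contribution of $(\Omega/|\Omega|)\otimes \Omega^{(\nu)}$ to the fiber trace at a point on $\hat\po$ equals $\sigma^\perp(\po)\cdot \Tr((P^\perp_\po)^{-m})^{\wedge \nu}$, while the contribution of $\hat V$ is $\Tr\hat V_\po^m$. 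Assembling these reproduces the exponent in the definition of $d_\nu(s)$, so $\log d_\nu(s) = -\int_0^\infty (e^{-st}/t)\,\Tr^\flat \hat L^t_\nu\,dt$ for $\Re s$ large.

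The upgrade from meromorphy on half-planes to a single entire function uses the Weyl-type bound on the approximation numbers of $\hat L^T_\nu$ on $\mathcal{H}^{s_0,r_0}(\hat W_\nu)$ that is built into the proof of Proposition \ref{th:standard} via the escape-function quantization of \cite{FaureSjostrand11}: for large $T$ and any admissible $s_0,r_0$, the regularized Fredholm determinant $D^T(z):=\det(\Id - z\hat L^T_\nu)$ is entire of finite order and satisfies $\log D^T(z) = -\sum_{n\ge 1} (z^n/n)\,\Tr^\flat \hat L^{nT}_\nu$ near $z=0$. Equating this with the flat-trace identity from the previous step and sending $|s_0|,r_0 \to \infty$ identifies $d_\nu$ with an entire function whose zero divisor is $\Sigma_\nu$. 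The main obstacle in this chain is precisely the uniformity of the approximation-number bounds as the Sobolev parameters are enlarged: meromorphy on each half-plane is almost immediate, but entirety requires that the order of growth of $D^T$ does not explode with $r_0$. This uniformity is exactly what the escape-function construction of \cite{FaureSjostrand11} delivers, and since every object in sight is smooth the construction imports here without modification.
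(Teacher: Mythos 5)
The paper does not give a proof of this proposition at all: it simply cites \cite{DyatlovGuillamouAxiomA} (and notes that the methods of \cite{DZ16, MR3648976, GLP12} would also work), on the grounds that $\hat{L}^t_\nu$ is a \emph{smooth} transfer operator over the uniformly hyperbolic semi-flow $\hat{f}^t$. Your proposal therefore takes the same approach — invoking the standard theory of Fredholm/Ruelle determinants for smooth hyperbolic transfer operators — and tries to supply what that citation would contain. Two small remarks on your sketch: the expression $\det(\Id-z\hat{L}^T_\nu)$ needs a regularization since $\hat{L}^T_\nu$ is only quasi-compact (not trace class) on $\mathcal{H}^{s_0,r_0}(\hat{W}_\nu)$, and the relation between the regularized determinant and the flat trace is itself the non-trivial ingredient supplied by the cited references. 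Also, your worry about uniformity of approximation-number bounds as $|s_0|,r_0\to\infty$ is not the right framing: entirety follows by showing that for each $N$ one choice of $(s_0,r_0)$ yields a holomorphic extension of $d_\nu(s)$ to $\Re s>-N$ with zeros at the eigenvalues there, after which uniqueness of analytic continuation patches the half-plane extensions together — no limit or uniform growth estimate is required.
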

For the proof of this proposition, we refer \cite{DyatlovGuillamouAxiomA}, though either of the methods in  \cite{DZ16, MR3648976, GLP12} will be applicable to check this claim. 

By algebraic computation, we see that $d(s)$ defined in \eqref{eq:dsabs} is expressed as the alternating product of  the dynamical Fredholm determinants $d_\nu(s)$ as 
\begin{equation}\label{eq:ds}
d(s)=\prod_{\nu=0}^{d^\perp}d_{\nu}(s)^{(-1)^{d^\perp-\nu}}.
\end{equation}
Hence, from Proposition \ref{pp:dynfdspec}, we see that  $d(s)$ has a meromorphic extension to the complex plane $\mathbb{C}$. Note however that Proposition \ref{pp:lf2} claims that $d(s)$ has no pole. To prove this, we have to see how the zeros of $d_{\nu}(s)$ cancel each other in the alternative product above, by developing a cohomological argument along the fibers of the fibration $\pi_B:B\to M$. 
 
\subsection{Nested invariant subspaces in $ \mathcal{H}^{s,r}(\hat{W}_{\nu})$}
We consider the exterior derivative operators along the fibers of the fibration $\pi_B:B\to M$.
To begin with, we fix an arbitrary $C^\infty$ flat connection $H$ of the vector bundle $\hat{V}$ along the fibers of the fibration $B$, which is not necessarily  invariant with respect to the action of $F^t_\nu$. 
On each of the fibers of the fibration $B$, we trivialize the vector bundle $\hat{V}$ so that the flat connection $H$ looks  trivial in it. Then, with respect to such trivializations,  we define the exterior derivative operators,
\[
\der^\perp_\nu:\Gamma^{\infty}(\hat{W}_\nu)\to \Gamma^{\infty}(\hat{W}_{\nu+1}), \quad 0\le \nu\le d^\perp-1
\]
along  the fibers of the fibration $B$. Similarly to Proposition \ref{pp:diffop}, they extend to bounded operators
\[
\der^\perp_\nu:\mathcal{H}^{s-\nu,r-\nu}(\hat{W}_\nu)\to \mathcal{H}^{s-\nu-1,r-\nu-1}(\hat{W}_{\nu+1}), \quad 0\le k\le d^\perp-1.
\]
Clearly we have $\der^\perp_{k+1}\circ \der^\perp_k=0$. But note that  the diagram of operators
\begin{equation}\label{cd:noncom}
\begin{CD}
\Gamma_{0}^{\infty}(\hat{W}_0) @>{\der^\perp_0}>> \Gamma_{0}^{\infty}(\hat{W}_1)@>{\der^\perp_1}>> \cdots @>{\der^\perp_{d^\perp-1}}>>  \Gamma_{0}^{\infty}(\hat{W}_{d^\perp})\\
@V{\hat{L}^t_0}VV @V{\hat{L}^t_1}VV @.@V{\hat{L}^t_{d^\perp}}VV\\
\Gamma_{0}^{\infty}(\hat{W}_0) @>{\der^\perp_1}>> \Gamma_{0}^{\infty}(\hat{W}_1)@>{\der^\perp_2}>> \cdots @>{\der^\perp_{d^\perp-1}}>>  \Gamma_{0}^{\infty}(\hat{W}_{d_B})
\end{CD}
\end{equation}
 is  \emph{not necessarily commutative} because the flat connection $H$ on $\hat{V}$ may not be invariant with respect to $\hat{F}^t_\nu$.

Below  we take a nested subspaces in $\Gamma_{0}^{\infty}(\hat{W}_\nu)$ which are, in a sense, skeletons of the space $\Gamma_{0}^{\infty}(\hat{W}_\nu)$ with respect to the spectral property of  $\hat{L}^t_\nu$. 
Let us consider a local trivialization of the fibration $B$ on $U\subset M$,
\begin{equation}\label{eq:kappa}
\kappa:\pi_B^{-1}(U)\to U\times \mathbb{D}
\end{equation}
where $\mathbb{D}\subset \real^{d^\perp}$ is the unit disk, 
and also that of the vector bundle $\hat{V}$
\begin{equation}\label{eq:hkappa}
\hat{\kappa}: \pi_{\hat{V}}^{-1}(\pi_{B}^{-1}(U))\to U\times \mathbb{D}\times \complex^{d_V} 
\end{equation}
where $d_V$ denotes the dimension of the vector bundle $\hat{V}$. We assume that the  local trivialization $\hat{\kappa}$ is taken so that the flat connection $H$ viewed in it  is trivial connection on the trivial vector bundle $(U\times \mathbb{D})\times \complex^{d_V}$.  

Suppose that the image of the invariant section $\iota:M\to B$ in the local chart $\kappa$ is the graph of the section
\begin{equation}\label{eq:iotai}
\iota_{\kappa}:U\to U\times \mathbb{D}.
\end{equation}
Let $\mu_{\kappa}$ be the measure on $U\times \mathbb{D}$ obtained as the image of a smooth Riemann volume on $U$ by $\iota$. We write $(y_1,y_2,\cdots, y_{d^\perp})$ for the standard coordinates on $\mathbb{D}\subset \real^{d^\perp}$ and write $\partial_{j}$ for the partial differential operator by the variable $y_j$ acting of the sections of the trivial vector bundle $(U\times \mathbb{D}^{d^\perp})\times \complex^{d_V}$. 
\begin{Remark}\label{rem:ntcon}
 If we consider a $C^\infty$ flat connection on the trivial vector bundle $(U\times \mathbb{D}^{d^\perp})\times \complex^{d_\nu}$ other than the trivial one, the  partial differential operator $\partial'_i$ corresponding to $\partial_{i}$ will be written
\[
\partial'_i u=\partial_i u+\psi u\quad \text{for }u\in \Gamma^\infty((U\times \mathbb{D})\times \complex^{d_V}),
\]
where $\psi:U\times \mathbb{D}^{d^\perp}\to L(\complex^{d^\perp})$ denotes the  $C^\infty$ function which takes values in the space $L(\complex^{d^\perp})$ of linear transformation on  $\complex^{d_V}$.
\end{Remark}

For a multi-index $\alpha=(\alpha(i))_{i=1}^{d^\perp}\in \mathbb{Z}_+^{d^{\perp}}$, we set 
$|\alpha|=\sum_{i=1}^{d^\perp} \alpha(i)$ and write $\partial^\perp_{\alpha}$ for the partial differential operator
\[
\partial^\perp_{\alpha}=\partial^{\alpha(1)}_{1}\partial^{\alpha(2)}_{2}\cdot \partial^{\alpha(d^\perp)}_{\nu}
\] 
acting on the section of the trivial vector bundle $(U\times \mathbb{D})\times \complex^{d_\nu}$.
For a subset $I=\{i(1)<i(2)<\cdots<i({q})\}$ of $\{1,2,\cdots, d^\perp\}$, we set $|I|=\nu$ and consider the differential form  
\begin{equation}\label{eq:dyi}
dy_I=dy_{i(1)}\wedge dy_{i(2)}\wedge \cdots \wedge dy_{i(\nu)}\in  ((\complex^{d^\perp})^*)^{\wedge \nu}.
\end{equation}
We will understood it as a constant section of $(U\times \mathbb{D})\times ((\complex^{d^\perp})^*)^{\wedge \nu}$. These give a local trivialization of the vector bundle $\Omega^{(\nu)}$. Hence, together with the local trivialization $\hat{\kappa}$ of the vector bundle $\hat{V}$, we obtain the local trivialization of the vector bundle\footnote{Here and henceforth we ignore the orientation bundle $\Omega/|\Omega|$  because it does not play any role in it} $\hat{W}_\nu$:
\[
\hat{\kappa}_\nu: \pi_{\hat{W}_\nu}^{-1}(\pi_{B}^{-1}(U))\to (U\times \mathbb{D})\times  (\complex^{d_V}\otimes ((\complex^{d^\perp})^*)^{\wedge \nu}).
\]

For integers $q\ge 0$ and $0\le \nu\le d^\perp$, we write $\mathbf{D}_{\nu,q}$ for the space of distributional section $u\in \Gamma^{-\infty}(\hat{W}_\nu)$ that is written in the form 
\begin{equation}\label{eq:localDq}
\sum_{|\alpha|\le q}\sum_{|I|=\nu}\sum_{j=1}^{d_V}
 (\varphi_{\alpha,j,I}\cdot \partial^\perp_{\alpha}  \mu)\cdot ( \mathbf{e}_j\otimes dy_I)
\end{equation}
in each of the local trivializations as above, 
where $\{\mathbf{e}_j\}_{j=1}^{d_V}$ is the standard basis of $\complex^{d_V}$ and $\varphi_{\alpha,j,I}:U\times \mathbb{D}\to \complex^{d_V}$ are vector-valued function in $\Gamma^\infty_{cu}(M)^{d_V}$ with compact support. (Recall Remark~\ref{rm:gammauM} for the definition of $\Gamma^\infty_{cu}(M)$.)  From the fact noted in Remark \ref{rem:ntcon}, the spaces $\mathbf{D}_{\nu,q}$ actually does not depend on the choice of the flat connection $H$ on $\hat{W}_\nu$ and are invariant with respect to the action of  $\hat{L}^t_\nu:\Gamma^{-\infty}(\hat{W}_\nu)\to \Gamma^{-\infty}(\hat{W}_\nu)$.  Further, in the same manner as in the proof of Proposition \ref{pp:diff}, we can prove the next lemma. 
\begin{Lemma}\label{lm:sr} If  $s<0$ is sufficiently small according to $q$, the space $\mathbf{D}_{\nu,q}$ is contained in $\mathcal{H}^{s,r}(\hat{W}_\nu)$. 
\end{Lemma}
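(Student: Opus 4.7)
The plan is to combine Propositions \ref{pp:diff} and \ref{pp:diffop} after first commuting the cu-smooth factors past the fiber derivatives in the local expression \eqref{eq:localDq}. Using a $C^\infty$ partition of unity subordinate to a finite cover of $M$ by trivialization charts of the form \eqref{eq:kappa}--\eqref{eq:hkappa}, I would reduce the claim to a single-chart estimate: it suffices to show that each term $(\varphi\cdot \partial^\perp_\alpha \mu_\kappa)\cdot(\mathbf{e}_j\otimes dy_I)$ with $|\alpha|\le q$ and $\varphi\in \Gamma^\infty_{cu}(M)^{d_V}$ of compact support lies in $\mathcal{H}^{s,r}(\hat{W}_\nu)$ whenever $s<0$ is sufficiently negative. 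The smooth tensorial factor $\mathbf{e}_j\otimes dy_I$ is a pointwise multiplier bounded on $\mathcal{H}^{s,r}(\hat{W}_\nu)$ and can be ignored.

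Next, since $\varphi$ is the pull-back of a function from the base $M$, it depends only on the base variable $x$ and therefore commutes with the fiber derivative $\partial^\perp_\alpha$; the summand thus becomes $\partial^\perp_\alpha(\varphi\,\mu_\kappa)$, where $\varphi\,\mu_\kappa$ is a compactly supported distribution of exactly the same type as $\mu_\kappa$ itself, namely the image under $\iota$ of a $\Gamma^\infty_{cu}$-density along a local center-unstable leaf of $M$. Given the target $r>0$, I would fix $r_0>r+q+1$ large enough that $r_0-k>1$ throughout the forthcoming iteration, invoke Proposition \ref{pp:diff} to obtain $\varphi\,\mu_\kappa\in \mathcal{H}^{s_0,r_0}(\hat{W}_\nu)$ for any $s_0<0$ sufficiently negative depending on $\dim B$, and then iterate Proposition \ref{pp:diffop} $q$ times to conclude that the $|\alpha|$-th order differential operator $\partial^\perp_\alpha$ with $C^\infty$ coefficients maps $\mathcal{H}^{s_0,r_0}$ boundedly into $\mathcal{H}^{s_0-q,r_0-q}$. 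Choosing $s_0$ negative enough that in addition $s_0-q\le s$, one arrives at $\partial^\perp_\alpha(\varphi\,\mu_\kappa)\in \mathcal{H}^{s_0-q,r_0-q}(\hat W_\nu)\subset \mathcal{H}^{s,r}(\hat{W}_\nu)$, and summing over the finitely many summands in \eqref{eq:localDq} and over the partition of unity completes the proof.

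The main subtlety, rather than a deep analytical obstacle, lies in justifying that $\varphi\,\mu_\kappa$ is a legitimate input for Proposition \ref{pp:diff} even though $\varphi$ is only cu-smooth and transversally merely continuous. The essential observation is that Proposition \ref{pp:diff} requires its input to be $C^\infty$ only along the local center-unstable leaf, and this is exactly the regularity enjoyed by the product $\varphi\,\mu_\kappa$ since both factors are smooth in the cu-direction. A secondary technicality --- that the successive applications of Proposition \ref{pp:diffop} cost exactly one unit in each of $s$ and $r$ per derivative, so the Sobolev orders can be budgeted cleanly against the target $(s,r)$ --- is disposed of by the bookkeeping above, and this explains the quantitative dependence of "$s$ sufficiently small" on $q$ stated in the lemma.
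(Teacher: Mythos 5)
Your overall strategy---treating the $q=0$ case as an instance of Proposition \ref{pp:diff} and then climbing to general $q$ by iterating Proposition \ref{pp:diffop}---is a sound and somewhat more modular route than what the paper has in mind. The paper simply says to argue ``in the same manner as in the proof of Proposition~\ref{pp:diff},'' i.e.\ to redo the wavefront-set estimate directly: a generic element of $\mathbf{D}_{\nu,q}$ is conormal to the local center-unstable manifold, so its wavefront set sits in $\hat E_u^*$ where $\weight^{s,r}\sim\langle\xi\rangle^{s}$, and the additional $q$ transversal derivatives raise its conormal order by $q$, so one just needs $s$ more negative by $q$ than in the $q=0$ case. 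Your version buys the same conclusion by invoking Proposition~\ref{pp:diffop} $q$ times instead of re-running the wavefront computation; it also correctly identifies that $\varphi$, being pulled back from $M$, commutes past $\partial^\perp_\alpha$, so each summand in \eqref{eq:localDq} is of the form $\partial^\perp_\alpha(\varphi\mu_\kappa)$ with $\varphi\mu_\kappa$ a legitimate input to Proposition~\ref{pp:diff}.

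However, there is a genuine error in the concluding bookkeeping. You want the inclusion
$\mathcal{H}^{s_0-q,\,r_0-q}(\hat W_\nu)\subset \mathcal{H}^{s,r}(\hat W_\nu)$,
and you assert that this holds once $s_0-q\le s$ (with $r_0-q\ge r$). That is backwards. From the definition of $\weight^{s,r}$, the exponent $r+(s-r)\mu$ is linear in $\mu\in[0,1]$, so $\weight^{s',r'}\gtrsim\weight^{s,r}$ as $\|\xi\|\to\infty$---and hence $\mathcal{H}^{s',r'}\subset\mathcal{H}^{s,r}$---precisely when \emph{both} $r'\ge r$ \emph{and} $s'\ge s$ (check the endpoints $\mu=0$ and $\mu=1$). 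Making $s'$ more negative enlarges the space, it does not shrink it. So the needed condition is $s_0-q\ge s$, i.e.\ $s\le s_0-q$. Correspondingly, the phrase ``choosing $s_0$ negative enough'' has the quantifiers inverted: you cannot choose $s_0$ ever more negative to force the inclusion; rather, given the target $s$ (assumed small enough, say $s<-c(\dim B)-q$), you should \emph{set} $s_0:=s+q$, verify $s_0<-c(\dim B)$ so that Proposition~\ref{pp:diff} applies, and then the chain reads $\partial^\perp_\alpha(\varphi\mu_\kappa)\in\mathcal{H}^{s_0-q,r_0-q}=\mathcal{H}^{s,\,r_0-q}\subset\mathcal{H}^{s,r}$ using $r_0-q\ge r$. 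With that sign flipped and the quantifiers reordered, the proof goes through and correctly reproduces the stated dependence of $s$ on $q$ (and on $\dim B$).
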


Below we fix some integer $q_0>0$ and consider the nest of subspaces  
\begin{equation}\label{eq:nestsp1}
\mathbf{D}_{\nu,0}\subset \mathbf{D}_{\nu,1}\subset \cdots \subset \mathbf{D}_{\nu,q_0}\qquad\text{in $\Gamma^{-\infty}(\hat{W}_\nu)$. }
\end{equation}
Thanks to Lemma \ref{lm:sr}, we can take and fix $s<0<r$ so that we have $\mathbf{D}_{\nu,q_0}\subset \mathcal{H}^{s-\nu,r-\nu}(\hat{W}_\nu)$ for $0\le \nu\le d^{\perp}$. For $0\le q\le q_0$, we write $\overline{\mathbf{D}_{\nu,q}}$ for the closure of $\mathbf{D}_{\nu,q}$ in $\mathcal{H}^{s-\nu,r-\nu}(\hat{W}_\nu)$, so that we obtain the nest of closed subspaces
\begin{equation}\label{eqnestsp2}
\overline{\mathbf{D}_{\nu,0}}\subset \overline{\mathbf{D}_{\nu,1}}\subset \cdots \subset \overline{\mathbf{D}_{\nu,q_0}}\qquad\text{in $\mathcal{H}^{s-\nu,r-\nu}(\hat{W}_\nu)$. }
\end{equation}
The exterior derivative operator $\der^\perp_\nu$ maps $\mathbf{D}_{\nu,q}$ into $\mathbf{D}_{\nu+1,q+1}$ and, by continuity, maps $\overline{\mathbf{D}_{\nu,q}}$ into $\overline{\mathbf{D}_{\nu+1,q+1}}$.
From Remark \ref{rem:ntcon}, the transfer operator $\hat{L}^t_\nu$ preserves the spaces $\mathbf{D}_{\nu,q}$ and $\overline{\mathbf{D}_{\nu,q}}$. Hence the action of $\hat{L}^t_\nu$ on the space $\overline{\mathbf{D}_{\nu,q_0}}$ is ``triangular" in the sense that it preserves the nest of subspaces \eqref{eqnestsp2} (and \eqref{eq:nestsp1}). 
Further, from Remark \ref{rem:ntcon}, we see that the image of 
\[
\hat{L}^t_{\nu+1}\circ \der^\perp_\nu- \der^\perp_{\nu} \circ \hat{L}^t_{\nu}:\overline{\mathbf{D}_{\nu,q}}\to \overline{\mathbf{D}_{\nu+1,q+1}}
\]
is contained in $\overline{\mathbf{D}_{\nu+1,q}}$ and therefore the following diagram commutes:
\begin{equation}\label{eq:comm}
\begin{CD}
\overline{\mathbf{D}_{\nu,q}}/\overline{\mathbf{D}_{\nu,q-1}} @>{\der^\perp_\nu}>> \overline{\mathbf{D}_{\nu+1,q+1}}/\overline{	\mathbf{D}_{\nu+1,q}}\\
@V{\hat{L}^t_{\nu}}VV @V{\hat{L}^t_{\nu+1}}VV\\
\overline{\mathbf{D}_{\nu,q}}/\overline{\mathbf{D}_{\nu,q-1}} @>{\der^\perp_\nu}>> \overline{\mathbf{D}_{\nu+1,q+1}}/\overline{\mathbf{D}_{\nu+1,q}}
\end{CD}
\end{equation}
even though the diagram \eqref{cd:noncom} may not commutes. 
Recalling the argument in the proof of Proposition \ref{pp:diff} and the definition of $\mathcal{H}^{s,r}(V)$, we see that
\begin{equation}
\overline{\mathbf{D}_{d^\perp,0}}=\overline{\iota_*(\Gamma^{\infty}_{cu}(V))}=\iota_*(\mathcal{H}^{s-d^\perp,r-d^\perp}(V)).
\end{equation}

\subsection{Approximation by the nested subspaces $\mathbf{D}_{\nu,q}$}
In this subsection, we show that that the nested subspaces $\mathbf{D}_{\nu,q}$ exhaust $\cH^{s-\nu,r-\nu}(\hat{W}_\nu)$  in relation to the spectral properties of the operators $\hat{L}^t_\nu$. 
Let $K>0$ is an arbitrary positive real number, we prove that the next proposition holds true if we  choose $q_0>0$ and $s<0<r$ according to $K$.   
\begin{Proposition}\label{prop:approx} Let $0\le \nu\le d^{\perp}$ and let  $\sigma_0\in \complex$ with $\Re(\sigma_0)\ge -K$ be one of the discrete eigenvalues of the generator of the one-parameter semi-group $
\hat{L}^t_{\nu}:\mathcal{H}^{s-\nu,r-\nu}(\hat{W}_\nu)\to \mathcal{H}^{s-\nu,r-\nu}(\hat{W}_\nu)$. Then  the  generalized eigenspace $E$ for the eigenvalue $\sigma_0$ is contained in $\overline{\mathbf{D}_{\nu,q_0}}$. 
\end{Proposition}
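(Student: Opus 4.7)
The plan is a spectral-gap argument on the quotient $Q_{q_0}:=\mathcal{H}^{s-\nu,r-\nu}(\hat{W}_{\nu})/\overline{\mathbf{D}_{\nu,q_0}}$. The filtration $\overline{\mathbf{D}_{\nu,q}}$ is $\hat{L}^t_{\nu}$-invariant (as noted right after \eqref{eq:comm}), so $\hat{L}^t_{\nu}$ descends to a bounded semigroup $\tilde{L}^t_{\nu}$ on $Q_{q_0}$. The key claim I would establish is the decay bound
\[
\|\tilde{L}^t_{\nu}\|_{Q_{q_0}\to Q_{q_0}}\ \le\ C_0\,\exp\!\bigl((-(q_0+1)\chi+c)\,t\bigr),\qquad t\ge 0,
\]
with $c=c(s,r)>0$ depending on the weight parameters and on the Jacobian of $\hat{f}^t$ in stable directions, but not on $q_0$.

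Once this is in hand the conclusion is immediate. Write $\pi$ for the quotient map onto $Q_{q_0}$. Then $\pi(E)$ is finite-dimensional and $\tilde{L}^t_{\nu}$-invariant, and the commutativity $\pi\circ\hat{L}^t_{\nu}=\tilde{L}^t_{\nu}\circ\pi$ forces the spectrum of $\tilde{L}^t_{\nu}|_{\pi(E)}$ into $\{e^{\sigma_0 t}\}$. So if $\pi(E)\ne 0$, the spectral radius of $\tilde{L}^t_{\nu}$ on $Q_{q_0}$ is at least $e^{\Re(\sigma_0)t}\ge e^{-Kt}$, contradicting the decay bound as soon as we first choose $s<0$ small enough for Lemma~\ref{lm:sr} and then pick $q_0$ so large that $(q_0+1)\chi>K+c(s,r)$. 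Hence $\pi(E)=0$ and $E\subset\overline{\mathbf{D}_{\nu,q_0}}$.

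The hard part will be the decay bound itself. The heuristic is clean: a class in $Q_{q_0}$ admits a representative whose fiber-transverse jet of order $\le q_0$ vanishes at the attractor, and then the push-forward by $\hat{f}^t$ extracts an additional fiber contraction factor $e^{-(q_0+1)\chi t}$ beyond what is already packaged into the weight $\weight^{s-\nu,r-\nu}$ through \eqref{eq:wsr1}--\eqref{eq:wsr2}. Its microlocal implementation is where the work lies: in the local trivializations \eqref{eq:kappa}--\eqref{eq:hkappa} one has to work with the pseudodifferential calculus underlying the anisotropic Sobolev norm, microlocally isolate the fiber-conormal cone (on which the jet-vanishing argument gives the precise exponent $(q_0+1)\chi$) from the rest of $\hat{E}^*_s$, check that the cutoff contributes only a compact perturbation that does not affect the spectral radius bound, and verify that the non-invariance of the auxiliary flat connection $H$ (cf.~Remark~\ref{rem:ntcon}) produces only lower-order terms absorbed by the filtration. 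In effect, this refines the argument behind Proposition~\ref{th:standard} by tracking fiber-transverse jet order in addition to stable-direction regularity.
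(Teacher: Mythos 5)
Your reduction of the proposition to a spectral-radius bound on the quotient $Q_{q_0}$ is logically sound, and it is a genuinely different framing from the paper's. However, the entire weight of the argument is shifted onto the claimed decay bound
\[
\|\tilde{L}^t_{\nu}\|_{Q_{q_0}\to Q_{q_0}}\ \le\ C_0\,\exp\!\bigl((-(q_0+1)\chi+c)\,t\bigr),
\]
and this is where there is a real gap. The heuristic you give — that a class in $Q_{q_0}$ has a representative with vanishing fiber-transverse jet of order $\le q_0$, and the push-forward then gains $e^{-(q_0+1)\chi t}$ — is only meaningful for \emph{smooth} representatives. A generic element of $\mathcal{H}^{s-\nu,r-\nu}(\hat{W}_\nu)$ has no fiber-transverse Taylor expansion, and the quotient norm $\inf_{v\in\overline{\mathbf{D}_{\nu,q_0}}}\|u-v\|$ is an $L^2$-type anisotropic Sobolev quantity with no built-in jet detection. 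The refinement of the microlocal calculus you gesture at would not extract the exponent $(q_0+1)\chi$: the weight $\weight^{s,r}$ encodes directional regularity (via \eqref{eq:wsr1}--\eqref{eq:wsr2}), not contact order with the attractor. Worse, the bound you need on the quotient is essentially equivalent to the proposition itself: the finitely many discrete eigenvalues of $\hat{L}^t_\nu$ with $|\lambda| > \rho_{\mathrm{ess}}$ contribute to the quotient norm unless their generalized eigenvectors already lie in $\overline{\mathbf{D}_{\nu,q_0}}$, which is the conclusion you are trying to reach, so the argument risks circularity.

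The paper avoids this by working \emph{element by element} rather than trying for a uniform norm bound on the quotient. Given a generalized eigenvector $u$, one picks a smooth $\tilde u$ with $P_{\ge}\tilde u = u$ and uses that $\mu_0^{-m}M^m\tilde u\to u$, where $M$ is a time-averaged transfer operator. The smoothness of $M^m\tilde u$ is essential: it is what allows a genuine Taylor expansion in the fiber direction, producing an explicit $v\in\mathbf{D}_{\nu,q_0}$, and the fact that the support of $M^m\tilde u$ is confined to an $e^{-\chi mT}$-thin neighborhood of the attractor is what gives the $e^{-(q_0+1-B)\chi mT}$ bound on the Taylor remainder. Even then, the remainder is only controlled after splitting into low and high frequency parts and handling them by separate mechanisms (Taylor bound for low frequencies, the built-in anisotropic decay \eqref{eq:wsrmotive2} for high frequencies). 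Your proposal collapses all of this into a single uniform operator bound, which the available machinery does not produce. If you want to salvage the quotient formulation, you would need to restrict to a dense, smoothness-preserving subspace of $Q_{q_0}$ on which the jet argument applies, and argue that the discrete spectrum above level $e^{-Kt}$ is unaffected — which brings you back to essentially the paper's element-wise approximation.
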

\begin{proof}
Let $T>0$ be a positive real number and let $\psi:\real\to \real$ be a $C^\infty$ non-negative-valued function supported in $[T,T+1]$ satisfying $\int \psi dx=1$. 
Instead of the generator, we consider the bounded operator
\begin{equation}\label{eq:M}
R=\int \psi(t)e^{-i\cdot \mathrm{Im}(\sigma_0)t}\cdot  \hat{L}^t_{\nu}\,dt:\mathcal{H}^{s-\nu,r-\nu}(\hat{W}_\nu)\to \mathcal{H}^{s-\nu,r-\nu}(\hat{W}_\nu).
\end{equation}
Then $E$  is contained in the generalized eigenspace of $R$ for the eigenvalue 
\[
\mu_0=\int \psi(t)e^{-i\cdot \mathrm{Im}(\sigma_0)t}\cdot  e^{\sigma_0 t} dt=\int \psi(t)e^{\mathrm{Re}(\sigma_0)t} dt>0.
\]
Letting $s<0$ be sufficiently small and $r>0$ be sufficiently large, we may and do suppose that the essential spectral radius of the operator $M$ is smaller than~$\mu_0$. 
We split the spectral set of $M$ into the finite set of discrete eigenvalues of $M$  whose absolute values are not smaller than $\mu_0$ and the rest. We write  $\mathcal{H}^{s-\nu,r-\nu}(\hat{V}_\nu)=H_{\ge }\oplus H_{<}$ for the corresponding spectral decomposition, so that $H_{\ge}$ is of finite dimensional and contains $u$. 
Since the space $\Gamma^\infty_{0}(\hat{V}_\nu)$ of $C^\infty$ sections with compact support is dense in $\mathcal{H}^{s-\nu,r-\nu}(\hat{W}_\nu)$ by definition, the spectral projector $P_{\ge 0}$ to $H_{\ge}$ (along $H_{<}$) is a surjection from $\Gamma^\infty_{0}(\hat{V}_\nu)$ to $H_{\ge}$. Hence, for any $u\in E$, we can take and fix a smooth section $\tilde{u}\in \Gamma^\infty_{0}(\hat{V}_\nu)$ such that $P_{\ge 0}\tilde{u}=u$. This implies that  
\begin{equation}\label{eq:approx1}
\lim_{m\to \infty} \mu_0^{-m} M^{m}\tilde{u}=u \quad\text{in }\mathcal{H}^{s-\nu,r-\nu}(\hat{W}_\nu).
\end{equation}
For the proof of the proposition, it is enough to show the claim that, for any $\varepsilon>0$, there exists arbitrarily large $m>0$ and $v\in \mathbf{D}_{\nu,q}$ such that 
\begin{equation}\label{eq:claim_approx}
\|\mu_0^{-m}M^m\tilde{u}-v\|_{\mathcal{H}^{s-\nu,r-\nu}(\hat{W}_\nu)}< \varepsilon.
\end{equation}
Below we construct such an element $v\in \mathbf{D}_{\nu,q}$ assuming that $m$ is sufficiently large.
The point in the proof is that the support of  the section $M^m\tilde{u}$ is contained in the small neighborhood
\[
\mathcal{N}_{m,i}=\{b\in B\mid 
d(b, \iota(\pi_B(b)))<Ce^{-\chi mT}\}
\]
of the attractor $\mathcal{A}=\iota(M)$, because the semi-flow $\hat{f}^t$ is exponentially contracting along the fibers of the fibration $\pi_B:B\to M$. 

Let us recall the local trivializations $\kappa$ and $\hat{\kappa}$ in \eqref{eq:kappa} and \eqref{eq:hkappa}.
Then the section $M^m\tilde{u}$ viewed in the local trivialization $\hat{\kappa}$ is expressed as a $C^\infty$ section 
\[
h:U\times \mathbb{D}\to \complex^{d_V}\otimes ((\complex^{d^\perp})^*)^{\wedge \nu}).
\]
For each multi-index $\alpha=(\alpha_j)_{j=1}^{d^\perp}\in \mathbb{Z}_+^{d^\perp}$, we define the section 
\[
h_\alpha:U\times \mathbb{D}\to \complex^{d_V}\otimes ((\complex^{d^\perp})^*)^{\wedge \nu})
\]
by 
\[
h_{\alpha}(x,y)=\frac{1}{\alpha!}\int (y'-y)^{\alpha} h(x,y') dy':U\times \mathbb{D}\to \complex^{d_V}\otimes ((\complex^{d^\perp})^*)^{\wedge \nu}) 
\]
where $(x,y)$ denotes the coordinates on $U\times \mathbb{D}$ and $\alpha !=\alpha_1!\cdot \alpha_2!\cdots \alpha_{d^{\perp}}!$. Then we define a distributional section 
\[
v_{\kappa}:U\times \mathbb{D}\to \complex^{d_V}\otimes ((\complex^{d^\perp})^*)^{\wedge \nu})
\]
by
\begin{align*}
v_{\kappa}(x,y)&=\sum_{|\alpha|\le q_0}    \partial^{\alpha}_y (h_{\alpha}(x,y)\cdot \delta(y-\iota(x)))\\
&=\frac{1}{\alpha !}\sum_{|\alpha|\le q_0} \partial^{\alpha}_y \int  (y'-y)^\alpha \cdot h(x,y')  \cdot  \delta(y-\iota(x)) dy'.
\end{align*}
We define a distributional section $v\in \Gamma^{-\infty}(\hat{W}_\nu)$ by patching the distributional section $v_{\kappa}$ for local trivializations $\hat{\kappa}$ that cover $B$, using a $C^\infty$ partition of unity subordinate to it. Clearly $v$ belongs to $\mathbf{D}_{\nu,q_0}$. 

We check the condition \eqref{eq:claim_approx} for the section $v\in \mathbf{D}_{\nu,q_0}$ defined above. Note that distributional sections of the vector bundle $\hat{W}_k$ is an element of the dual of the space $\Gamma^\infty_{0}(\hat{W}_\nu^\dag)$ with setting 
\[
\hat{W}_\nu^\dag=\hat{V}^*\otimes \Omega^{(d^\perp-\nu)}\otimes \pi_B^*(|\Det^*|).
\]

 Below we confuse the (distributional) sections $v$ and its expression in the local trivialization $\hat{\kappa}_\nu$ and similarly for the section $w\in \Gamma^\infty_{0}(\hat{W}_k^\dag)$. Then the pairing $\langle v,w\rangle$ is computed in the local trivialization as 
 \begin{align*} 
&
\int dx dy dy' \cdot \sum_{|\alpha|\le q_0}\frac{(y'-y)^\alpha}{\alpha !} \cdot \partial^\alpha_y w(x,y) \cdot  h(x,y')  \cdot \delta(y-\iota(x)) \\
&
=\int dx dy'  \cdot \sum_{|\alpha|\le q_0}\frac{(y'-\iota(x))^\alpha}{\alpha !} \cdot \partial^\alpha_y w(x,\iota(x)) \cdot  h(x,y').
\end{align*}
By Taylor's theorem and the fact that $v$ (or $h$) is supported on $\mathcal{N}_{m,i}$, we can bound the quantity $|\left\langle M^m\tilde{u}-v, w\right\rangle |$ by 
\begin{align*}
&\left|\int   \bigg(w(x,y)-\sum_{|\alpha|\le q_0}\frac{(y-\iota(x))^\alpha}{\alpha !}\cdot \partial^\alpha_y w(x,\iota(x))\bigg)  h(x,y') \right|dx dy' 
\\
&\qquad\le C e^{-(q_0+1-B)\chi m T}\cdot \left(\max_{|\alpha|\le {q_0+1}}\|\partial^\alpha_y w\|_{L^\infty}\right)
\end{align*}
where we let $B>0$ be a constant depending on the flow $\hat{f}^t$ such that $\|M^m\tilde{u}(x,y')\|_{L^2}\le Ce^{B\chi mT}$.  

Now we suppose that we take $q_0$ sufficiently large so that $q_0>B$. Then 
the last estimate implies that the difference $M^m\tilde{u}-v$ is small when we look at its ``low frequency part". Indeed, if we take a small constant $\theta>0$ and set
\[
\chi_T:T^*M\to [0,1],\quad \chi_T(x,\xi)=\chi(\|e^{-\theta \chi T}\cdot\xi\|)
\] 
where $\chi:\real\to [0,1]$ is a smooth function which takes value $1$ on the region $|s|\le 1$ while $0$ on the region $|s|\ge 2$, we have that 
\[
\|\mathrm{Op}(\chi_T) (M^m\tilde{u}-v)\|^2_{L^2} \le  C e^{-((1-\theta)(q_0+1)-B)\chi m T}.
\]
Therefore we have the estimate
\[ 
\|\mathrm{Op}(W^{s,r}\cdot \chi_T) (M^m\tilde{u}-v)\|_{L^2} \le C e^{-((1-\theta)(q_0+1)-K-r\theta)\chi T}, 
\]
where the right-hand side is small provided that $\theta>0$ is sufficiently small. 

For the remaining ``high-frequency parts", we estimate each of $M^m\tilde{u}$ and $v$ as follows. For the former,  the estimate \eqref{eq:wsrmotive2} leads to 
\[
\|\mathrm{Op}(W^{s,r}\cdot (1-\chi_T)) M^m\tilde{u}\|^2_{L^2}\le  
 C e^{-(\min\{|s|, r\}\theta-B) \chi T}.
\]
For the latter, we recall the argument in the proof of Proposition \ref{pp:diff} and see that 
\[
\|\mathrm{Op}(W^{s,r}\cdot (1-\chi_T)) v\|^2_{L^2} \le C e^{(-|s|\theta+B') \chi m T}
\]
where $B'>0$ is another constant depending on the flow $\hat{f}^t$. 
Therefore if we let $|s|$ and $r$ be sufficiently large, we obtain 
\[
\|\mathrm{Op}(W^{s,r})( M^m\tilde{u}- v)\|^2_{L^2} \le C e^{(-\min\{|s|, r\}\theta/2) \chi m T}.
\]
Hence we obtain the required estimate if we let $T$ be sufficiently large. 
\end{proof}

\subsection{A version of Poincar\'e lemma}
Recall the \emph{non-commutative} diagram \eqref{cd:noncom}. It induces the following diagram of bounded operators, which is again not commutative:
\begin{equation}\label{cd:noncom2}
\begin{CD}
\mathcal{H}^{s,r}(\hat{V}_0) @>{\der^\perp_0}>> \mathcal{H}^{s-1,r-1}(\hat{V}_1)@>{\der^\perp_1}>> \cdots @>{\der^\perp_{d_B-1}}>>  \mathcal{H}^{s-d^\perp,r-d^\perp}(\hat{V}_{d_B})\\
@V{\hat{L}^t_0}VV @V{\hat{L}^t_1}VV @.@V{\hat{L}^t_{d_B}}VV\\
\mathcal{H}^{s,r}(\hat{V}_0) @>{\der^\perp_0}>> \mathcal{H}^{s-1,r-1}(\hat{V}_1)@>{\der^\perp_1}>> \cdots @>{\der^\perp_{d_B-1}}>>  \mathcal{H}^{s-d^\perp,r-d^\perp}(\hat{V}_{d_B})
\end{CD}
\end{equation}
Note that, in relation to the discrete eigenvalues in the region $\mathrm{Re}(s)>-K$, it is enough to consider the following restriction of the diagram above:
\begin{equation}\label{cd:noncom3}
\begin{CD}
\overline{\mathbf{D}_{0,q_0}}@>{\der^\perp_0}>> \overline{\mathbf{D}_{1,q_0+1}}@>{\der^\perp_1}>> \cdots @>{\der^\perp_{d_B-1}}>>  \overline{\mathbf{D}_{d^\perp,q_0+d^\perp}}\\
@V{\hat{L}^t_0}VV @V{\hat{L}^t_1}VV @.@V{\hat{L}^t_{d_B}}VV\\
\overline{\mathbf{D}_{0,q_0}}@>{\der^\perp_0}>> \overline{\mathbf{D}_{1,q_0+1}}@>{\der^\perp_1}>> \cdots @>{\der^\perp_{d_B-1}}>>  \overline{\mathbf{D}_{d^\perp,q_0+d^\perp}}
\end{CD}
\end{equation}
Note that, though this diagram is still not commutative, we obtain the following commutative diagram by connecting \eqref{eq:comm}:
\begin{equation}\label{eq:comm01}
\begin{CD}
\overline{\mathbf{D}_{\nu-1,q-1}}/\overline{\mathbf{D}_{\nu,q-2}} @>{\der^\perp_{\nu-1}}>>
\overline{\mathbf{D}_{\nu,q}}/\overline{\mathbf{D}_{\nu,q-1}} @>{\der^\perp_\nu}>> \overline{\mathbf{D}_{\nu+1,q+1}}/\overline{	\mathbf{D}_{\nu+1,q}}\\
@V{\hat{L}^t_{\nu-1}}VV @V{\hat{L}^t_{\nu}}VV @V{\hat{L}^t_{\nu+1}}VV\\
\overline{\mathbf{D}_{\nu-1,q-1}}/\overline{\mathbf{D}_{\nu,q-2}} @>{\der^\perp_\nu}>>
\overline{\mathbf{D}_{\nu,q}}/\overline{\mathbf{D}_{\nu,q-1}} @>{\der^\perp_{\nu-1}}>> \overline{\mathbf{D}_{\nu+1,q+1}}/\overline{	\mathbf{D}_{\nu+1,q}}\end{CD}
\end{equation}
Since $\der^\perp_\nu\circ \der^\perp_{\nu-1}=0$, the transfer operator $\hat{L}^t_{\nu}$ acts on the cohomological space
\[
\hat{L}^t_{\nu}:\ker \der^{\perp}_\nu/\overline{\mathrm{Im} \der^\perp_{\nu-1}}\to \ker \der^{\perp}_\nu/\overline{\mathrm{Im} \der^\perp_{\nu-1}}
\]
where we understand that $\der^\perp_{\nu-1}$ and $\der^\perp_{\nu}$ are those in the diagram \eqref{eq:comm01} and the closures are taken in $\overline{\mathbf{D}_{\nu,q}}/\overline{\mathbf{D}_{\nu,q-1}}$. 

In the next (or the last) subsection,  we will introduce a formal adjoint of the operator $\der^\perp_\nu$,
\begin{equation}\label{eq:cod0}
\der^*_\nu:\mathbf{D}_{\nu,q}\to \mathbf{D}_{\nu-1,q-1},
\end{equation}
and prove the following proposition. In the statement, we write $\overline{\overline{\mathbf{D}_{\nu,q}}}$ for the closure of $\mathbf{D}_{\nu,q}$ in the space $\mathcal{H}^{s-\nu-1, r-\nu-1}(\hat{W}_\nu)$ and equipped it with the topology inherited from $\mathcal{H}^{s-\nu-1, r-\nu-1}(\hat{W}_\nu)$. Note that we have $
\overline{\mathbf{D}_{\nu,q}}\subset \overline{\overline{\mathbf{D}_{\nu,q}}}$.
\begin{Proposition}\label{pp:deltastar}
 The operator $\der^*_\nu$ extends to a bounded operator
\[
\der^*_\nu:\overline{\mathbf{D}_{\nu,q}}\to \overline{\overline{\mathbf{D}_{\nu-1,q-1}}}\subset \mathcal{H}^{s-\nu, r-\nu}(\hat{V}_\nu).
\]
 The operator
\begin{equation}\label{eq:deltastar0}
\der^*_{\nu+1}\circ \der^\perp_{\nu}-\der^\perp_{\nu-1}\circ \der^*_{\nu}-(q+d^\perp-\nu)\cdot \mathrm{Id}:\mathbf{D}_{\nu,q}\to \mathbf{D}_{\nu,q}
\end{equation}
maps $\mathbf{D}_{\nu,q}$ into $\mathbf{D}_{\nu,q-1}\subset \mathbf{D}_{\nu,q}$ and extends to a bounded operator
\begin{equation}\label{eq:deltastar}
\der^*_{\nu+1}\circ \der^\perp_{\nu}-\der^\perp_{\nu-1}\circ \der^*_{\nu}-(q+d^\perp-\nu)\cdot \mathrm{Id}:\overline{\mathbf{D}_{\nu,q}}\to \overline{\overline{\mathbf{D}_{\nu,q}}}.
\end{equation}
\end{Proposition}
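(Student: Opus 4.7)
The plan is to realize $\der^*_\nu$ in each local trivialization as a (signed) interior product with a fiber-radial Euler vector field based at the attractor section $\iota$, and then establish the identity of the proposition by verifying a Poincar\'e-type homotopy formula on the generators of $\mathbf{D}_{\nu,q}$. In a local trivialization $\kappa:\pi_B^{-1}(U)\to U\times \disk$ with $\iota$ presented as the graph of $\iota_\kappa:U\to \disk$, consider the Euler-type vector field along the fibers
\[
Y_\kappa=\sum_{j=1}^{d^{\perp}}(y_j-\iota_\kappa(x)_j)\,\partial_{y_j},
\]
and define $\der^*_\nu$, up to the appropriate graded sign, as the interior product $\iota_{Y_\kappa}$. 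Using the distributional identity
\[
(y_j-\iota_\kappa(x)_j)\,\partial^\perp_\alpha \mu = -\alpha_j\,\partial^\perp_{\alpha-e_j}\mu,
\]
which holds because $\mu$ is supported on $\{y=\iota_\kappa(x)\}$, a generator $\varphi\,\partial^\perp_\alpha \mu\,\mathbf{e}_j\otimes dy_I$ of $\mathbf{D}_{\nu,q}$ is mapped to an explicit sum of generators lying in $\mathbf{D}_{\nu-1,q-1}$. Passing to a different chart alters $Y_\kappa$ by a vector field vanishing along $\mathcal{A}$, so the discrepancy on a generator of order $q$ lies in $\mathbf{D}_{\nu-1,q-2}$ and the definition is chart-independent modulo lower order.

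Boundedness is then established by the pseudo-differential calculus techniques used in the proofs of Propositions~\ref{pp:diff} and \ref{pp:diffop}. The operator $\der^*_\nu$ is a sum of algebraic contractions and multiplications by the coefficients $(y_j-\iota_\kappa(x)_j)$, which are $C^\infty$ along the fibers of $\pi_B$ and along unstable leafs (by Subsection~\ref{ss:remwu}) but only continuous in directions transverse to $\mathcal{F}_u$. This non-smoothness of $\iota$ in the transverse direction is exactly what forces the single-index loss of regularity in passing from $\overline{\mathbf{D}_{\nu,q}}\subset \mathcal{H}^{s-\nu,r-\nu}$ to $\overline{\overline{\mathbf{D}_{\nu-1,q-1}}}\subset \mathcal{H}^{s-\nu-1,r-\nu-1}$.

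The identity itself is a Cartan-type homotopy formula. On a generator $u=\varphi\,\partial^\perp_\alpha\mu\,\mathbf{e}_j\otimes dy_I$, the distribution-valued form $\partial^\perp_\alpha\mu\cdot dy_I$ is homogeneous under the $y$-dilation generated by $Y_\kappa$: the factor $dy_I$ has degree $\nu$ while $\partial^\perp_\alpha\mu$ has degree $-(|\alpha|+d^{\perp})$. Cartan's formula $\mathcal{L}_{Y_\kappa}=\der^\perp\iota_{Y_\kappa}+\iota_{Y_\kappa}\der^\perp$, combined with the appropriate graded sign in the definition of $\der^*_\nu$, then yields
\[
\big(\der^*_{\nu+1}\der^\perp_\nu-\der^\perp_{\nu-1}\der^*_\nu\big)(u)=(|\alpha|+d^{\perp}-\nu)\,u\quad\text{modulo terms in }\mathbf{D}_{\nu,q-1},
\]
where the remaining corrections come from $Y_\kappa\varphi$ and from Leibniz expansions of $\varphi$ around $y=\iota_\kappa(x)$ that collapse, via the identity $(y-\iota_\kappa(x))^\beta\,\partial^\perp_\alpha\mu=(-1)^{|\beta|}(\alpha!/(\alpha-\beta)!)\,\partial^\perp_{\alpha-\beta}\mu$, into a finite sum of strictly lower-order contributions. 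Since $(q+d^{\perp}-\nu)\,u-(|\alpha|+d^{\perp}-\nu)\,u=(q-|\alpha|)\,u\in \mathbf{D}_{\nu,q-1}$ whenever $|\alpha|<q$, the full operator of the proposition sends $\mathbf{D}_{\nu,q}$ into $\mathbf{D}_{\nu,q-1}$, and its bounded extension to the closures $\overline{\mathbf{D}_{\nu,q}}\to \overline{\overline{\mathbf{D}_{\nu,q}}}$ uses the same estimates as before.

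The main obstacle I anticipate is the bookkeeping in the last step: every error term produced by Cartan's formula acting on the coefficient $\varphi$, by the Leibniz rule for $\der^\perp$, and by chart-to-chart changes of $Y_\kappa$ must be absorbed into $\mathbf{D}_{\nu,q-1}$ rather than contaminate the top-order scalar. The crucial mechanism is that a formally infinite Taylor expansion of $\varphi$ around $y=\iota_\kappa(x)$, once paired with $\partial^\perp_\alpha\mu$, truncates to a finite sum of contributions of order strictly less than $q$. A secondary, more technical point is to pin down the exact sign convention for $\der^*_\nu$ that converts the anticommutator appearing in Cartan's formula into the signed combination $\der^*_{\nu+1}\der^\perp_\nu-\der^\perp_{\nu-1}\der^*_\nu$ written in the proposition.
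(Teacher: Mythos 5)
Your construction is essentially the one the paper uses: the model operator $\derd^*_\nu$ in \eqref{def:derd_fiber} is, up to sign, the interior product with the fiber-radial Euler field at the attractor section, Remark~\ref{rem:delta} even records its description as a star-conjugate of the exterior derivative, and Lemma~\ref{lem:exact} is the homotopy identity that your Cartan-formula computation produces. Your further remark — that a change of trivialization modifies $Y_\kappa$ by a field vanishing along $\mathcal{A}$, so that the discrepancy on a generator of order $q$ falls into $\mathbf{D}_{\nu-1,q-2}$ — is a more intrinsic way to say what the paper says less cleanly via the partition of unity and Remark~\ref{rem:derd_g_l}. The ``bookkeeping'' you flag as the main obstacle is precisely handled in the paper by working with the nested filtration $\mathbf{D}_{\nu,q}$ and the associated graded quotients, so there is no surprise lurking there.

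The boundedness paragraph, however, contains a real error, not just a technicality. You attribute boundedness to a ``single-index loss of regularity from $\mathcal{H}^{s-\nu,r-\nu}$ to $\mathcal{H}^{s-\nu-1,r-\nu-1}$'' caused by the transverse non-smoothness of $\iota$, and you invoke Propositions~\ref{pp:diff} and~\ref{pp:diffop}. Both points go wrong. First, by the paper's convention the target $\overline{\overline{\mathbf{D}_{\nu-1,q-1}}}$ is the closure in $\mathcal{H}^{s-(\nu-1)-1,\,r-(\nu-1)-1}=\mathcal{H}^{s-\nu,r-\nu}$, i.e.\ the \emph{same} Sobolev index as the source $\overline{\mathbf{D}_{\nu,q}}$: $\der^*_\nu$ must be shown bounded without any loss, so the mechanism of Proposition~\ref{pp:diffop} (which costs one unit of regularity) is in the wrong direction. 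Second, once the distributional identity $(y_j-\iota_\kappa(x)_j)\,\partial^\perp_\alpha\mu=-\alpha_j\,\partial^\perp_{\alpha-e_j}\mu$ is used, the multiplier $(y_j-\iota_\kappa(x)_j)$ has been absorbed completely into the lowering of the derivative order of the $\delta$-factor; the transverse non-smoothness of $\iota$ never appears as a coefficient and is not the obstacle. What is actually needed is the fact that the lowering operator $D_i:\varphi\,\partial^\perp_\alpha\mu\mapsto\varphi\,\partial^\perp_{\alpha-e_i}\mu$ is bounded on $\mathcal{H}^{s-\nu,r-\nu}$, which the paper proves by observing that $D_i$ is a primitive along the $y_i$-fiber, hence division by $\xi_i$ on the Fourier side, and this is bounded in a sufficiently negative-order space in the fiber directions. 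Replacing your appeal to Propositions~\ref{pp:diff}--\ref{pp:diffop} by this Fourier-side argument closes the gap, and with that change your proposal matches the paper's proof.
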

Below we prove Proposition \ref{pp:lf2} assuming the proposition above. Recall Proposition \ref{pp:dynfdspec} and the expression \eqref{eq:ds} of $d(s)$. 
Recall also that the zeros of the dynamical Fredholm determinant $d_\nu(s)$ in the half-plane $\mathrm{Re}(s)>-K$ coincide with the discrete eigenvalues of the generator $A_\nu$ of $\hat{L}^t_{\nu}$ in \eqref{eq:Ltnu} and the multiplicity of each of such zeros coincides with the algebraic multiplicity of the corresponding eigenvalue. For a complex number $\sigma\in \complex$ with $\mathrm{Re}(s)>-K$, let $\Sigma_\nu(\sigma)$ be the generalized eigenspace of the generator $A_\nu$ for the eigenvalue $\sigma$ if $\sigma$ is an eigenvalue and $\Sigma_\nu(\sigma)=0$ otherwise. 
For the proof of Proposition \ref{pp:lf2}, it is enough to show that, for such a $\sigma\in \complex$, 
\begin{enumerate}
\item The following sequence is exact if $\nu<d^\perp$:
\[
\begin{CD}
\Sigma_{\nu-1}(\sigma)@>{\der^\perp_{\nu-1}}>> \Sigma_{\nu}(\sigma)@>{\der^\perp_{\nu}}>> \Sigma_{\nu+1}(\sigma)
\end{CD}
\]
\item For the case $\nu=d^\perp$, we have 
\[
\Sigma_{d^\perp}(\sigma)=\left(\Sigma_{d^\perp}(\sigma)\cap \overline{\mathbf{D}_{d^\perp,0}}\right) \oplus \der^\perp_{d^\perp-1}(\Sigma_{d^\perp-1}(\sigma)).
\]
\end{enumerate}
For $0\le q\le q_0$ and $0\le \nu\le d^\perp$, 
let  $\Sigma_{\nu,q}(\sigma)$ be the generalized eigenspace for an eigenvalue $\sigma$ of the generator $A_{\nu,q}$ of the transfer operators 
\[
L^t_{\nu}:\overline{\mathbf{D}_{\nu,q}}/\overline{\mathbf{D}_{\nu,q-1}}\to \overline{\mathbf{D}_{\nu,q}}/\overline{\mathbf{D}_{\nu,q-1}}.
\]
We show that 
\begin{equation}\label{eq:exactseq}
\begin{CD}
\Sigma_{\nu-1,q-1}(\sigma)@>{\der^\perp_{\nu-1,q-1}}>> \Sigma_{\nu,q}(\sigma)@>{\der^\perp_{\nu,q}}>> \Sigma_{\nu+1,q+1}(\sigma)
\end{CD}
\end{equation}
is exact except for the case $(\nu,q)=(d^\perp,0)$, where 
\[
\der^\perp_{\nu,q}:\overline{\mathbf{D}_{\nu,q}}/\overline{\mathbf{D}_{\nu,q-1}}\to \overline{\mathbf{D}_{\nu+1,q+1}}/\overline{\mathbf{D}_{\nu+1,q}}
\]
denotes the operator that $\der^\perp_\nu$ induces. Then, since the transfer operator $\hat{L}^t_\nu$ preserves the nest of closed subspaces \eqref{eqnestsp2}, the claim (i) follows. It also follows $\Sigma_{d^\perp}(\sigma)=\Sigma_{d^\perp,0}(\sigma)$ and therefore so does the claim (ii).


We prove exactness of \eqref{eq:exactseq}. Note that we may suppose that the finite dimensional subspace $\Sigma_{\nu,q}(\sigma)$ is contained in the closure of $\mathbf{D}_{\nu,q}$ in the space $\mathcal{H}^{s-\nu+1,r-\nu+1}(\hat{V}_\nu)$ because the generalized eigenspace $\Sigma_{\nu}(\sigma)$ is intrinsic to the transfer operator $\hat{L}^t_\nu$. 
Suppose that $v\in \Sigma_{\nu,q}(\sigma)$ is represented by an element $\tilde{v}\in \Sigma_{\nu}(\sigma)\cap \overline{\mathbf{D}_{\nu,q}}$. We assume that $\der^\perp_{\nu,q} v=0$ or, in other words, that  $\der^\perp_{\nu} \tilde{v}\in \overline{\mathbf{D}_{\nu+1,q}}$. 
If we apply the operator \eqref{eq:deltastar} in Proposition \ref{pp:deltastar} to $\tilde{v}$, we see
\[
\der^{\perp}_{\nu-1}\circ \der^*_{\nu}\tilde{v}-((q+d^\perp-\nu)) \tilde{v}\in \overline{\mathbf{D}_{\nu,q-1}}.
\] 
Then, from commutativity of  the diagram \eqref{eq:comm}, the element
\[
w=[\der^*_{\nu}\tilde{v}]\in \overline{\mathbf{D}_{\nu-1,q-1}}/\overline{\mathbf{D}_{\nu-1,q-2}}
\]
satisfies  $\der^\perp_{\nu-1,q-1}(w)=v$ provided $q+d^\perp-\nu>0$. 
This finishes the proof.

\subsection{Proof of Proposition \ref{pp:deltastar}}
We define the operator $\der^*_\nu$ in \eqref{eq:cod0} and prove Proposition \ref{pp:deltastar}.
We first consider the exterior derivative operator on  $\real^{d^\perp}$ and an adjoint of it. Let us write $\delta$ for the Dirac delta function at the origin $0$ on $\real^{d^\perp}$. For any multi-index $\alpha=(\alpha_i)_{i=1}^{d^\perp}$ with $\alpha_i\in \mathbb{Z}_+$, we set
\[
\delta^{(\alpha)}=\left(\frac{\partial}{\partial x_1}\right)^{\alpha_1}
\left(\frac{\partial}{\partial x_2}\right)^{\alpha_2}\cdots \left(\frac{\partial}{\partial x_{d^\perp}}\right)^{\alpha_{d^\perp}} \, \delta.
\]
We recall the notation $dy_{I}$ in \eqref{eq:dyi} and regard it as a constant differential form on $\real^{d^\perp}$.  
For $0\le k\le d^\perp$ and $q\ge 0$, let $\Omega_{\nu,q}$ be the linear space spanned by the differential $k$-forms (with distributional coefficients) 
\begin{equation}\label{eq:omega}
\delta^{(\alpha)} dy_{I}
\end{equation}
for $I=\{i_1<i_2<\cdots<i_\nu\}$ with $|I|=\nu$ and $\alpha\in \mathbb{Z}_+^{d^\perp}$ with $|\alpha|=q$. We define the linear operators
\begin{equation}\label{def:derd_fiber}
\derd_{\nu}:\Omega_{\nu,q}\to \Omega_{\nu+1,q+1},\qquad \derd^*_\nu:\Omega_{\nu,q}\to \Omega_{\nu-1,q-1}
\end{equation}
respectively as the linear extension of the following operation: 
\begin{align*}
&\derd_{\nu}(\delta^{(\alpha)} dy_{I})= \sum_{i=1}^{d^\perp}\delta^{(\alpha+i)} \cdot dy_i \wedge dy_{I}= \sum_{i=1}^{d^\perp}(-1)^{\sharp(I,i)} \delta^{(\alpha+i)} \cdot dy_{I\cup \{i\}}
\intertext{
and}
&\derd^*_{\nu}(\delta^{(\alpha)} dy_{I})= \sum_{i=1}^{d^\perp}  (-1)^{\sharp(I,i)-1} \alpha_i \cdot  \delta^{(\alpha-i)} \cdot  dy_{I\setminus\{i\}}
\end{align*}
where we use shorthand notation 
\[
\alpha\pm i=(\alpha_1, \cdots, \alpha_i\pm 1, \cdots, \alpha_{d^\perp})
\]
and we understand that $\delta^{(\alpha-i)}=0$ (resp. $dy_{I\setminus\{i\}}=0$) if $\alpha_i=0$ (resp. $i\notin I$).

\begin{Remark}\label{rem:delta} The operator $\derd_\nu$ is just a natural extension of the exterior derivative operator to currents. The operator $\derd^*_{\nu}$ can be understood as a kind of adjoint of the operator $\derd_\nu$. Indeed we can express it as 
\[
\derd^*_\nu=(-1)^{|I|} \cdot *^{-1}\circ \derd_{d^\perp-\nu} \circ *
\]
for the isomorphism
\[
*:\Omega_{m,q}\to \Omega_{d^\perp-m,q}^\dag,\quad *(\delta^{(\alpha)} dy_{I})= (-1)^{\sharp I}y^\alpha dy_{I^c}
\]
where $I^c=\{1,2,\cdots, d^\perp\}\setminus I$, $y^{\alpha}=y^{\alpha_1}_1y^{\alpha_2}_2\cdots y^{\alpha_{d^{\perp}}}_{d^\perp}$ and $\Omega_{m,q}^\dag$ denotes the linear space spanned by the differential form on $\real^{d^\perp}$ of the form $y^{\alpha} dy_I$ with $|\alpha|=q$ and $|I|=\nu$. ($\sharp I$ is taken so that $dy_I\wedge dy_{I^c}=(-1)^{\sharp I} dy_{1}\wedge \cdots \wedge dy_{d^\perp}$.) 
\end{Remark} 
We obtain the next lemma by computation. 
\begin{Lemma}\label{lem:exact} We have
\[
\derd_{\nu+1}\circ \derd_\nu=0,\qquad \derd^*_{\nu-1}\circ \derd^*_\nu=0
\]
and also 
\[
(\derd_{\nu+1}^*\circ \derd_\nu+\derd_{\nu-1}\circ \derd_{\nu}^*)|_{\Omega_{\nu,q}}=-(q+{d^\perp}-\nu)\cdot \mathrm{Id}|_{\Omega_{\nu,q}}.
\]
\end{Lemma}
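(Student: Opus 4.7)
The three identities are purely algebraic identities about operators on the finite-dimensional subspaces $\Omega_{\nu,q}\subset\Omega_{\ast,\ast}$, and I would prove them by direct computation on the basis $\{\delta^{(\alpha)}dy_I\}$, exploiting the simple structure of $\derd_\nu$ and $\derd^*_\nu$.

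First I would tackle $\derd_{\nu+1}\circ\derd_\nu=0$ as a warm-up. Expanding $\derd_{\nu+1}\derd_\nu(\delta^{(\alpha)} dy_I)$ yields a double sum $\sum_{j}\sum_{i}(\pm)\delta^{(\alpha+i+j)}\,dy_j\wedge dy_i\wedge dy_I$; swapping the roles of $i$ and $j$ leaves the distributional factor $\delta^{(\alpha+i+j)}$ invariant (since partial derivatives commute) while changing the form factor by a sign (since $dy_j\wedge dy_i=-dy_i\wedge dy_j$), so the sum is its own negative and hence vanishes. The identity $\derd^*_{\nu-1}\circ\derd^*_{\nu}=0$ can be proved by the same symmetry argument (now using that $\alpha_i(\alpha-i)_j=\alpha_j(\alpha-j)_i$ when $i\neq j$, while the $i=j$ term does not contribute because $\iota$ squares to zero); alternatively one can deduce it from the first identity via the isomorphism $*$ described in Remark~\ref{rem:delta}.

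The heart of the lemma is the Weitzenb\"ock-type identity. Applied to a basis vector $\delta^{(\alpha)}dy_I\in\Omega_{\nu,q}$, the operator $\derd^*_{\nu+1}\derd_\nu$ produces a double sum indexed by $i\notin I$ and $j\in I\cup\{i\}$, while $\derd_{\nu-1}\derd^*_\nu$ produces a double sum indexed by $j\in I$ and $i\in(I\setminus\{j\})\cup\{1,\dots,d^{\perp}\}$. I would split each sum into a \emph{diagonal} part (where the indices cancel, returning to the original monomial $\delta^{(\alpha)}dy_I$) and an \emph{off-diagonal} part (where they do not). The diagonal contribution of $\derd^*_{\nu+1}\derd_\nu$ is $-\sum_{i\notin I}(\alpha_i+1)=-\sum_{i\notin I}\alpha_i-(d^{\perp}-\nu)$ (the sign arises because $(-1)^{\sharp(I,i)+\sharp(I\cup\{i\},i)-1}=-1$), and the diagonal contribution of $\derd_{\nu-1}\derd^*_\nu$ is $-\sum_{j\in I}\alpha_j$. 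Summing gives $-(|\alpha|+d^{\perp}-\nu)=-(q+d^{\perp}-\nu)$, as required.

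The only remaining point is to verify that the off-diagonal terms cancel. For fixed $i\notin I$ and $j\in I$, both operators produce a multiple of $\alpha_j\,\delta^{(\alpha-j+i)}dy_{(I\setminus\{j\})\cup\{i\}}$, and I would check by an explicit parity count that the two sign factors $(-1)^{\sharp(I,i)+\sharp(I\cup\{i\},j)-1}$ and $(-1)^{\sharp(I,j)-1+\sharp(I\setminus\{j\},i)}$ are opposite. This is the one step that is bookkeeping-heavy: I would argue it by comparing how many elements of $I$ lie between $i$ and $j$ in the natural ordering of $\{1,\dots,d^{\perp}\}$, and observing that the two sign expressions differ precisely by the single transposition that moves $i$ past $j$ in $I\cup\{i\}$. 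I expect the main obstacle to be purely clerical — keeping the sign conventions $\sharp(\cdot,\cdot)$ straight — but once this is checked the identity follows.
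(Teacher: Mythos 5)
Your computation is correct and is exactly the direct verification that the paper has in mind — the paper simply states "we obtain the next lemma by computation" and offers no further detail, so a basis-by-basis check on $\delta^{(\alpha)}dy_I$ is the intended route. Your diagonal bookkeeping $\bigl(-\sum_{i\notin I}(\alpha_i+1)-\sum_{j\in I}\alpha_j=-(q+d^{\perp}-\nu)\bigr)$ and the parity argument showing the off-diagonal terms in $\derd^*_{\nu+1}\derd_\nu$ and $\derd_{\nu-1}\derd^*_\nu$ cancel are both right.
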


Next we define the operators
\begin{equation}\label{def:derd_glob}
\derd_{\nu}:\mathbf{D}_{\nu,q}\to \mathbf{D}_{\nu+1,q+1},\qquad \derd^*_\nu:\mathbf{D}_{\nu,q}\to \mathbf{D}_{\nu-1,q-1}
\end{equation}
respectively as the application of the operators $\derd_{\nu}$ and $\derd_{\nu}^*$ in \eqref{def:derd_fiber} to each fiber $\pi_B^{-1}(x)$ with setting $\iota(x)$ as the origin. More precisely, we set up a finite system of local trivializations $\hat{\kappa}_j$, $1\le j\le J$, and introduce a $C^\infty$ partition of unity $\{\rho_j\}_{j\in J}$ on $B$ subordinate to the covering by the domain of the local trivializations $\hat{\kappa}_j$. We may and do assume that the functions $\rho_j$ are constant along the fibers of $B$. Then we set 
\[
\derd_{\nu}u=\sum_{j\in J} \derd_{\nu,j}(\rho_j u),\quad 
\derd_{\nu}^*u=\sum_{j\in J} \derd_{\nu,j}^*(\rho_j u)
\]
where $\derd_{\nu,j}$ and $\derd_{\nu,j}$ are the application of the operators in \eqref{def:derd_fiber} on each fiber  with setting $\iota(x)$ as the origin. 

\begin{Remark}\label{rem:derd_g_l}
The definition of the operator $\derd_{\nu}$ does not depend on the choice of local trivializations and make a  geometric sense, one because $\derd_{\nu}$ is the extension of the exterior derivative when we trivialize $\hat{V}$ using the flat connection $H$. But the operator $\derd_{\nu}^*$ will depend on the choice of local trivializations. 
\end{Remark} 

Since the claims of Lemma \ref{lem:exact} holds also for \eqref{def:derd_glob}, 
 Proposition \ref{pp:deltastar} follows if we prove the following lemma. 
\begin{Lemma} The latter operator $\derd_{\nu}^*$ in \eqref{def:derd_fiber} extends to a bounded operator 
\[
\derd_{\nu}^*:\overline{\mathbf{D}_{\nu,q}}\to \overline{\overline{\mathbf{D}_{\nu+1,q+1}}}.
\]
\end{Lemma}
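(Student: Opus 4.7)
The plan is to exhibit $\derd_\nu^*$ as a first-order differential operator with $C^\infty$ coefficients in local fiber trivializations and reduce the boundedness claim to a combination of Proposition~\ref{pp:diffop} with a combinatorial verification that $\derd_\nu^*$ respects the $\mathbf{D}_{\cdot,\cdot}$-filtration. Throughout, I work in the system of local trivializations $\hat{\kappa}_j$ together with the $C^\infty$ partition of unity $\{\rho_j\}$ from the definition of the globally defined $\derd_\nu^*$ in Remark~\ref{rem:derd_g_l}, and then patch.

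First, I would verify the algebraic inclusion. Taking $u \in \mathbf{D}_{\nu, q}$ expressed locally as in~\eqref{eq:localDq} and applying the explicit formula~\eqref{def:derd_fiber}, one sees that each summand of $\derd_\nu^* u$ either shifts the multi-index $\alpha$ by one unit (via the $\alpha_i \, \delta^{(\alpha - i)}$ factor) and removes an index from $I$, or, after the partition of unity and the transition between trivializations is taken into account, hits the base-direction coefficient $\varphi_{\alpha, j, I}$ by a smooth differentiation. In either case the resulting expression, combined with the Leibniz-type bookkeeping, can be rewritten in the local form~\eqref{eq:localDq} with the appropriate shifts in the indices $\nu$ and $q$; summing over $j$ places $\derd_\nu^* u$ in the claimed global space $\mathbf{D}_{\nu+1, q+1}$.

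Second, for the norm estimate, I would invoke Proposition~\ref{pp:diffop}: $\derd_\nu^*$ is a first-order differential operator with $C^\infty$ coefficients on the fiber, and it therefore extends to a bounded map
\[
\derd_\nu^* : \mathcal{H}^{s-\nu, r-\nu}(\hat{W}_\nu) \to \mathcal{H}^{s-\nu-1, r-\nu-1}(\hat{W}_{\nu+1}).
\]
Since $\overline{\mathbf{D}_{\nu, q}}$ is by definition the closure of $\mathbf{D}_{\nu, q}$ in $\mathcal{H}^{s-\nu, r-\nu}(\hat{W}_\nu)$ and $\overline{\overline{\mathbf{D}_{\nu+1, q+1}}}$ is the closure of $\mathbf{D}_{\nu+1, q+1}$ in the larger space $\mathcal{H}^{s-\nu-2, r-\nu-2}(\hat{W}_{\nu+1})$, the continuous embedding $\mathcal{H}^{s-\nu-1, r-\nu-1} \hookrightarrow \mathcal{H}^{s-\nu-2, r-\nu-2}$ together with the density of $\mathbf{D}_{\nu, q}$ in its closure delivers the desired bounded extension.

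The main obstacle I foresee is the trivialization dependence of $\derd_\nu^*$ flagged in Remark~\ref{rem:derd_g_l}: the commutators between $\derd_\nu^*$ and the transition functions between overlapping charts, as well as with the cutoffs $\rho_j$, must be shown to produce only zeroth- and lower-order algebraic corrections that remain in $\mathbf{D}_{\nu+1, q+1}$. This is essentially a bookkeeping task, but it is the step where one must be careful to avoid picking up terms outside the $\mathbf{D}$-filtration. Once this is checked, the lemma follows and, together with the fiberwise algebraic identity of Lemma~\ref{lem:exact}, it supplies both claims of Proposition~\ref{pp:deltastar}.
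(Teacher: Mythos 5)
Your argument fails at the decisive step, the boundedness estimate. You justify continuity by asserting that $\derd^*_\nu$ is a first-order differential operator with $C^\infty$ coefficients and then invoking Proposition \ref{pp:diffop}; but $\derd^*_\nu$ is not a differential operator. By its definition it \emph{lowers} the transversal order, sending $\varphi\cdot\partial^\perp_\alpha\mu\otimes dy_I$ to terms involving $\partial^\perp_{\alpha-i}\mu$ and $dy_{I\setminus\{i\}}$: on $\mathbf{D}_{\nu,q}$ it acts like an integration along the fibers (equivalently, on these distributions, like multiplication by the functions $y_i-\iota_i(x)$, whose coefficient involves the merely continuous invariant section $\iota$), so Proposition \ref{pp:diffop} is inapplicable on both counts — wrong type of operator and non-smooth data. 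The paper's proof exploits precisely this structure: in a local trivialization it suffices to bound the single operator $D_i:\varphi\,\delta^{(\alpha)}(y-\iota(x))\mapsto \varphi\,\delta^{(\alpha-i)}(y-\iota(x))$, which is the distributional extension of the fiberwise antiderivative $\varphi\mapsto\int_{-\infty}^{y_i}\varphi\,dy_i'$, i.e.\ division by the dual variable $\xi_i$ on the Fourier side, and this is bounded from $\mathcal{H}^{s-\nu,r-\nu}$ to \emph{itself} (no loss of regularity) once $s$ is sufficiently negative. The absence of loss is not optional: in Proposition \ref{pp:deltastar} the operator must map $\overline{\mathbf{D}_{\nu,q}}\subset\mathcal{H}^{s-\nu,r-\nu}(\hat{W}_\nu)$ into $\overline{\overline{\mathbf{D}_{\nu-1,q-1}}}$, whose closure is taken in $\mathcal{H}^{s-\nu,r-\nu}(\hat{W}_{\nu-1})$, the \emph{same} indices as the source; a bound losing one derivative, as a genuine first-order differential operator would give, lands only in $\mathcal{H}^{s-\nu-1,r-\nu-1}$ and cannot feed the homotopy argument used to prove Proposition \ref{pp:lf2}.

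A secondary inconsistency: $\derd^*_\nu$ decreases both indices ($\nu\to\nu-1$, $q\to q-1$), as your own first paragraph observes when you note that an index is removed from $I$ and $\alpha$ is shifted down; the target $\mathbf{D}_{\nu+1,q+1}$ in the statement is a misprint for $\mathbf{D}_{\nu-1,q-1}$ (compare \eqref{def:derd_glob} and Proposition \ref{pp:deltastar}), and by adopting it literally you compute $\overline{\overline{\mathbf{D}_{\nu+1,q+1}}}$ as a closure in $\mathcal{H}^{s-\nu-2,r-\nu-2}$, which makes your functional-analytic bookkeeping consistent only with the misprinted, weaker statement. Your first paragraph's filtration check (that $\derd^*_\nu$ maps $\mathbf{D}_{\nu,q}$ into the appropriate $\mathbf{D}$-space and that chart transitions and the cutoffs $\rho_j$ produce only admissible corrections) is fine and matches the paper's set-up, and the closing density argument is standard; but without the correct continuity mechanism — the antiderivative/division-by-$\xi_i$ estimate on the anisotropic space — the lemma is not proved.
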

\begin{proof}
From the definition of the operator $\derd_{\nu}^*$ using local trivialization, it is enough to show that the operator 
\[
D_i:\varphi(x,y) \delta^{\alpha}(y-\iota(x))\mapsto \varphi(x,y) \delta^{\alpha-i}(y-\iota(x)),\quad 1\le i\le d^\perp
\]
extends to a bounded operator from $\mathcal{H}^{s-\nu,r-\nu}(U\times \mathbb{D})$ to itself. But this is not difficult to check from the definition of the anisotropic Sobolev space $\mathcal{H}^{s-\nu,r-\nu}(U\times \mathbb{D})$ provided that $s$ is sufficiently small, because  $D_i$ is the extension of the operator
\[
{D}_i^{\circ}:\varphi(x,y_1,\cdots,y_i, \cdots, y_{d^\perp})\mapsto 
\int_{-\infty}^{y_i}\varphi(x,y_1,\cdots,y'_i, \cdots, y_{d^\perp}) dy'_i
\]
to the space of distributions and the latter\footnote{We actually have to restrict the domain of $D^{\circ}_i$ so that the functions and their image are both compactly supported.} corresponds to the division by the function $\xi_i$ (the dual variable of $y_i$) on the Fourier space. 
\end{proof}


\bibliography{mybib}
\bibliographystyle{amsplain} 

\end{document}